\numberwithin{equation}{section}
\newtheorem{Theorem}{Theorem}[section]
\newtheorem{Lemma}{Lemma}[section]
\newtheorem{definition}{Definition}[section]
\newtheorem{Coro}{Corollary}[section]
\newtheorem{prop}{Proposition}[section]
\newtheorem{Fact}{Fact}
\newtheorem{Rm}{Remark}[section]
\newcommand{\I}{\mathbb I}
\newcommand{\R}{\mathbb R}
\newcommand{\C}{\mathbb C}
\newcommand{\D}{\mathbb D}
\newcommand{\T}{\mathbb T}
\newcommand{\E}{\mathcal E}
\renewcommand{\L}{\mathcal L}
\newtheorem*{TA}{Theorem A}
\newtheorem*{TB}{Theorem B}
\newtheorem*{TC}{Theorem C}
\newtheorem*{keylemma}{Key Lemma}
\title[The full renormalization horseshoe for multimodal maps] {The full renormalization horseshoe for multimodal maps:the continuity of the anti-renormalization operator for multimodal maps}
\author{Yimin Wang}
\date{\today}
\address{Shanghai Center for Mathematical Sciences,
Fudan University, No. 2005 Songhu Road, Shanghai 200438, China}
\email{yiminwang16@fudan.edu.cn}
\begin{document}
\maketitle
\begin{abstract}
   In this paper, we consider the renormalization operator $\mathcal R$ for multimodal maps. We prove the renormalization operator $R$ is a self-homeomorphism on any totally $\mathcal R$-invariant set. As a corollary, we prove the existence of the full renormalization horseshoe for multimodal maps.
\end{abstract}
\bigskip
\section{Introduction}
Renormalization has been an important idea and tool in dynamical systems. Feigenbaum's renormalization conjecture says that a certain renormalization operator has a hyperbolic fixed point. In fact, the original case considered by Feigenbaum is the periodic doubling case \cite{feigenbaum1979universal}. And such a conjecture was also formulated by Coullet and Tresser independently from Feigenbaum. For the periodic doubling case, Lanford \cite{lanford1980remarks} proved the existence of the hyperbolic  fixed point with computer assistant ,  Sullivan  \cite{sullivan1988bounds}  and McMullen \cite{mcmullen1998rigidity} proved the uniqueness of the fixed point and the exponential contraction of $\mathcal R$. Finally, Lyubich \cite{lyubich1999feigenbaum} considered the renormalization operator $\mathcal R$ on the space $\mathcal{QG}$ of quadratic-like germs, he defined a complex structrue on $\mathcal{QG}$ and then proved the hyperbolicity of the renormalization horseshoe. In \cite{regularorstochastic},  Lyubich proved the set of infinitely renormalizable real polynomials has Lebesgue measure zero. It implies his famous result: a typical real polynomial is either regular or stochasitc. Avila and Lyubich\cite{Fullhorseshoeunimodal} generalize the result to analytic unimodal case by introducing a method of path holomorphic structure and cocycles. There are also parallel results about the renormalization conjecture for critical circle maps, see \cite{yampolsky1999complex,yampolsky2003hyperbolicity,yampolsky2003renormalization}.\par
In  \cite{smania2001renormalization}, Smania introduced  {multimodal maps of type $\bf N$} and proved that deep renormalizations of infinitely renormalizable multimodal maps are multimodal maps of type $\bf N$ for some positive integer ${\bf N}$. Let $\I=[-1,1]$.  A multimodal map $f:\I\to \I$ is called {\em a multimodal map of type $\bf N$}, if there exists unimodal maps $f_0,\cdots,f_{{\bf N}-1}$ with following properties:
\begin{enumerate}
\item $f_j:\I\to \I$ is a unimodal map fixing $-1$;
\item $f=f_{{\bf N}-1}\circ\cdots\circ f_0$;
\item $0$ is a quadratic critical point of $f_j$ such that $f_j(0)\ge 0$ and $f''_j(0)<0$.
\end{enumerate}
We will call $(f_0,f_1,\cdots,f_{{\bf N}-1})$ {\em a unimodal decomposition} of $f$. For convenience, we will also assume that $f$ is even, i.e. $f(x)=f(-x)$ for all $x\in \I$. Since we concern about the infinitely renormalizable case, such an assumption will not lose generality.\par
A multimodal map $f$ of type $\bf N$ is called {\em renormalizable} if there exists a periodic interval $J$ of  period $p$ of $f$ such that $f^p|_J$ is affinely conjugate to a multimodal map of type $\bf N$. There is a canonical way to normalize $f^p|_J$ to be a multimodal map of type $\bf N$, and we call the normalized map $\mathcal R f$ the renormalization of $f$ and the smallest integer $p$ is called {\em the renormalization period} of $f$.  We say $f$ is {\em infinitely renormalizable with bounded combinatorics} if $f$ is infinitely renormalizable and the renormalization period $p_k$ of $\mathcal R^k f$ is bounded.\par
Let $\mathcal I$ be the set of all the infinitely renormalizable real-analytic multimodal maps of type $\bf N$ equipped with the $C^3$-topology. Then the renormalization operator $\mathcal R$ for multimodal maps of  type $\bf N$ induces a dynamical system $\mathcal R:\mathcal I\to \mathcal I$.
In \cite{SmaniaPhase,smania2016solenoidal}, Smania  considered the sub-dynamical sysetem : $\mathcal R|_{\mathcal I_p}:\mathcal I_p\to \mathcal I_p$, where $\mathcal I_p$ is the set of infinitely renormalizable multimodal maps of type $\bf N$ with combinatorics bounded by $p$. He proved that the $\omega$-limit set $\Omega_p$ of the renormalization operator $\mathcal R|_{\mathcal I_p}$ is compact and $\mathcal R|_{\Omega_p}:\Omega_p\to \Omega_p$ is topologically conjugate to a full shift of finite elements. 

In this paper, we prove the renormalization operator of multimodal maps of type $\bf N$ has a full horseshoe:
\begin{TA}Let $\mathcal I$ be the set of all the infinitely renormalizable multimodal maps of type $\bf N$  and $\Sigma$ be the set of all the renormalization combinatorics for multimodal maps of type $\bf N$. Then there exists a precompact subset $\mathcal A\subset \mathcal I$ such that the restriction $\mathcal R|_{\mathcal A}$ of $\mathcal R$ is topologically conjugate to a two-sided  full shift  on $\Sigma^{\mathbb Z}$.
\end{TA}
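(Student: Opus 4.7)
The plan is to realize the full horseshoe as an inverse limit of the renormalization action. For each bi-infinite sequence $\omega=(\omega_n)_{n\in\mathbb Z}\in\Sigma^{\mathbb Z}$ I will construct a unique map $f_\omega\in\mathcal I$ whose $n$-th renormalization has combinatorics $\omega_n$ for every $n\in\mathbb Z$, and then set $\mathcal A:=\{f_\omega:\omega\in\Sigma^{\mathbb Z}\}$. The forward direction ($n\ge 0$) is automatic once $f_\omega\in\mathcal I$ is known to be infinitely renormalizable. All the substance is in the backward direction, which requires an \emph{anti-renormalization operator} and its continuity — precisely the technical package announced in the paper's title.

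First I would define, for each combinatorial type $\sigma\in\Sigma$, a partial inverse $\mathcal R_\sigma^{-1}$: given $g\in\mathcal I$, produce a multimodal map $f$ of type $\mathbf N$ with $\mathcal R f=g$ and first-step combinatorics $\sigma$. In analogy with the Avila--Lyubich path-holomorphic construction for the unimodal case, this would be carried out by gluing a ``pre-renormalization'' piece prescribed by $\sigma$ onto a complex extension of $g$, using Smania's complex a priori bounds to keep the output inside $\mathcal I$. The main analytic task at this stage is to show that $\mathcal R_\sigma^{-1}$ is continuous in $g$ and that its image lies in a precompact subset of $\mathcal I$, uniformly over $\sigma$ in the sense needed to build $\mathcal A$.

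To build $f_\omega$, fix a basepoint $f^\star\in\Omega_p$ inside Smania's compact $\omega$-limit set for some large $p$. For each $\omega$, realize the forward tail $\omega_0\omega_1\cdots$ as a point $f_\omega^+$ via Smania's conjugacy on $\Omega_p$ (using a diagonal argument over $p$ to handle unbounded combinatorics), then form
\[
g_n(\omega):=\mathcal R_{\omega_{-1}}^{-1}\circ\mathcal R_{\omega_{-2}}^{-1}\circ\cdots\circ\mathcal R_{\omega_{-n}}^{-1}(f_\omega^+).
\]
Hyperbolicity of the renormalization horseshoe — inherited from the unimodal factors and from the self-homeomorphism statement of the abstract — yields exponential contraction of each $\mathcal R_\sigma^{-1}$ along the appropriate stable direction, so $g_n(\omega)$ converges to a limit $f_\omega$. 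The map $h(\omega):=f_\omega$ then tautologically intertwines the shift $S$ on $\Sigma^{\mathbb Z}$ with $\mathcal R$ on $\mathcal A$; continuity of $h$ follows from the exponential convergence together with continuity of each $\mathcal R_\sigma^{-1}$; injectivity follows from the self-homeomorphism statement applied to $\mathcal A$, which is totally $\mathcal R$-invariant by construction; and precompactness of $\mathcal A$ is built into the output of the anti-renormalization construction.

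The main obstacle I anticipate is the continuity of the anti-renormalization operator in the multimodal setting. In the unimodal case this reduces to a single quadratic-like extension, whereas for type-$\mathbf N$ maps one must respect the decomposition $f=f_{\mathbf N-1}\circ\cdots\circ f_0$, and perturbing $g=\mathcal R f$ can a priori cause distinct unimodal factors to shift combinatorially in incompatible ways. Controlling this, so that the preimage in $\mathcal I$ depends continuously on both $g$ and $\sigma$ uniformly enough to give precompactness, is the technical crux of the paper; once it is in hand, Theorem A is a formal consequence of the inverse-limit construction above together with the self-homeomorphism statement quoted in the abstract.
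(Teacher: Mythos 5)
Your construction runs into a conceptual obstruction before the analysis even starts: the operator $\mathcal R^{-1}_\sigma$ with \emph{freely prescribed} first-step combinatorics $\sigma$ does not exist. Smania proved $\mathcal R$ is injective on multimodal maps of type $\mathbf N$, so a given $g\in\mathcal I$ either has no $\mathcal R$-preimage at all (most $g$ are not anti-renormalizable), or it has exactly one, whose combinatorics is then determined by $g$ rather than being a free parameter. The surgery you sketch — ``gluing a pre-renormalization piece prescribed by $\sigma$ onto a complex extension of $g$'' — cannot simultaneously produce a genuine type-$\mathbf N$ map and land in a hybrid class for which $\mathcal R$ returns $g$ with the prescribed $\sigma$; there is no degree of freedom left once the hybrid class of $g$ is fixed. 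The paper sidesteps this entirely: for a tail $(\mathcal M_k)_{k\ge k_0}$ with $k_0\ll 0$, Kozlovski--Shen--van Strien rigidity gives the unique real polynomial $P_{k_0}$ realizing it, so the ``anti-renormalizations'' of the relevant depths are simply $\mathcal R^{j}P_{k_0}$ for $0\le j\le -k_0$; they exist by fiat. The beau bounds keep every $\mathcal R^{j}P_{k_0}$ in the compact set $\mathcal C_{\mathbf N}(\epsilon_0)$, and a diagonal extraction as $k_0\to-\infty$ produces the bi-infinite orbit.

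The convergence claim for $g_n(\omega)$ is also wrong in direction. What the paper proves (following Avila--Lyubich) is exponential contraction of $\mathcal R$ along real-symmetric hybrid leaves; consequently $\mathcal R^{-1}$ \emph{expands} there, and iterated anti-renormalization of a fixed starting point has no reason to converge. No hyperbolicity of the multimodal horseshoe is available or invoked in the paper — the ``self-homeomorphism'' in the abstract is a corollary of Theorem~C (continuity of $\mathcal R^{-1}$ on totally invariant precompact sets, via the Key Lemma), which is a much weaker topological statement. Uniqueness of the limit $f_0$ in the paper is instead deduced from the contraction of $\mathcal R$: two bi-infinitely renormalizable maps with the same bi-infinite combinatorics and moduli bounded below lie, by combinatorial rigidity, in the same hybrid leaf at every level, so $\mathrm d_{\mathcal H_{\mathbf b_k}}(\mathcal R^k f_0,\mathcal R^k\tilde f_0)\le C\lambda^l$ for all $l$, forcing equality. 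Finally, the continuity of $h^{-1}$ — the step that upgrades a semi-conjugacy to a conjugacy — is precisely the content of Theorem~C and cannot be folded into the inverse-limit construction: it is what the Key Lemma's dichotomy (polynomial degeneration versus bounded real trace when the periods blow up) is there to prove.
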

See section~3 for a definition of the renormalization combinatorics.\par
To prove the full renormalization horseshoe for multimodal maps of type $\bf N$, there are two main difficulties. One is to prove infinitely renormalizable multimodal maps has complex bounds, which has been done by Shen\cite{C2Density}. Since such  complex bounds has been built, we can modify  the argument of Avila-Lyubich\cite{Fullhorseshoeunimodal} to get a semi-conjugacy desired in Thereom~A:
\begin{TB}Let $\mathcal I$ and $\Sigma$ be as in the assumptions of Theorem~A. Then there exists a precompact subset $\mathcal A\subset \mathcal I$ such that $\mathcal R(\mathcal A)=\mathcal A$ and a continuous bijection $h$ which gives a topological semi-conjugacy between $\mathcal R|_{\mathcal A}$ and  a two-sided  full shift  on $\Sigma^{\mathbb Z}$.
\end{TB}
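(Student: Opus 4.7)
The plan is to derive Theorem~B from two ingredients: Smania's one-sided topological conjugacy on the forward attractor of $\mathcal R|_{\mathcal I_p}$, and the paper's main theorem (announced in the abstract) that $\mathcal R$ is a self-homeomorphism on any totally $\mathcal R$-invariant subset. Shen's complex bounds from \cite{C2Density} provide the precompactness of the relevant invariant sets in $C^3$, so that the framework of Avila--Lyubich \cite{Fullhorseshoeunimodal} can be lifted to the multimodal setting.

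For each combinatorics bound $p$, Smania's theorem provides a compact forward attractor $\Omega_p$ on which $\mathcal R$ is topologically conjugate to the one-sided full shift on $\Sigma_p^{\mathbb N}$. Set
\[
\mathcal A_p \;=\; \bigl\{\,f\in \Omega_p : \text{there is a full orbit } (f_n)_{n\in\mathbb Z}\subset \Omega_p \text{ with } \mathcal R(f_n)=f_{n+1},\ f_0=f\,\bigr\}
\]
and $\mathcal A=\bigcup_{p\ge 1}\mathcal A_p$. By construction $\mathcal R(\mathcal A)=\mathcal A$, and precompactness of $\mathcal A$ follows from Shen's complex bounds. Each $\mathcal A_p$ is totally $\mathcal R$-invariant by definition, so the main theorem implies that $\mathcal R|_{\mathcal A_p}$ is a homeomorphism onto itself. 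I then define $h\colon \Sigma^{\mathbb Z}\to \mathcal A$ by letting $h(\underline{\sigma})$ be the unique $f\in \mathcal A$ such that $\mathcal R^n(f)$ has renormalization combinatorics $\sigma_n$ for every $n\in \mathbb Z$: existence comes from Smania's theorem applied forward and from iterated anti-renormalization backward; uniqueness follows from Smania's forward rigidity on $\Omega_p$ together with the bijectivity of $\mathcal R|_{\mathcal A_p}$. The semi-conjugacy identity $h\circ \mathrm{shift}=\mathcal R\circ h$ is immediate, and continuity of $h$ splits into continuity of the forward coding (Smania's one-sided conjugacy is a homeomorphism, from complex bounds plus rigidity) and continuity of the backward inverse branches (which is precisely the main theorem).

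The principal obstacle is therefore entirely packaged into the main theorem: the continuity of the anti-renormalization operator for multimodal maps. In the unimodal case of Avila--Lyubich this is proved via a path-holomorphic structure on the space $\mathcal{QG}$ of quadratic-like germs. In the multimodal case one must work instead with a space of ${\bf N}$-tuples of unimodal germs reflecting the decomposition $f=f_{{\bf N}-1}\circ\cdots\circ f_0$, redevelop the path-holomorphic machinery in that space, and show that the inverse branches of $\mathcal R$ along each fixed combinatorics $\sigma\in \Sigma$ depend continuously on the base map while respecting the factorization structure. Once this is in hand, Theorem~B follows by formally combining Smania's conjugacy with the construction above.
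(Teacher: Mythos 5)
Your construction of $\mathcal A$ as $\bigcup_{p\ge 1}\mathcal A_p$ with $\mathcal A_p\subset\Omega_p$ has a fundamental gap: each $\Omega_p$ contains only maps whose entire full orbit of renormalizations has combinatorics bounded by $p$, so the union $\bigcup_p\mathcal A_p$ captures exactly the maps with \emph{bounded} combinatorics. But $\Sigma$ is an infinite set, and $\Sigma^{\mathbb Z}$ contains sequences $(\sigma_n)$ for which the renormalization periods are unbounded; such a sequence is in no $\Sigma_p^{\mathbb Z}$, hence your $h$ cannot be a bijection onto $\Sigma^{\mathbb Z}$. The same difficulty sinks the uniqueness argument: ``Smania's forward rigidity on $\Omega_p$'' presupposes that the two candidate maps lie in a common $\Omega_p$, which need not happen when the combinatorics sequence is unbounded. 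The paper instead constructs $h(\underline{\mathcal M})$ for an arbitrary $\underline{\mathcal M}\in\Sigma^{\mathbb Z}$ by a diagonal limiting argument over the polynomials $P_{k_0}$ realizing the truncated combinatorics $(\mathcal M_k)_{k\ge k_0}$, using the beau complex bounds (Theorem~\ref{shen1}) to get precompactness of $\{\mathcal R^{l-k_0}P_{k_0}\}$ inside $\mathcal C_{\bf N}(\epsilon_0)$ uniformly in the period; uniqueness is then proved by combinatorial rigidity together with the Avila--Lyubich-style exponential contraction along real-symmetric hybrid leaves (Theorem~\ref{contraction}), which requires none of Smania's bounded-combinatorics shift conjugacy.

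You have also misplaced where the machinery enters. The continuity of the anti-renormalization operator (Theorem~C, deduced from the Key Lemma via real-bounds techniques of Section~2, not path-holomorphic structures) is \emph{not} used in the paper's proof of Theorem~B at all; it is the extra ingredient that upgrades Theorem~B to Theorem~A by showing $h^{-1}$ is continuous. Conversely, the path-holomorphic contraction developed in Sections~4--5 is precisely what the proof of Theorem~B needs for well-definedness of $h$, and your proposal offers no substitute for that in the unbounded-combinatorics regime. In short: your ``package everything into the main theorem'' plan reverses the actual logical dependency, and the resulting $\mathcal A$ is too small to support the claimed bijection.
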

Another difficuliy is to prove the inverse $h^{-1}$ of the semi-conjugacy in Theorem~B is continuous. For the uniformly bounded combinatorics case, the proof is easy. However, it is not trivial to deal with the unbounded combinatorics case. To this end, we prove the following dichotomy:

\begin{keylemma}Let $\{f_k\}$ be a sequence of bi-infinitely renormalizable multimodal maps of type $\bf N$ which is precompact under $C^3$-topology. If the renormalization periods $p_k$ of $f_k$ tend to infinity, then each limit of $\mathcal Rf_k$ is either a polynomial of degree $2^n$ or with bounded real trace. 
\end{keylemma}
It is worth mentioning that Avila-Lyubich proved this theorem in the unimodal case\cite{Fullhorseshoeunimodal}, which inspired us so much. Even in that case, the proof is nontrivial and complicated.\par
As a corollary of the Key Lemma, we prove
\begin{TC} For any totally $R$-invariant precompact subset $\mathcal A' \subset \mathcal I$, the restriction $\mathcal R^{-1}|_{\mathcal A'}$ of the anti-renormalization operator $\mathcal R^{-1}$ to $\mathcal A'$ is continuous.
\end{TC}
Let us now describe the organization of the paper.\par
The proof of Theorem~A will be postponed to section~5.  In section~2, we recall some background of real box maps and use the distortion results for real box maps to prove some compactness lemmas which are the indispensable tools in the proof of the Key Lemma. We recall the definition of renormalization combinatorics in section~3 and then prove Theorem~C. From section~4 to the end of this paper, we will use the idea of path holomorphic space together with Theorem~C, following Avila-Lyubich, to show the existence of the full renormalization horseshoe for multimodal maps of type $\bf N$. We study the complexification of the multimodal maps of type $\bf N$ and its renormalization operator  in section~4, the external and inner structure will be discussed there. The  path holomorphic structure on each hybrid leaf will be defined in section~5 and we modify the argument of Avila and Lyubich \cite{Fullhorseshoeunimodal} to show that the renormalization operator contracts exponentially fast along the real-symmetric hybrid leafs by virtue of the complex bounds.

\subsection*{Acknowledgement}
The author would like to thank his supervisor  Weixiao Shen for advice and helpful discussions on this problem.

\section{Renormalization Operator and infinitely renormalizable maps}
In this section, we will first introduce the definition of renormalization of multimodal maps of type $\bf N$ and recall some results about the real bounds for real box maps. Then we will prove the Main Theorem:
\begin{keylemma}Let $\{f_k\}$ be a sequence of bi-infinitely renormalizable multimodal maps of type $\bf N$ which is precompact under $C^3$-topology. If the renormalization periods $p_k$ of $f_k$ tend to infinity, then each limit of $\mathcal Rf_k$ is either a polynomial of degree $2^n$ or with bounded real trace. 
\end{keylemma}
\subsection{The extended maps and renormalization}
Fix a multimodal map $f$ of type $\bf N$ and a unimodal decomposition $(f_0,f_1,\cdots,f_{{\bf N}-1})$ of $f$. Let $\I_{\bf N}=\{(x,j)\mid x\in \I,~0\le j<{\bf N}\}$,  following Smania \cite{SmaniaPhase}, we define the extended map $F$ of $f$:
\begin{align*}
F:\Bbb I_{\bf N} & \longrightarrow  \I_{\bf N}\\
(x,j) & \longmapsto (f_j(x), j+1 \mod {\bf N}).
\end{align*}
Clearly, the extended map of $f$ is not unique since $f$ can have several unimodal decompositions. The extended map is a real box map.
\begin{definition}\label{def of re}A closed interval $J \ni 0$ is called a $k$-periodic interval of an extended map $F$ if it satisfies:
\begin{enumerate}
\item $F^{k}(J\times \{0\}) \subset J\times\{0\}$,
\item $J\times \{0\}, F(J\times \{0\}),\cdots, F^{k-1}(J\times \{0\})$ are closed intervals with disjoint interiors,
\item for every $1\le j\le {\bf N}-1$, there exists exactly one $1\le m<k$ such that $0\times\{j\} \in F^m(J\times \{0\})$,
\item $k>{\bf N}$.
\end{enumerate} 
Let $p=k/{\bf N}$, we also say $J$ is a $p$-periodic interval of $f$. 
\end{definition}
Let $F$ be an extended map of a multimodal map $f$ of type $\bf N$. Consider a maximal $k$-periodic interval $J$ of $F$, i.e., $F^{k}(\partial J\times\{0\})\subset \partial J\times\{0\}$ (If $F$ has a $k$-periodic interval, then it must have a maximal $k$-periodic interval). Then there exists a canonical affine transformation $A_0: J \to \I$   such that $A_0(0)=0$ and $A_0\circ f^{k/{\bf N}}\circ A_0^{-1}:\I \to \I$ is a multimodal map of type $\bf N$. To see this, for every $0\le j <{\bf N}$, let $m_j$ be the integer such that $0\times \{j\} \in F^{m_j}(J\times \{0\})$ and $J_j$ be the interval such that $J_j\times\{j\}$ is the symmetrization of $F^{m_j}(J\times \{0\})$ with respect to $(0,j)$. There is a periodic point $z_j$ of $f$ on the boundary of $J_j$, let $A_j:J_j\to \I$ be the affine transformation such that $A_j(0)=0$, $A_j(z_j)=-1$ and $\tilde A_j: J_j\times\{j\} \to \I$, $\tilde A_j(x,j)=(A_j(x),j)$ for all $0\le j<{\bf N}$. For convenience, we make a convention that $m_n=m_0=0$, $J_{\bf N}=J_0$, $A_{\bf N}=A_0$, $\tilde A_{\bf N}=\tilde A_0$. Then for every $0\le j\le {\bf N}-1$, $F_j:=\tilde A_{j+1}\circ F^{m_{j+1}-m_j}\circ \tilde A^{-1}_j:\I\times \{j\} \to \I\times \{j+1 \mod {\bf N}\} $ is a unimodal map with critical point $0$ and $\tilde A_0\circ F^k\circ \tilde A^{-1}_0=F_{{\bf N}-1}\circ\cdots\circ F_1\circ F_0$, it implies $A_0\circ f^{k/{\bf N}}\circ A_0^{-1}:\I\to \I$ is also a multimodal map of type $\bf N$. If $f$ does not have a periodic interval with period strictly smaller that $k/{\bf N}$, then we call $A_0\circ f^{k/{\bf N}}\circ A_0^{-1}$ is the {\it real renormalization} of $f$ and denote it by $\mathcal R f$. Clearly, the renormalization of $f$ does not depend on the unimodal decomposition of $f$.  If $\mathcal R f$ is again renormalizable, then we will say $f$ is {\em  twice renormalizable}. If this procedure can be done infinitely many times, then $f$ will be called {\it infinitely renormalizable}. In this paper, we mainly concern about the infinitely renormalizable maps. \par
\begin{definition}A multimodal map $f$ of type $\bf N$ is called anti-renormalizable, if there exists a renormalizable multimodal map $g$ of type $\bf N$ such that $\mathcal Rg=f$. 
\end{definition}
Since Smania had proved the renormalization operator $\mathcal R$ is an injection  \cite [Proposition 2.2]{smania2016solenoidal}, the anti-renormalization operator is also well-defined. Similar to infinitely renormalizable maps, we can define infinitely anti-renormalizable maps, and a multimodal map is called {\it bi-infinitely renormalizable} if it is both infinitely renormalizable and anti-infinitely renormalizable.\par

\subsection{Background in real box maps}
Throughout Section~2, we will assume $f$ is a bi-infinitely renormalizable multimodal map of type $\bf N$ with renormalization period $p>2$ and fix an extended map $F$ of $f$. Set $ {\bf c_j}=(0,j)\in \I\times\{j\}$ for all $0\le j\le {\bf N}-1$. Assume $(\alpha,0)$ is the $F^n$- fixed point closest to $\bf c_0$ and $(-\alpha,0)$ is the reflection of $(\alpha,0)$ with respect to $\bf c_0$. Set
\[I_0:=(\alpha,-\alpha)\times\{0\}.\]
Let $(\beta,0)$ be the preimage of $F^{-n}(\alpha,0)$ closest to the point $(-1,0)$ and define a set 
$$\mathfrak E_0:=\I\times \{0\} \setminus\{(\beta,0),(-\beta,0),(\alpha,0),(-\alpha,0)\}.$$
\begin{definition}An open subset $B$ of $\mathbb I_{\bf N}$ is called nice if $\bigcup\limits_{k\ge 1}F^k(\partial B) \cap B=\emptyset$.
\end{definition}
The concept of nice interval was first introduced by Martens \cite{martens1994distortion}. For a nice symmetric interval $B$, we denote $D_B$ the first entry domain of $B$ under the iterates of $F$,  that is, 
\[D_B=\{x\in \I_{\bf N}\mid F^k(x) \in B~\text{for  some  integer}~k\ge1\}.\] 
For any $x\in D_B$, the minimal positive integer $k=k(x)$ such that $F^k(x)\in B$ is called  {\it the first entry time} of $x$. The {\it first entry map} to $B$ is defined as:
\begin{align*}
R_B:D_B & \longrightarrow  B\\
x & \longmapsto F^{k(x)}(x).
\end{align*}
The restriction of $R_B$ to $D_B\cap B$ is called {\it the first return map}. For $x\in D_B$, we shall use $\mathcal L_x(B)$ to denote the connected component of $D_B$ containing $x$. Let $\mathcal L^1_x(B)=\mathcal L_x(B)$, and for any positive integer $j\ge 2$, let $\mathcal L^{j}_{x}(B)=\mathcal L_{x}(\mathcal L^{j-1}_x(B))$, whenever it makes sense.\par
For a symmetric nice interval ${\bf c}\in I \subset \I_{\bf N}$,  the {\it scaling factor} of $I$ is defined as: 
\[\lambda_I:=\frac{|I|}{|\L_{\bf c}(I)|}.\]

For a nice open set $K\cap \omega({\bf c})\ne \emptyset$ (where $ \omega({\bf c})$ is the $\omega$-limit set of $\bf c$ ), let $\mathcal M(K)$ be the collection of intervals which are pullbacks of components of $K$. Shen\cite{C2Density} defined {\it the limit scaling factor} of $K$ as:
\[\Lambda_K:=\sup\limits_I \lambda_I,\]
where the supremum is taken over all symmetric nice intervals in $\mathcal M(K)$.\par
We say $B_0\Supset B_1\Supset B_2\Supset\cdots$ is a {\em nest} if  there exists $x\in B_0$ such that $B_{n+1}=\mathcal L_x(B_n)$ for all $n\in \mathbb N$.\par
A sequence of nice symmetric intervals $B_1\Supset B_2\Supset\cdots\Supset B_L\ni {\bf c}$ is called {\it a central cascade} with respect to $\bf c$, if $B_{j+1}=\mathcal L_{\bf c}(B_j)$ for all $1\le j\le L-1$ and $R_{B_j}({\bf c})\in B_{j+1}$ for each $1\le j\le L-2$.  Such a central cascade is called maximal if $R_{B_{L-1}}$ displays a non-central return, i.e., $R_{B_{L-1}} ({\bf c})\notin B_L$.  We say that the central cascade is of {\it saddle node} type
if $R_{B_1}|_{B_2}$ has all the critical points in $B_L$,  and does not have a fixed point.
\begin{Rm}If $B_1\Supset B_2\Supset\cdots\Supset B_L$ is a  maximal central cascade, then $B_j\Supset B_{j+1}\Supset\cdots\Supset B_L$ is also a maximal central cascade for all $2\le j\le L-1$.
\end{Rm}
By  a {\em chain} we mean a sequence of  open intervals $\{G_s\}^k_{s=0}$ such that  $G_{s+1}$ is a component of $F^{-1}(G_s)$  for every $0\le s\le k-1$.  The order of the chain is the number of the integers $s$ with $0\le s< n$ such that $G_s$ intersects $\mathrm{Crit}(F)$ and the intersection multiplicity is the maximal number of the intervals $G_s$ ($0 \le s \le n$)  which have a non-empty intersection.\par

We shall need the following known results:
 \begin{Theorem}[{\cite[Theorem A]{SE}}]\label{realbounds}There exists $1<\lambda = \lambda(\|F\|_{C^3})$ with the following property. Let us consider a nest $B_0\Supset B_1\Supset B_2\Supset\cdots$.  If $R_{B_k}$
does not display a central return, then
\[\lambda_{B_{k+1}} =\frac{|B_{k+1}|}{|B_{k+2}|}>\lambda.\]
\end{Theorem}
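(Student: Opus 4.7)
The plan is to convert the non-central return hypothesis into a definite Koebe collar, and then transfer that collar into a definite scaling ratio $|B_{k+1}|/|B_{k+2}|>\lambda$ via the $C^3$ cross-ratio/Koebe principle. This is the classical ``real bounds'' strategy going back to Martens and refined by Kozlovski, Shen and van Strien; the only point specific to the multimodal setting is bookkeeping about which unimodal branches are used.

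First, I would unpack the geometry. Because $B_k$ is a nice symmetric interval and $B_{k+1}=\L_{\bf c}(B_k)$, the first return branch $R_{B_k}|_{B_{k+1}}:B_{k+1}\to B_k$ is a composition of a unimodal critical fold at $\bf c$ with a diffeomorphic iterate of $F$. The hypothesis that $R_{B_k}$ is \emph{non-central} says $R_{B_k}({\bf c})\in B_k\setminus B_{k+1}$. In particular the critical value sits at a definite (relative to $|B_k|$) distance from $\partial B_{k+1}$, so the annular region $B_k\setminus B_{k+1}$ can later play the role of a Koebe collar around the relevant post-critical point.

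Next, I would describe $B_{k+2}$ as a pullback. By definition $B_{k+2}=\L_{\bf c}(B_{k+1})$, so there is a smallest $m\ge 1$ with $R_{B_k}^m({\bf c})\in B_{k+1}$, and $B_{k+2}$ is obtained by pulling $B_{k+1}$ back along the $R_{B_k}$-orbit of $\bf c$; equivalently, it is a pullback of $B_{k+1}$ under an iterate of $F$. The niceness of $B_k$ and $B_{k+1}$ guarantees that this pullback chain is diffeomorphic off the unique initial critical fold at $\bf c$, and that the corresponding chain for $B_k$ yields a larger interval $\widetilde B\supset B_{k+2}$ contained in $B_{k+1}$ on which the iterate is the composition of the same critical fold with a diffeomorphism onto $B_k$. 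Then I would apply the $C^3$ Koebe principle: the Koebe collar $B_k\setminus B_{k+1}$ pulls back (after the critical fold) to a collar $\widetilde B\setminus B_{k+2}$ around $B_{k+2}$ whose relative size is bounded below by a constant $\kappa=\kappa(\|F\|_{C^3})>0$. Pushing through the quadratic-type critical fold at $\bf c$ costs only a square root, still giving a definite ratio, and yielding $|B_{k+1}|/|B_{k+2}|\ge 1+\lambda'$ with $\lambda'=\lambda'(\|F\|_{C^3})>0$.

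The main obstacle I expect is Step two and three in the multimodal setting: bounding the order and intersection multiplicity of the pullback chain uniformly in $k$ so that the Koebe/cross-ratio inequalities apply with constants depending only on $\|F\|_{C^3}$. I would deal with this by invoking Smania's chain decomposition for extended maps, showing that a chain along a first-return pullback to a nice set has order bounded in terms of the number of critical points $\bf N$ alone, together with the fact that non-central returns prevent the chain from wrapping repeatedly around $\bf c$. Once the order is controlled, the cross-ratio inequality (driven by $\|F\|_{C^3}$) converts the definite collar from the non-central gap into the quantitative bound $\lambda_{B_{k+1}}>\lambda$ asserted in the theorem.
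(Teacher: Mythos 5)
The paper does not prove this result at all --- it is cited verbatim from Shen (the reference \texttt{SE}), with the immediate remark that the unimodal case goes back to Martens. So there is no internal proof to compare against; I evaluate your sketch on its own.

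There is a genuine gap at the very first step. From the non-central hypothesis $R_{B_k}({\bf c})\in B_k\setminus B_{k+1}$ you infer that the critical value lies at a definite (relative to $|B_k|$) distance from $\partial B_{k+1}$, and then treat $B_k\setminus B_{k+1}$ as a Koebe collar. Neither conclusion follows. Non-centrality only says the critical value lands \emph{somewhere} outside $B_{k+1}$; it can be arbitrarily close to $\partial B_{k+1}$. More seriously, the annulus $B_k\setminus B_{k+1}$ can a priori be arbitrarily thin: the construction $B_{k+1}=\mathcal L_{\bf c}(B_k)$ gives no lower bound whatsoever on $|B_k|/|B_{k+1}|$. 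In fact, asserting that $B_k\setminus B_{k+1}$ is a definite collar is precisely the theorem's own conclusion at the previous index (namely $\lambda_{B_k}>\lambda>1$), which is not among the hypotheses --- nothing is assumed about $R_{B_{k-1}}$ --- so invoking it makes the argument circular. Once that collar is presupposed, the remaining steps (pull the collar back along the chain, lose a square root at the fold) are standard and sound, but the engine driving them is exactly the thing that has to be proved.

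The substantive content of Martens' and Shen's real-bounds proof is manufacturing a definite collar from a bare non-central return with no collar in hand. This is done through the $C^3$ cross-ratio machinery together with a ``smallest-interval'' argument along a pullback chain: one locates an interval in the chain that is smallest relative to its ambient gap and applies the cross-ratio distortion inequality there to force a definite amount of expansion, which then propagates by Koebe to the interval one actually cares about. Your closing paragraph flags bounded chain order/multiplicity in the multimodal setting as the main obstacle; that is a real but secondary bookkeeping matter, handled by Shen's chain lemmas (cf.\ the distortion lemma quoted in Section~2 of the paper). The step you assumed away in the first paragraph --- where the definite collar comes from --- is the actual crux, and a correct proof would have to replace that assumption with the smallest-interval/cross-ratio argument.
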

Real bounds for $S$-unimodal maps were proved earlier by Martens \cite{martens1994distortion}.\par
We say an open interval $I$ is a {\em $\delta$-neighborhood} of an interval $J$, which is denoted by $(1+2\delta)J$, if $J\Subset I$ and each component $I \setminus J$ has length equal to $\delta|J|$.
\begin{Lemma}[{\cite[Proposition~2.2]{Shen1}}]\label{distortion}For any $p,q\in \mathbb N$ and any $\delta> 0$, there
exists a constant $\delta_1 = \delta_1(\delta,p,\|F\|_{C^3}) > 0$ such that the
following holds. Let $G = \{G_j \}^s_{j=0}$ and $G'= \{G'_j\}^s_{j=0}$ be chains such that 
$G_j\Subset G'_j$ for all $0\le j\le s$. Assume the order of $G'$ is at most $p$ and 
\[\#\{j\mid G'_j \Supset G_s\} \le q.\]
If $(1+2\delta)G_s \subset G'_s$, then 
$(1+2\delta_1)G_0 \subset G'_0$. Moreover, $\delta_1\to \infty$ as $\delta \to \infty$.
\end{Lemma}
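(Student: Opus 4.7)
The plan is to combine the real Koebe principle for $C^3$ interval maps with a careful bookkeeping of critical passages along the chain. Let $0\le j_1<\cdots<j_r<s$ be those indices at which $G'_{j_i}$ meets $\crt(F)$; by hypothesis $r\le p$, so the chain splits into at most $p+1$ diffeomorphic blocks, on each of which a suitable iterate of $F$ restricts as a diffeomorphism mapping some component inside $G'_{j_{i-1}+1}$ onto $G'_{j_i}$.

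On each diffeomorphic block I would invoke the $C^3$ cross-ratio inequality (in the form of van Strien--Vargas, or Kozlovski--Shen--van Strien): the distortion of the inverse composition on $G_s$ relative to $G'_s$ is controlled by a function of $\sum_j |G_j|/|G'_j|$, modulo an additive correction of order $C(\|F\|_{C^3})\sum_j |G'_j|^2$. The two hypotheses are precisely what bound these sums. The indices $j$ with $G'_j\supset G_s$ number at most $q$ and each contributes a bounded amount, while those with $G'_j\not\supset G_s$ yield intervals with intersection multiplicity bounded in terms of the order $p$ (since components of pullbacks of disjoint intervals along a chain with at most $p$ critical passages have multiplicity at most $p+1$), hence total length at most $(p+1)|\I|$. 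Pulling back the inclusion $(1+2\delta)G_s\subset G'_s$ across each diffeomorphic block then yields an inclusion of the form $(1+2\delta'')G_{j_i+1}\subset G'_{j_i+1}$ with $\delta''$ depending monotonically on $\delta, p, q, \|F\|_{C^3}$ and tending to infinity with $\delta$.

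At each of the at most $p$ critical indices, the inverse branch factors, after a $C^3$-bounded change of coordinates, as a quadratic model $z\mapsto z^2$. An elementary computation shows that a $(1+2\eta)$-neighborhood pulls back through $z\mapsto z^2$ to at least a $(1+2c\sqrt{\eta})$-neighborhood with $c=c(\|F\|_{C^3})>0$; so any definite $\eta$ produces a definite preimage neighborhood, and the map $\eta\mapsto c\sqrt\eta$ is monotone and unbounded. Iterating the diffeomorphic pullbacks alternating with the $p$ critical passages yields the claimed constant $\delta_1=\delta_1(\delta,p,\|F\|_{C^3})>0$ with $\delta_1\to\infty$ as $\delta\to\infty$.

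The main obstacle is the uniform control of the cross-ratio sums along a potentially very long chain: this is exactly where both hypotheses (order $\le p$ and the count $\#\{j:G'_j\supset G_s\}\le q$) enter in an essential way, and it is the technical heart of the argument. The passage from the classical negative-Schwarzian reasoning—unavailable here—to purely $C^3$ cross-ratio estimates with explicit additive corrections is what forces the $\|F\|_{C^3}$ dependence in $\delta_1$ and what demands the delicate separation of the chain into the "large" indices (contained in $G'_j$) and the "small" ones (with controlled multiplicity).
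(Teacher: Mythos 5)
The lemma is quoted in the paper directly from \cite{Shen1} (Proposition~2.2 there) with no proof supplied, so there is no internal argument to compare against; I can only assess your sketch on its own terms. Your architecture is the right one and does match the strategy underlying Shen's proof: decompose the chain at the at most $p$ critical passages into diffeomorphic blocks, control distortion across each block via a $C^3$ cross-ratio estimate with an additive error governed by $\sum_j |G'_j|^2$, and pay a controlled square-root loss at each critical passage.

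The gap is in the step that is supposed to bound the error sum. Your claim that ``components of pullbacks of disjoint intervals along a chain with at most $p$ critical passages have multiplicity at most $p+1$,'' used to conclude $\sum_{G'_j\not\supset G_s}|G'_j|\le (p+1)|\I|$, is not a correct general fact about chains: the $G'_j$ are successive pullbacks of a \emph{single} interval $G'_0$, and bounded order alone does not give bounded intersection multiplicity of the $G'_j$ (think of pullbacks nesting onto a repelling periodic orbit, which occurs with order $0$). In Shen's and Kozlovski--Shen--van Strien's work, multiplicity bounds for such chains are obtained from niceness of a terminal interval or are carried as an explicit hypothesis, neither of which is available here; the count $\#\{j: G'_j\Supset G_s\}\le q$ is precisely the substitute, and extracting a uniform control on $\sum_j |G'_j|^2$ from it (limiting how often the chain ``engulfs'' $G_s$, hence forcing geometric decay between such returns) is the real content of Shen's Proposition~2.2 and is not delivered by your multiplicity assertion. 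You correctly flag this as the technical heart, but the argument you give for it states an unjustified multiplicity bound as if it were known, so the proof does not close. One further signal that a finer argument is needed: the stated constant is $\delta_1=\delta_1(\delta,p,\|F\|_{C^3})$ with no $q$-dependence, while your crude length count $\sum|G'_j|\le q|\I|+(p+1)|\I|$ would unavoidably feed $q$ into $\delta_1$; Shen's treatment of the engulfing indices therefore cannot simply be the linear-in-$q$ bound you use.
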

For two intervals $I$ and $J$, we briefly say $J$ is geometrically deep inside $I$ if there is a large $\delta$ such that $(1+2\delta)J\subset I$.

\begin{Rm}If $F^q|_{I}:I\to I'$ is a first return map and $J'$ is geometrically deep inside $I'$, then it follows immediately from Lemma~\ref{distortion} that $J:=(F^q|_{I})^{-1}(J')$ is geometrically deep inside $I$.
\end{Rm}

All the central cascades have been  proved to be  essentially saddle-node in the following sense by Shen\cite{C2Density}:

\begin{Lemma}[{\cite[Proposition~5.1, Theorem~5.4]{C2Density}}]\label{sn}
For any $\delta>0$ and $\rho>0$, there exists $b=b(\delta, \rho,\|F\|_{C^3})$ and $\eta=\eta(\delta, \rho,\|F\|_{C^3})>1$ with the following property. Consider a central cascade $B_1\Supset B_2\Supset \cdots\Supset B_L$, assume $B_1\supset (1+2\delta)B_2$, $\Lambda_{B_1}<\rho$ and $L>3b$. Then
\begin{enumerate}
\item the central cascade $B_{b}\Supset B_{b+1}\Supset\cdots\Supset B_{L'}$ is of saddle-node type for some $L-b<L'\le L$;
\item for any $x\in B_{L'}$, we have
\[|R_{B_1}(x)-x|\ge \frac{|B_1|}{b};\] 
\item for each $1\le j\le L-1$, we have the Yoccoz equality:
\[\frac{1}{\eta k^2} <\frac{|B_{j}\setminus B_{j+1}|}{|B_1|}<\eta\frac{1}{k^2} \]
where $k=\min (j,L-j)$.

\end{enumerate}
\end{Lemma}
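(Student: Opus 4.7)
The plan is to analyze the first return map $\phi := R_{B_1}|_{B_2}$ and show that its dynamics approximates iteration near a saddle-node fixed point. First, using the hypothesis $(1+2\delta)B_2\subset B_1$, the bound $\Lambda_{B_1}<\rho$, and Lemma~\ref{distortion}, I would establish uniformly bounded geometry and distortion for the branches of $\phi$, with constants depending only on $\delta$, $\rho$, and $\|F\|_{C^3}$. Centrality of the cascade together with $B_{j+1}=\L_{\bf c}(B_j)$ forces the post-critical orbit of $\phi$ to descend level by level through the cascade, so $\phi$ behaves nearly as a translation on a definite neighborhood of ${\bf c}$ once $L$ is large.

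For (1), I would argue by contradiction. If the sub-cascade $B_b\Supset\cdots\Supset B_{L'}$ fails to be of saddle-node type, then the associated first return map either possesses a fixed point in $B_{b+1}\setminus B_{L'}$ -- which, by the bounded-geometry analysis of its multiplier, traps the central orbit and obstructs further descent toward $B_L$ -- or it has a critical value outside $B_{L'}$, directly contradicting $R_{B_{L'-1}}({\bf c})\in B_{L'}$. Applying Theorem~\ref{realbounds} to control the initial portion of the cascade fixes a uniform threshold $b=b(\delta,\rho,\|F\|_{C^3})$ and forces the existence of $L'$ with $L-b<L'\le L$ for which the sub-cascade is saddle-node.

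For (2) and (3), the argument passes through an approximate Fatou coordinate. Using the saddle-node structure from (1) and the uniformly bounded geometry, I would construct a quasi-symmetric semi-conjugacy $\Psi$ from $\phi|_{B_b}$ to the translation $w\mapsto w+1$ on a half-plane strip, under which $B_{j+1}$ corresponds to $\{\operatorname{Re} w > j\}$ up to bounded additive error depending only on $\delta,\rho,\|F\|_{C^3}$. One fundamental domain at level $j$ has unit $\Psi$-width, and pulling it back through $\Psi$ yields $|B_j\setminus B_{j+1}|\asymp |B_1|/k^2$ with $k=\min(j,L-j)$, which is the Yoccoz estimate in (3). Since one iterate of $\phi$ advances the $\Psi$-coordinate by $1$, the real displacement $|R_{B_1}(x)-x|$ on $B_{L'}$ is at least a definite multiple of $|B_1|/L$; absorbing the implicit constant into $b$ gives (2).

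The main obstacle is securing the Fatou-coordinate construction uniformly in the $C^3$-norm of $F$, since the classical parabolic conjugacy is typically only $C^0$. I would bypass this by building $\Psi$ as a quasi-symmetric rather than smooth change of coordinate, constructed by iterated application of Lemma~\ref{distortion} along the cascade combined with the cross-ratio distortion estimates available for $C^3$ interval maps. Once uniform quasi-symmetric control on $\Psi$ is in hand, statements (1), (2), and (3) follow by direct comparison with the linear model.
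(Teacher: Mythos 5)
This lemma is not proved in the paper: as the bracketed citation in its heading shows, it is imported verbatim from Shen's work (\cite[Proposition~5.1, Theorem~5.4]{C2Density}), so there is no in-paper argument to compare against. What follows is an assessment of whether your proposal would establish the statement on its own.

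Your structural plan (isolate the saddle-node configuration, build a Fatou-type coordinate, read off the Yoccoz estimate) is the correct moral picture, and you rightly flag the central obstruction: for a $C^3$ interval map there is no holomorphic Fatou coordinate. But your proposed remedy does not close the gap. A merely quasi-symmetric semi-conjugacy $\Psi$ to $w\mapsto w+1$ controls cross-ratios, not Lebesgue lengths: the $\Psi$-preimages of the unit fundamental domains $[j,j+1]$ can shrink like $j^{-s}$ for any $s$, or even exponentially, while $\Psi$ stays uniformly quasi-symmetric. So the step ``one fundamental domain has unit $\Psi$-width, hence pulling back gives $|B_j\setminus B_{j+1}|\asymp |B_1|/k^2$'' is not a deduction from quasi-symmetry but is precisely the Yoccoz estimate you set out to prove. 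The genuine argument (Shen's Proposition~5.1/Theorem~5.4, or the Yoccoz Lemma in de~Melo--van~Strien) uses the $C^3$ regularity and the quadratic critical point to approximate the central return map by $x\mapsto x+\alpha x^2+O(|x|^3)$ near the virtual parabolic point and then estimates the consecutive orbit gaps directly; this quantitative nonlinearity control is exactly what a soft quasi-symmetric change of coordinate discards. A secondary issue: your derivation of~(2) claims a displacement $\gtrsim |B_1|/L$ and proposes to ``absorb the implicit constant into $b$,'' but $b$ is fixed by $\delta,\rho,\|F\|_{C^3}$ while $L>3b$ is unbounded, so $|B_1|/L$ is strictly weaker than the asserted $|B_1|/b$ for large $L$. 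The correct bound for $x\in B_{L'}$ should be localized to the deepest levels (indeed $\min(L',L-L')<b$ in the allowed range, giving roughly $|B_1|/b^2$ after the Yoccoz estimate is in hand), not scaled by $L$; your argument does not produce this. Likewise the case analysis in~(1) -- ``a fixed point traps the central orbit'' -- needs an actual mechanism, since a fixed point does not by itself obstruct descent; this too requires the bounded-nonlinearity control rather than a purely topological observation.
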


\subsection{Admissible intervals and transition maps}
We say an interval $T$ is an {\it admissible interval} if $T\in \mathcal M(\mathfrak E_0)$. An admissible interval $T'$ is called  a {\it pullback} of $T$ if $T'\in \mathcal M(T)$. More precisely, we say $T'$ is a $k$-pullback of $T$ if $F^k(T') \subset T$ and $F^k(\partial T')\subset \partial T $.\par
For an admissible interval $T$, let $\mathcal A_T:T\to \mathrm{int}(\I)$ be an orientation-preserving affine homeomorphism.
Let $T'$ be a $k$-pullback of $T$, then the {\it transition map} of $T$ and $T'$is defined as
\[G_{T,T'}:=\mathcal A_T\circ F^k\circ \mathcal A^{-1}_{T'}:\mathrm{int}(\I) \to \mathrm{int}(\I).\]
 Let  $\{G_s\}^k_{s=0}$ be the chain from $T'$ to $T$, that is,  a sequence of  open intervals such that $G_k=T$, $G_0=T'$ and $G_{s+1}$ is a component of $F^{-1}(G_s)$  for every $0\le s\le k-1$. For a critical point ${\bf c} \in \mathrm{Crit}(F)$, if ${\bf c}\notin G_s$ for every $1\le s\le k-1$,  then we say the transition map $G_{T,T'}$ is {\em short (with respect to $\bf c$)}, otherwise $G_{T,T'}$ is called  {\em long (with respect to {\bf c})}. For a long transition map, let $1\le m_1<\cdots<m_\ell\le k-1$ be all the integers such that $G_{m_j}\ni {\bf c}$, set $T_{\ell+1-j}=G_{m_j}$, then we have a canonical decomposition (with respect to $\bf c$): \[G_{T,T'}=G_{T,T_1}\circ G_{T_1,T_2}\circ G_{T_{\ell-1},T_\ell}\circ G_{T_\ell,T'}.\]\par

We consider the principal nest:
\[I_0:=(\alpha,-\alpha)\times\{0\}\ni {\bf c_0}, I_1=\L_{\bf c_0}(I_0), \cdots, I_{n}=\L_{\bf c_0}(I_{n}),\cdots,\]
and $I_{\infty}=\bigcap I_{n}$ is a periodic interval since $f$ is renormalizable with respect to $0$.\par
To describe the geometric properties of $F$, we need the following definition.
\begin{definition}For each admissible interval $I$, let $U(I)$ be the union of all the components of $D_I$ which intersect $I\cap \omega(\bf c_0)$. We say $I$ has $C$-bounded geometry if
\begin{enumerate}
\item $((1+2C^{-1})I-(1-2C^{-1})I) \cap \omega(\bf c_0)=\emptyset$;
\item for each component $J$ of $I\setminus \partial U(I)$, $|J|>C^{-1}|I|$.
\end{enumerate}
An admissible interval which satisfies condition~$(1)$ is called $C$-nice.
\end{definition}
\begin{Rm}As $f$ is infinitely renormalizable, for any ${{\bf c}}\in\mathrm{Crit}(F)$, $\omega({\bf c})=\omega({\bf c_0})= P(F)$, where $P(F)$ is the postcritical set of $F$.
\end{Rm}

%
Let $m(0)=0$ and let $m(1)<m(2)<\cdots<m(\kappa)$ be all the non-central return moments, i.e., $R_{I_{m(k)-1}}$ displays a non-central return. The integer $\kappa$ is called the {\em height} of $F$.
\begin{Lemma}\label{nice}For any $q>0$ and $\rho>0$, there exists $C_1=C_1(\rho,\|F\|_{C^3})>0$  and $C_2=C_2(\rho,q,\|F\|_{C^3})>0$  with the following properties. If $\Lambda_{I_0}<\rho$, then
\begin{enumerate}
\item $1+C_1^{-1}<\lambda_{I_{m(k)}}<\rho$ for all $0\le k\le \kappa$;
\item $I_0$ is $C_1$-nice;
\item  for any $t\in \mathbb N$ with $\inf\limits_{k}|t-m(k)|\le q$, $I_t$ has $C_2$-bounded geometry.
\end{enumerate}
\end{Lemma}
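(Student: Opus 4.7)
The plan is to derive all three conclusions from the combination of the real bounds (Theorem~\ref{realbounds}), the saddle-node structure of central cascades (Lemma~\ref{sn}), and the distortion estimate (Lemma~\ref{distortion}), with the hypothesis $\Lambda_{I_0}<\rho$ supplying the a priori upper bounds. For part (1), the upper bound $\lambda_{I_{m(k)}}\le\Lambda_{I_0}<\rho$ is immediate because each $I_{m(k)}$ is a symmetric nice interval in $\mathcal M(I_0)$. For the lower bound with $k\ge 1$, I would apply Theorem~\ref{realbounds} to the two-step nest $I_{m(k)-1}\supset I_{m(k)}\supset I_{m(k)+1}$: by definition $R_{I_{m(k)-1}}$ is non-central, so $\lambda_{I_{m(k)}}>\lambda(\|F\|_{C^3})$. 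For the boundary case $k=0$ one has to argue separately, since there is no preceding level to feed into Theorem~\ref{realbounds}; I would produce a thickening $\widetilde I_0\supset I_0$ via an $F^n$-pullback whose boundary lies in $\{\pm\beta\}$, verify it is nice, and then run real bounds on $\widetilde I_0\supset I_0\supset I_1$. Setting $C_1$ to be (essentially) $1/(\lambda-1)$ then gives the claim.

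For part (2), I would argue that $\omega(\mathbf c_0)\cap I_0$ lies entirely in the closure of $U(I_0)$, because any point in $\omega(\mathbf c_0)\cap I_0$ either enters $I_0$ again or coincides with the critical orbit in $I_0$. Since $I_0$ is nice, every first-entry component $\mathcal L_y(I_0)$ is compactly contained in $I_0$, and the largest such component is $I_1=\mathcal L_{\mathbf c_0}(I_0)$. Part~(1) gives $|I_1|/|I_0|\le(1+C_1^{-1})^{-1}$, which already produces a definite gap between $I_1$ and $\partial I_0$. To promote this to a gap between $\omega(\mathbf c_0)$ and $\partial I_0$ I would use Lemma~\ref{distortion}: for each entry component $\mathcal L_y(I_0)$, the chain back to $I_0$ has bounded order (controlled by the number of critical points of $F$) and intersection multiplicity (controlled by niceness), so the definite collar around $I_1$ inside $I_0$ transfers to a definite collar of every $\mathcal L_y(I_0)$ inside $I_0$, enforcing that $\omega(\mathbf c_0)\cap I_0$ stays away from $\partial I_0$ by a fraction depending only on $\rho$ and $\|F\|_{C^3}$.

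For part (3), I would split into two cases according to whether $t\ge m(k)$ or $t\le m(k)$ for the closest non-central moment. In the former case, $I_{m(k)}\supset I_{m(k)+1}\supset\cdots\supset I_{m(k+1)-1}$ is a maximal central cascade; by part~(1) its top scaling factor exceeds $1+C_1^{-1}$, so Lemma~\ref{sn}(3) (Yoccoz equality) applies and yields
\[
\frac{1}{\eta q^2}\le\frac{|I_j\setminus I_{j+1}|}{|I_{m(k)}|}\le\frac{\eta}{q^2}
\]
for every $j$ with $|j-m(k)|\le q$. The bounded geometry of $I_{m(k)}$ then follows directly from part~(2) and these estimates, because the components of $I_{m(k)}\setminus\partial U(I_{m(k)})$ nearest to $\mathbf c_0$ are exactly the $I_{m(k)+j}\setminus I_{m(k)+j+1}$ up to the first non-central return of $R_{I_{m(k)}}$. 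To push bounded geometry to $I_t$ itself I would transfer it along the first-return maps $R_{I_{m(k)}},\ldots,R_{I_{t-1}}$ using Lemma~\ref{distortion}: each such map is a composition of first-return maps whose chain order and intersection multiplicity are bounded in terms of $q$ alone, so distortion is bounded and the bounded geometry of $I_{m(k)}$ survives. The case $t<m(k)$ is symmetric, using instead the central cascade ending at $I_{m(k)-1}$.

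The main obstacle I expect is bookkeeping rather than conceptual: ensuring that in part~(3) the chains from $I_t$ back to $I_{m(k)}$ genuinely have order and intersection multiplicity bounded solely by $q$, $\rho$ and $\|F\|_{C^3}$, so that Lemma~\ref{distortion} produces a single uniform constant $C_2$ rather than one that blows up with the depth inside the cascade. The secondary difficulty is the $k=0$ case of part~(1), since Theorem~\ref{realbounds} has no natural predecessor level to invoke; constructing the auxiliary thickening $\widetilde I_0$ and checking its niceness using the structure of $\mathfrak E_0$ requires a small but careful argument about the position of $\pm\beta$ relative to the postcritical set.
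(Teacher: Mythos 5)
The paper's proof of this lemma is a one-line citation: part (1) is attributed to Theorem~\ref{realbounds}, and parts (2) and (3) are deferred to Proposition~5.10 and Theorem~5.5 of Shen's paper \cite{C2Density}. You instead attempt a self-contained reconstruction from the tools already stated in Section~2 (Theorem~\ref{realbounds}, Lemma~\ref{distortion}, Lemma~\ref{sn}), which is a genuinely different route. For part (1) your treatment of the upper bound (immediate from $\Lambda_{I_0}<\rho$) and of the lower bound for $k\ge 1$ (apply Theorem~\ref{realbounds} across the non-central return at level $m(k)-1$) is exactly what the paper intends, and you are right that the $k=0$ case needs a separate argument that the paper also leaves implicit.

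Your sketches of parts (2) and (3), however, have gaps that go beyond bookkeeping. For (2): the $C_1$-niceness condition $\bigl((1+2C_1^{-1})I_0\setminus(1-2C_1^{-1})I_0\bigr)\cap\omega(\mathbf c_0)=\emptyset$ is two-sided, and your argument addresses only the inner half. Moreover, the proposed appeal to Lemma~\ref{distortion} presupposes what is being shown: to enlarge a chain from $\mathcal L_y(I_0)$ to $I_0$ you need an ambient chain $\{G'_s\}$ with $G'_k\supset(1+2\delta)I_0$ a nice interval for some definite $\delta$, and the availability of such an outer buffer is essentially the outer half of the claim; nor does having a collar around the single component $I_1$ transport to a collar around other components $\mathcal L_y(I_0)$ without a further mechanism. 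For (3): the identification of ``the components of $I_{m(k)}\setminus\partial U(I_{m(k)})$ nearest to $\mathbf c_0$'' with the annuli $I_{m(k)+j}\setminus I_{m(k)+j+1}$ is wrong — $C$-bounded geometry is a statement about the partition of $I_{m(k)}$ into individual first-return components meeting $\omega(\mathbf c_0)$ and the gaps between them, and each cascade annulus $I_j\setminus I_{j+1}$ typically contains many such components. The Yoccoz estimate from Lemma~\ref{sn} controls the annuli, but converting that into control of the return partition, and then propagating it from the non-central level $m(k)$ to an arbitrary nearby level $I_t$ with constants depending only on $q,\rho,\|F\|_{C^3}$, is precisely the content of Shen's Theorem~5.5 and is a nontrivial argument in its own right; it is not a direct consequence of Lemma~\ref{sn}(3) plus Lemma~\ref{distortion}.
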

\begin{proof}
Theorem~\ref{realbounds} implies $(1)$. Statements $(2)$ and $(3)$ follows from \cite[Proposition~5.10]{C2Density} and \cite[Theorem~5.5]{C2Density} respectively.
\end{proof}
Given an  interval $J\subset \mathbb R$, let $\C_{J}:=\C\setminus(\R\setminus J)$ denote the plane slit along two rays. 
Following Shen\cite{C2Density}, we define the Epstein class as following. For any $C>0$ and $\eta>0$, the class $\mathcal K(C,1+\eta)$ consists of diffeomorphisms $\phi:\I \to \I$ with following properties:
\begin{enumerate} 
\item the $C^{1+1/2}$-norm of $\phi$ is at most $C$;
\item $\phi^{-1}|_{\mathrm{int}(\I)}$ extends to a real symmetric $(1+\eta)$-qc map from $\C_{\mathrm{int}(I)}$ into itself.
\end{enumerate} 
For any $u\in [-1/2,1/2]$, let $Q_u(z)=u(z^2-1)+z$. For any $v\in (0,2]$, let $P_v(z)=v(z^2-1)+1$. Let $\mathcal{SE}(C,1+\eta, M)$ denote the set of all functions $\Phi:\I\to \I$ which can be written as
\[\Phi=\psi_m\circ \phi_m\circ\cdots\psi_1\circ\phi_1\]
for some $m\le M$, where for each $1\le j\le m$, $\phi_j \in \mathcal K(C,1+\eta)$; and $\psi_j=Q_{u_j}$ for some $u_j\in [-1/2,1/2]$ or $\psi_j=P_{v_j}$ for some $v_j\in [1/C,2]$.  We say $\Phi:\I\to \I$ is in {\em the Epstein class} if $\Phi \in \mathcal{SE}(C,1,M)$ for some $C>0$ and $M>0$.
\begin{Rm}
For any $C>0$,$\eta>0$ and $N>0$, $\mathcal{SE}(C,1+\eta,M)$ is compact in $C^{1}$-topology. If $\Phi_k \in \mathcal{SE}(C,1+1/k,M)$ converges to $\Phi$ in $C^1$-topology, then $\Phi \in \mathcal{SE}(C,1,M)$.
\end{Rm}
\begin{Lemma}\label{lem:conformal} If $F^k:J \to F^k(J)$ is a diffeomorphism, then $F^{-k}:F^k(J) \to J$ extends to a conformal map from $\C_{F^k(J)}$ into $\C_{J}$.
\end{Lemma}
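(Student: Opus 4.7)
The plan is to prove the lemma by induction on $k$, reducing it to a one-step assertion about a single unimodal factor $f_j$. For $k=0$ the map is the identity and there is nothing to prove. For $k\ge 1$, set $J_s:=F^s(J)$. Since $F^k|_J$ is a diffeomorphism, so are $F^{k-1}|_J\colon J\to J_{k-1}$ and $F|_{J_{k-1}}\colon J_{k-1}\to J_k$. Applying the induction hypothesis to $F^{k-1}|_J$ yields a conformal extension $\Phi_{k-1}\colon \C_{J_{k-1}}\to\C_J$ of $F^{-(k-1)}|_{J_{k-1}}$. Consequently it suffices to produce a conformal extension $\Phi_1\colon \C_{J_k}\to\C_{J_{k-1}}$ of $F^{-1}|_{J_k}$; the composition $\Phi_{k-1}\circ\Phi_1\colon \C_{J_k}\to\C_J$ will then be the desired extension of $F^{-k}|_{J_k}$.

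For the one-step case, the slice structure $F(x,j)=(f_j(x),\,j+1\bmod {\bf N})$ forces $J_{k-1}$ to lie in a single slice $\I\times\{j\}$, and the claim reduces to the unimodal statement: if $f_j\colon I\to f_j(I)$ is a diffeomorphism then $f_j^{-1}$ extends to a conformal map from $\C_{f_j(I)}$ into $\C_I$. The diffeomorphism hypothesis forces $0\notin I$; since $0$ is the unique critical point of $f_j$ on $\I$ and $f_j(0)$ is the global maximum of $f_j$, the critical value $f_j(0)$ lies outside $f_j(I)$, so $\C_{f_j(I)}$ avoids the real critical value of $f_j$. Let $V$ be the connected component of $f_j^{-1}(\C_{f_j(I)})$ containing $I$; then $0\notin V$, and $f_j|_V\colon V\to\C_{f_j(I)}$ is an unbranched local biholomorphism. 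Combining the properness of the complex extension of $f_j$ (an Epstein-class property standing in the background of this paper, cf.\ the classes $\mathcal{K}(C,1+\eta)$ and $\mathcal{SE}(C,1,M)$) with the simple connectivity of $\C_{f_j(I)}$, one concludes that $f_j|_V$ is a conformal isomorphism onto $\C_{f_j(I)}$, whose inverse provides the required conformal extension of $f_j^{-1}$.

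The main obstacle is verifying that the image of this extension actually lies in $\C_I$, equivalently that $V\cap \R=I$. The real preimage $f_j^{-1}(f_j(I))\cap \R$ consists of $I$ together with a possible second interval $I^{\ast}$ on the opposite side of $0$, and the two are separated on $\R$ by the critical point $0\notin V$. To rule out that $V$ reaches $I^{\ast}$ via a complex detour, one invokes the Epstein-class property that $f_j^{-1}$ admits a single-valued univalent extension on any slit plane avoiding the critical value; equivalently, the complex continuation of the inverse branch of $f_j$ rooted in $I$ cannot cross the critical value $f_j(0)\in\R\setminus f_j(I)$. This yields $V\cap\R=I$, so $V\subseteq \C_I$, completing the one-step case and hence the induction.
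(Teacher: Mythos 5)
There is a genuine gap, and it is exactly the crux of the lemma. In the one-step case you invoke ``the properness of the complex extension of $f_j$ (an Epstein-class property standing in the background of this paper)'' to conclude that $f_j^{-1}$ extends univalently to the slit plane $\C_{f_j(I)}$. But for a general real-analytic unimodal factor $f_j$ this is simply false: an analytic map with a quadratic critical point is \emph{not} in the Epstein class, its inverse branches have singularities and branch points scattered in $\C$ rather than neatly along the real slit rays, and they do not admit global univalent continuations to $\C_{f_j(I)}$. The classes $\mathcal K(C,1+\eta)$ and $\mathcal{SE}(C,1+\eta,M)$ in the paper are quasiconformal approximations to the Epstein class precisely because the maps under consideration are not literally Epstein; the genuine Epstein class corresponds to $\eta=0$ and is the exceptional limiting situation. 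So your argument is circular: the conclusion of Lemma~\ref{lem:conformal} is essentially the Epstein property you are assuming.

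What makes the lemma true is not a slice-by-slice local property of the factors $f_j$, but the standing hypothesis (imposed at the start of Section~2.2) that $f$ is \emph{bi-infinitely} renormalizable. The paper's proof uses the anti-renormalization structure: there is a sequence of extended maps $H_j$ such that $H_j^{m_j}|_{J_j}$ is affinely conjugate to $F$ with fibers shrinking at geometric rate $\lambda^{-j}$, and Shen's Proposition~5.7 from \cite{C2Density} shows the corresponding inverse iterates of $H_j$ extend to $\exp(O(\lambda^{-j}))$-quasiconformal maps of slit planes. Conjugating back to $F$ and letting $j\to\infty$, the compactness of normalized $K$-qc maps forces a conformal limit, which is the desired extension. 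Your induction and the observation that $0\notin I$ and $f_j(0)\notin f_j(I)$ are fine as bookkeeping, but without the anti-renormalization input you have no mechanism to rule out branching of the analytic continuation of $f_j^{-1}$ off the real line, so the one-step claim cannot be closed by the means you propose.
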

\begin{proof}Since $F$ is infinitely anti-renormalizable, there exists a sequence $\{H_j\}_{j=1}^{\infty}$ of infinitely renormalizable extended maps with following properties:
\begin{itemize}
\item there exist positive integers $m_1,m_2,\cdots$ and multi-intervals $J_1,J_2,\cdots$ such that $H_j^{m_j}|_{J_j}$ is affinely conjugate to $F$;
\item each fiber of $J_i$ has length less than $ \lambda^{-j}$ with $\lambda>1$ for all $j=1,2,\cdots$, where $\lambda$ is given by Theorem~\ref{realbounds}.  
 \end{itemize}
  Let $\Psi_j$ be the affine conjugacy between $F$ and $H_j^{m_j}|_{J_j}$. Then $H^{km_j}_j:\Psi_j(J) \to \Psi_j(F^k(J))$ is a diffeomorphism, by \cite[Proposition~5.7]{C2Density}, $H^{-km_j}_j:\Psi_j(F^k(J))\to \Psi_j(J)$ can extend to a $\exp(O(\lambda^{-j}))$-qc map from $\C_{\Psi_j(F^k(J))}$ into $\C_{\Psi_j(J)}$. Use the affine conjugacy, we conclude $F^{-k}:F^k(J) \to J$ can extend to a $\exp(O(\lambda^{-j}))$-qc map $\Phi_j$ from $\C_{F^k(J)}$ into $\C_{J}$ for all $j\in \mathbb N$. By the compactness of normalized $K$-qc maps, $\Phi_j$ converges uniformly to a conformal map $\Phi$ from $\C_{F^k(J)}$ into $\C_{J}$. Clearly, $\Phi|_{F^k(J)}=F^{-k}$.
\end{proof}

\begin{Lemma}\label{lem:epstein}There exists $C=C(\delta,M)>0$ with the following property. Let $\{G'_s\}_{s=0}^k$ and $\{G_s\}_{s=0}^k$ be chains satisfying:
\begin{itemize}
 \item $G'_s\supset G_s$ for all $0\le s \le k$ and $G_0 \cap \omega({\bf c_0})\ne \emptyset$;
 \item the multiplicity of $\{G'_s\}_{s=0}^k$ is at most $M$;
 \item  $G'_k\supset (1+2\delta)G_k$ and $|f^{k}(G_0)|\ge \delta |G_k|$.
\end{itemize}
For any $0\le s\le k$, let $\gamma_s:\overline{G_s} \to \I$ be the orientation-preserving affine homeomorphism. Then the map
\[\gamma_k\circ F^k\circ \gamma^{-1}_0:\I\to \I\]
belongs to the Epstein class $\mathcal{SE}(C,1,2M{\bf N})$.
\end{Lemma}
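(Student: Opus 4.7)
The plan is to decompose the affinely normalized iterate $\gamma_k\circ F^k\circ\gamma_0^{-1}$ along the chain into alternating diffeomorphic blocks and single ``critical'' steps (those indices $s$ with $G_s\cap\mathrm{Crit}(F)\ne\emptyset$), and to realize each block, after further affine normalization to $\I$ on both sides, as one Epstein pair $\psi_j\circ\phi_j$. Because there are exactly $\mathbf N$ critical points of $F$ and the multiplicity of $\{G'_s\}$ is at most $M$, each critical point lies in at most $M$ of the $G'_s$, so the number of critical steps is at most $M\mathbf N$; combined with at most $M\mathbf N+1$ diffeomorphic chunks between them (padded by $Q_0=\mathrm{id}$ when unpaired), the total number of pairs is at most $2M\mathbf N$.

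Before decomposing, I would propagate the space condition along the chain using Lemma~\ref{distortion}. The order of $\{G'_s\}$ is at most $M\mathbf N$, and any two $G'_j$'s containing a common later interval $G_s$ automatically meet in $G_s$, so the number of $j$ with $G'_j\supset G_s$ is at most $M$. Hence Lemma~\ref{distortion}, applied with $p=M\mathbf N$ and $q=M$, gives a uniform $\delta_1=\delta_1(\delta,M,\mathbf N,\|F\|_{C^3})$ with $G'_s\supset(1+2\delta_1)G_s$ at every intermediate level, providing the Koebe space needed below.

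For each diffeomorphic block $F^m:G_s\to G_{s-m}$ between consecutive critical indices, Lemma~\ref{lem:conformal} gives a conformal extension of $F^{-m}$ from $\C_{G_{s-m}}$ into $\C_{G_s}$; affinely normalizing both ends to $\I$ then puts this block in $\mathcal K(C,1)$, with $C^{1+1/2}$-bound following from the propagated $\delta_1$-space via Koebe. For each critical step, the normalized unimodal map $g:\I\to\I$ has its critical point and critical value at a definite distance from $\partial\I$ (from the propagated space together with the real bounds for $F$), and I would factor $g=\psi\circ\phi$ with $\phi\in\mathcal K(C,1)$ and $\psi\in\{Q_u,P_v\}$: $\psi=P_v$ when $g$ sends both endpoints of $\I$ to the same side, and $\psi=Q_u$ in the complementary almost-monotone case. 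The residual diffeomorphism $\phi$ absorbs the real-analytic straightening of the quadratic fold, and bounded geometry keeps $u\in[-1/2,1/2]$ and $v\in[1/C,2]$.

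The main technical obstacle is this last step: producing an explicit diffeomorphism $\phi$ whose inverse admits a genuine \emph{conformal} extension to $\C_{\mathrm{int}(\I)}$, so that we land in $\mathcal{SE}(C,1,2M\mathbf N)$ with $\eta=0$, rather than merely a quasiconformal one. The natural route follows the proof of Lemma~\ref{lem:conformal}: approximate $F$ by the deeply renormalized maps $H_j^{m_j}|_{J_j}$, for which the straightening of each critical piece is close to a polynomial of bounded degree, and pass to the conformal limit as $j\to\infty$ using the $C^1$-compactness of $\mathcal{SE}(C,1+1/j,M)$ recorded in the remark preceding the statement.
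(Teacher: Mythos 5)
Your proposal is fundamentally sound and rests on the same key mechanism as the paper's proof --- namely, exploiting that $F$ is infinitely \emph{anti}-renormalizable, so the whole configuration can be viewed, after affine conjugacy, as living at an arbitrarily small scale in an infinitely renormalizable extended map $H_j$, and then taking a conformal limit via the $C^1$-compactness of $\mathcal{SE}(C,1+\eta,\cdot)$. But the two arguments are organized in opposite orders. The paper does the rescaling \emph{first}: it conjugates the entire pair of chains $\{G_s\},\{G'_s\}$ by $\Psi_j$ into a copy of scale $<\lambda^{-j}$, invokes \cite[Proposition~5.8]{C2Density} as a black box to place the rescaled transition map in $\mathcal{SE}(C,1+\eta,2M\mathbf{N})$ for arbitrarily small $\eta$ (since the scale can be taken arbitrarily small), and then observes that this map equals $\gamma_k\circ F^k\circ\gamma_0^{-1}$ identically, so the $\eta$ can be sent to $0$. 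You instead do the block decomposition at the level of $F$ directly, handle the diffeomorphic blocks by citing Lemma~\ref{lem:conformal}, and then concede that the critical-step factorization needs the same rescaling-and-limit trick. In effect you are re-deriving the content of \cite[Proposition~5.8]{C2Density} rather than citing it; this is legitimate and arguably more illuminating, but it is more work, and the part you flag as the ``main technical obstacle'' is precisely the part the paper delegates to the cited proposition.

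Two small points to tighten. First, your pair count: with $\ell\le M\mathbf{N}$ critical steps alternating with $\ell+1$ diffeomorphic blocks, a naive factorization gives $2\ell+1$ pairs, which can exceed $2M\mathbf{N}$ by one; to land at exactly $2M\mathbf{N}$ you should absorb each diffeomorphic block into the $\phi$-factor of the adjacent critical fold rather than counting it as its own $(Q_0,\phi)$ pair. Second, and more substantively, your critical-step factorization $g=\psi\circ\phi$ requires showing not just that some diffeomorphism $\phi$ works but that the resulting $\phi$ has a $C^{1+1/2}$ bound and an honest conformal (not merely quasiconformal) extension of $\phi^{-1}$ into $\C_{\mathrm{int}(\I)}$; as you note, this again forces you back to the deep anti-renormalization argument, so you do not really avoid the paper's route --- you just partition it differently. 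Also note a subtlety glossed over in both arguments: the chain for $F$ does not coincide with a chain for $H_j$ (an $F$-step is $m_j$ iterations of $H_j$), so the block structure used by \cite[Proposition~5.8]{C2Density} is with respect to $H_j$'s critical points at the small scale, not with respect to your $F$-level decomposition; reconciling the two decompositions is implicit in the paper's citation and would need to be done explicitly in your piecewise version.
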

\begin{proof} We will use a similar argument in the proof of Lemma~\ref{lem:conformal}. Since $F$ is infinitely anti-renormalizable, there exists a sequence $\{H_j\}_{j=1}^{\infty}$ of infinitely renormalizable extended maps with following properties:
\begin{itemize}
\item there exist positive integers $m_1,m_2,\cdots$ and multi-intervals $J_1,J_2,\cdots$ such that $H_j^{m_j}|_{J_j}$ is affinely conjugate to $F$;
\item each fiber of $J_i$ has length less than $ \lambda^{-j}$ with $\lambda>1$ for all $j=1,2,\cdots$, where $\lambda$ is given by Theorem~\ref{realbounds}.  
 \end{itemize}
  Let $\Psi_j$ be the affine conjugacy between $F$ and $H_j^{m_j}|_{J_j}$. Put $\widehat{G}'_{j,s}=\Psi_j(G'_s)$ and 
$\widehat{G}_{j,s}=\Psi_j(G_s)$ for all $0\le s\le k$. Then for any $j\in \mathbb N$, two chains $\{\widehat{G}'_{j,s}\}_{s=0}^k$ and $\{\widehat{G}'_{j,s}\}_{s=0}^k$ satisfy the following conditions:
\begin{itemize}
\item $\widehat{G}'_{j,s}\supset \widehat{G}_{j,s}$ for all $0\le s \le k$ and $\widehat{G}_{j,0} \cap \Psi_j(\omega({\bf c_0}))\ne \emptyset$;
 \item the multiplicity of $\{\widehat{G}'_{j,s}\}_{s=0}^k$ is at most $M$;
 \item  $\widehat{G}'_{j,k}\supset (1+2\delta)\widehat{G}_{j,k}$ and $|f^{k}(\widehat{G}_{j,0})|\ge \delta |\widehat{G}_{j,k}|$.
 \item $|\widehat{G}'_{j,k}|<\lambda^{-j}$.
\end{itemize}
By \cite[Proposition~5.8]{C2Density},  there is a constant $C>0$ with the following property. For any $\eta>0$, there exists $j_0\in \mathbb N$ such that 
\[\widehat{\gamma}_{j_0,k}\circ (H_j^{m_j}|_{J_j})^k\circ \widehat{\gamma}^{-1}_{j_0,0}:\I\to \I\]
belongs to $\mathcal{SE}(C,1+\eta,2M{\bf N})$, where $\widehat{\gamma}_{j_0,k}=\gamma_k\circ \Psi^{-1}_j$ and $\widehat{\gamma}_{j_0,0}=\gamma_0\circ \Psi^{-1}_j$. Thus $\gamma_k\circ F^k\circ \gamma^{-1}_0:\I\to \I$ belongs to $\mathcal{SE}(C,1+\eta,2M{\bf N})$. As $\eta$ is arbitrary, $\gamma_k\circ F^k\circ \gamma^{-1}_0:\I\to \I$ belongs to  the Epstein class $\mathcal{SE}(C,1,2M{\bf N})$.
\end{proof}

\begin{Lemma}\label{FR} There exists a constant $C'=C'(C)>0$ with the following property. Let ${\bf c}\in\mathrm{Crit}(F)$ and ${\bf c} \in T$ be a $C$-nice admissible interval.  If $T'=\mathcal L_{\bf c}(T)$ and $|R_{T}(T')|\ge{C}^{-1}|T|$, then  $T'$ is $C'$-nice and  the transition map $G_{T,T'} \in \mathcal{SE}(C',1,8{\bf N})$.
\end{Lemma}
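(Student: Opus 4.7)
The approach combines the pullback distortion estimate (Lemma~\ref{distortion}) with the Epstein-class estimate (Lemma~\ref{lem:epstein}), applied to the first-entry chain $\{G_s\}_{s=0}^k$ of $T'=\mathcal L_{\bf c}(T)$ (so $G_0=T'$, $G_k=T$, and $k$ is the first-entry time). The $C$-niceness of $T$ supplies Koebe space at the target end of the chain, and the hypothesis $|R_T(T')|\ge C^{-1}|T|$ provides the size bound required by Lemma~\ref{lem:epstein}. Concretely, set $V:=(1+2C^{-1})T$; by $C$-niceness, $V\setminus T$ is disjoint from $\omega({\bf c_0})$. Pull $V$ back along the chain, letting $G'_s$ be the component of the corresponding preimage that contains $G_s$, so that $G'_k=V\supset(1+2C^{-1})G_k$ and $G'_s\supset G_s$.

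\textbf{Key technical input.} The chain $\{G'_s\}$ has multiplicity bounded by an absolute constant (in fact $\le 4$) and order $O({\bf N})$, uniformly in $k$. This is the principal structural ingredient: because $T'$ is the \emph{central} first-entry component of the nice interval $T$, any overlap in the pullback chain can arise only from folding at one of the ${\bf N}$ critical points of $F$, and the niceness of $T$ together with the first-entry property prevents each critical point from contributing more than a bounded amount. This follows the standard analysis of first-return chains for real box mappings and is the chief obstacle in the proof.

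\textbf{Applying the two lemmas.} With multiplicity $M\le 4$, the hypotheses of Lemma~\ref{lem:epstein} with $\delta:=C^{-1}$ all hold: $G'_s\supset G_s$; ${\bf c}\in G_0\cap\omega({\bf c_0})$ (${\bf c}$ being recurrent); $G'_k\supset(1+2\delta)G_k$; and $|F^k(G_0)|=|R_T(T')|\ge\delta|G_k|$. The lemma then delivers $G_{T,T'}\in\mathcal{SE}(C',1,2M{\bf N})\subset\mathcal{SE}(C',1,8{\bf N})$, which is the Epstein-class half of the conclusion. Separately, Lemma~\ref{distortion} applied to the pair $\{G_s\}\subset\{G'_s\}$ produces $(1+2\delta_1)T'\subset G'_0$ for some $\delta_1=\delta_1(C^{-1},{\bf N},\|F\|_{C^3})>0$, independent of $k$.

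\textbf{$C'$-niceness.} Near $\partial T'$, the critical point ${\bf c}$ lies in the interior of $T'$, so $F^k$ is a local diffeomorphism there with bounded distortion (by the Epstein-class conclusion), and $F^k(\partial T')\subset\partial T$. Under $F^k$, the outer annulus $(1+2\delta_1)T'\setminus T'$ maps into $V\setminus T$, which is disjoint from $\omega({\bf c_0})$ by $C$-niceness; invariance of $\omega({\bf c_0})$ transfers this disjointness to the annulus itself. The inner annulus $T'\setminus(1-2(C')^{-1})T'$ maps into a neighborhood of $\partial T$ inside $T$ of width $\le (C')^{-1}|R_T(T')|\le (C')^{-1}|T|$; choosing $C'$ large enough relative to $C$ keeps this neighborhood inside the $C$-nice gap $(1+2C^{-1})T\setminus(1-2C^{-1})T$, hence disjoint from $\omega({\bf c_0})$, and bounded distortion pulls this gap back to the required gap at $\partial T'$. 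This yields $C'$-niceness of $T'$ with $C'=C'(C)$.
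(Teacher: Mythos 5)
Your proposal is correct, and it follows the paper's approach for the Epstein-class half but departs from it for the niceness half. For the Epstein-class conclusion both you and the paper take the same steps: bound the multiplicity and order of the enlarged chain $\{G'_s\}$ (the paper cites Lemma~3.8 of Shen's \emph{C2Density} for multiplicity $\le 4$ and order $\le {\bf N}$, which you assert as the ``key technical input'' from the standard theory of first-entry chains for real box maps), check the hypotheses of Lemma~\ref{lem:epstein} with $\delta=C^{-1}$, and conclude $G_{T,T'}\in\mathcal{SE}(C',1,8{\bf N})$. For the $C'$-niceness of $T'$, however, you use Lemma~\ref{distortion} to get a definite extension $(1+2\delta_1)T'\subset G'_0$ and then push the outer and inner annuli of $T'$ forward under $F^k$ into the $C$-nice gap of $T$, invoking forward-invariance of $\omega({\bf c_0})$ to pull the disjointness back. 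The paper explicitly mentions this very route (``A straightforward way to prove $T'$ has $C''$-nice property is using Lemma~\ref{distortion}, which is left to the readers'') and deliberately chooses a different one: it instead extracts a subsequence of counterexample chains, uses the compactness of the Epstein class $\mathcal{SE}(C',1,8{\bf N})$ to pass to a $C^1$ limit, and derives a contradiction from uniform convergence. The paper's author motivates this choice by noting that the compactness argument ``also works for subsequent Lemmas'' (Lemmas~\ref{saddle-node} and~\ref{saddlebound} reuse the same template to get bounds that are harder to reach via a single application of Lemma~\ref{distortion}). So the two arguments buy different things: yours is more elementary and self-contained here, at the cost of a bit of bookkeeping about where the folded images of the two boundary annuli of $T'$ land; the paper's is slightly more abstract but installs a compactness template that the author reuses. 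One small thing worth flagging in your write-up: the step ``maps into a neighborhood of $\partial T$ inside $T$ of width $\le (C')^{-1}|R_T(T')|$'' should carry a bounded multiplicative distortion constant $C_0(C)$ coming from the Epstein-class membership (and the quadratic folding at ${\bf c}$), which you then absorb by taking $C'$ large; you acknowledge this implicitly, but it should be stated explicitly if written out in full.
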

\begin{proof}Let $\{G_s\}_{s=0}^k$ be the chain from $T'$ to $T$ and $\{G'_s\}_{s=0}^k$ be the chain such that $G'_k=(1+2C^{-1})T$ and $G'_0 \supset T'$.\par
By \cite[Lemma~3.8]{C2Density}, the chain $\{G'_s\}_{s=0}^k$ has order at most $\bf N$ and multiplicity at most $4$. As $|F^k(G_0)|=|R_T(T')|>C^{-1}|T|=C^{-1}|G_k|$. It follows Lemma~\ref{lem:epstein} that there exitsts $C'>0$ such that $G_{T,T'} \in \mathcal{SE}(C',1,8{\bf N})$.\par
Now we prove $T'$ is $C''$-nice for some $C''>0$. To this end, let $\{\tilde G_s\}_{s=0}^k$ be the chain such that $\tilde G_k=(1+C^{-1})T$ and $\tilde G_0 \supset T'$. A straightforward way to prove $T'$ has $C''$-nice property is using Lemma~\ref{distortion}, which is left to the readers. We will use another way, by using the compactness of the Epstein class, which also works for subsequent Lemmas. By a similar argument in the previous paragraph, we can conclude $\gamma_k\circ F^k\circ \gamma^{-1}_0: \I\to \I$ is in the Epstein class $\mathcal{SE}(C',1,8{\bf N})$, where $\gamma_k:\overline{\tilde G_k} \to \I$ and $\gamma_0:\overline{\tilde G_0}\to \I$ are orientation-preserving affine homeomorphisms. We shall prove $T'$ is well inside $\tilde G_0$, that is, $\tilde G_0$ contains a definite neighborhood of $T'$. For otherwise, there exists a sequence $\{F_j\}_{j=0}^{\infty}$ of extended maps with following properties:
\begin{itemize}
\item for any $j\in \mathbb N$, $T(F_j)$ is a $C$-nice admissible interval of $F_j$;
\item for any $j\in \mathbb N$, there exists $x_j\in T(F_j)$ and $z_j \in \partial \tilde G_0(F_j)$ such that $|x_j-z_j||T'(F_j)|^{-1} \to 0$ as $j\to \infty$;
\item for any $j\in \mathbb N$, $|F^{k_j}_j(x_j)-F^{k_j}_j(z_j)|>\frac{1}{2}C^{-1}|T(F_j)|$.
\end{itemize}

Let $\gamma_{j,0}:\overline{\tilde G_0(F_j)} \to \I$ and $\gamma_{j,k_j}:\overline{\tilde G_{k_j}(F_j)} \to \I$ be the orientation-preserving homeomorphisms and $\widehat{x}_j=\gamma_{j,0}(x_j)$, $\widehat{z}_j=\gamma_{j,0}(z_j)$. Without loss of generality, we can assume $\widehat{x}_j \to \widehat x \in \I$, $\widehat{z}_j \to \widehat z \in \I$ and $\Phi_j:=\gamma_{j,k_j}\circ F^{k_j}_j\circ \gamma^{-1}_{j,0}$ converges to some $\widehat{\Phi}$ in $C^1$-topology. Then by the properties of $\{F_j\}$ we have
\begin{itemize}
\item  $|\widehat{x}_j-\widehat{z}_j| \to 0$ as $j\to \infty$, and then $\widehat x=\widehat z$;
\item  $|\Phi_j(\widehat{x}_j)-\Phi_j(\widehat{z}_j)|>\displaystyle{\frac{1}{2}C^{-1}\frac{C}{1+C}=\frac{1}{2(1+C)}}$.
\end{itemize}
Then by the uniform convergence, it follows $0=|\widehat{\Phi}(\widehat x)-\widehat{\Phi}(\widehat z)|>\frac{1}{2(1+C)}$, which is ridiculous. 
Thus $T'$ is well inside $\tilde G_0$, which implies there exists $C''>0$ such that $(1+2/C'')T'\setminus T' \cap \omega({\bf c_0})=\emptyset$ since $\tilde G_0\setminus T' \cap \omega({\bf c_0})=\emptyset$.\par
 A similar argument shows that $T'\setminus (1-2/C'')T'  \cap \omega({\bf c_0})=\emptyset$. Hence, $T'$  is $C''$-nice. Enlarge $C'$ so that $C'>C''$, and we are done.
 \end{proof}
 
\begin{Coro}\label{coro:FR} There exists a constant $C'=C'(C,d)>0$ with the following property. Let ${\bf c}\in\mathrm{Crit}(F)$ and ${\bf c} \in T$ be a $C$-nice admissible interval.  If $T'=\mathcal L^d_{\bf c}(T)$ and  $R_T\circ\cdots \circ R_{\mathcal L^{d-1}_{\bf c}(T)}(T')$ has length at least $C^{-1}|T|$,  then $T'$ is $C'$-nice and the transition map $G_{T,T'} \in \mathcal{SE}(C',1,8d{\bf N})$.
\end{Coro}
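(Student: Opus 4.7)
My plan is to prove the corollary by induction on $d$, with the base case $d=1$ being exactly Lemma~\ref{FR}. For the inductive step, set $T_i:=\mathcal L^i_{\bf c}(T)$ so that $T_0=T$ and $T_d=T'$. The strategy is to peel off one level of pullback at a time: apply the inductive hypothesis to $T_{d-1}$, then apply Lemma~\ref{FR} to the pair $(T_{d-1},T')$, and finally compose the two transition maps in the Epstein class.

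To transfer the hypothesis to $T_{d-1}$, I use that $R_{T_{d-1}}(T')\subset T_{d-1}$, together with monotonicity of images under set inclusion, to get
\[R_T\circ\cdots\circ R_{T_{d-2}}(T_{d-1})\supset R_T\circ\cdots\circ R_{T_{d-1}}(T').\]
The right-hand side has length at least $C^{-1}|T|$ by hypothesis, so the inductive hypothesis applied to $T_{d-1}$ delivers a constant $\bar C=\bar C(C,d-1)$ such that $T_{d-1}$ is $\bar C$-nice and $G_{T,T_{d-1}}\in\mathcal{SE}(\bar C,1,8(d-1){\bf N})$. Next, I need a one-step size bound $|R_{T_{d-1}}(T')|\ge\tilde C^{-1}|T_{d-1}|$ in order to invoke Lemma~\ref{FR}. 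Directly from the definition of $\mathcal{SE}$, every element of $\mathcal{SE}(\bar C,1,8(d-1){\bf N})$ is a finite composition of maps whose sup-norm of derivative is bounded by $\max(\bar C,4)$, so the chain rule yields a finite constant $L=L(\bar C,d,{\bf N})$ such that every $G\in\mathcal{SE}(\bar C,1,8(d-1){\bf N})$ satisfies $|G(I)|\le L|I|$ for every interval $I\subset\I$. Applying this to $G=G_{T,T_{d-1}}$ together with the affine normalizations $\sigma_0\colon T\to\I$ and $\sigma''\colon T_{d-1}\to\I$, the hypothesis gives $|G(\sigma''(R_{T_{d-1}}(T')))|\ge 2C^{-1}$, hence $|\sigma''(R_{T_{d-1}}(T'))|\ge 2C^{-1}/L$, so $|R_{T_{d-1}}(T')|\ge \tilde C^{-1}|T_{d-1}|$ with $\tilde C=CL$.

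With $T_{d-1}$ being $\max(\bar C,\tilde C)$-nice and with the single-step size bound just established, Lemma~\ref{FR} applied to $T_{d-1}$ and $T'$ produces $\hat C=\hat C(C,d)$ such that $T'$ is $\hat C$-nice and $G_{T_{d-1},T'}\in\mathcal{SE}(\hat C,1,8{\bf N})$. A direct concatenation of the factorizations defining $\mathcal{SE}$ shows $\mathcal{SE}(a,1,m)\circ\mathcal{SE}(b,1,n)\subset\mathcal{SE}(\max(a,b),1,m+n)$, and tracing the affine identifications gives $G_{T,T'}=G_{T,T_{d-1}}\circ G_{T_{d-1},T'}$. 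Therefore $G_{T,T'}\in\mathcal{SE}(C',1,8d{\bf N})$ with $C':=\max(\bar C,\hat C)$, and $T'$ is $C'$-nice (after enlarging $C'$ if necessary). The main obstacle is precisely the one-step size bound in the second paragraph: the derivative constant $L$ grows roughly exponentially with $d$ because the number of factors in the factorization is $8(d-1){\bf N}$, so $\tilde C$ and hence $C'$ necessarily depend on $d$; this is permitted by the statement $C'=C'(C,d)$, and the induction closes.
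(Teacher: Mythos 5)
Your proof is correct and matches the paper's intended approach: the paper disposes of the corollary with the one-line ``it follows by Lemma~\ref{FR} and induction,'' and your argument supplies exactly the two details that make that induction close — (i) the monotonicity argument that transfers the composite-return-map length hypothesis from depth $d$ down to depth $d-1$, and (ii) the derivative bound on maps in $\mathcal{SE}(\bar C,1,8(d-1){\bf N})$ (coming from the $C^{1+1/2}$ bound on the factors $\phi_j$ and the explicit bound $|Q_u'|,|P_v'|\le 4$ on $\I$), which converts the global length bound into the single-step bound $|R_{T_{d-1}}(T')|\ge \tilde C^{-1}|T_{d-1}|$ needed to invoke Lemma~\ref{FR} at the last step. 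Both steps are valid, the composition identity $G_{T,T'}=G_{T,T_{d-1}}\circ G_{T_{d-1},T'}$ holds since the transition time through $T_{d-1}$ adds, the closure of $\mathcal{SE}$ classes under composition with the count $m+n$ is correct, and the resulting $C'$ depends only on $C,d,{\bf N}$ as the statement permits.
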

\begin{proof}It follows immediately by Lemma~\ref{FR} and induction.
\end{proof}

\subsection{Compactness for transition maps}

\begin{Lemma}\label{4.1}For any $\theta>0$,  there exists a constant $\xi=\xi(\theta,\|F\|_{C^3})>0$ such that the following holds. Let $B, B' \in \mathcal M(I_0)$, assume $B'= \mathcal L_x(B)$ for some $x \in B$ and $B \supset (1+2\xi)B'$. Then
\[\mathcal L_{\bf c_0}(B) \supset (1+2\theta)\mathcal L_{\bf c_0}(B').\]
\end{Lemma}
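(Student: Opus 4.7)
The strategy is a compactness/contradiction argument in the spirit of the proof of Lemma~\ref{FR}. Let $k\le k'$ denote the first-entry times of ${\bf c_0}$ to $B$ and $B'$ respectively, and put $L=\mathcal L_{\bf c_0}(B)$, $L'=\mathcal L_{\bf c_0}(B')$. Since $L'$ is a connected subset of $D_{B'}\subset D_B$ containing ${\bf c_0}$, we have $L'\subset L$; note also that $B$ is automatically nice, since $\mathcal L_x(B)$ is well defined. We split according to whether $F^{k}({\bf c_0})\in B'$ (Case~A, equivalently $k=k'$) or $F^{k}({\bf c_0})\in B\setminus B'$ (Case~B, equivalently $k<k'$).

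Suppose for contradiction the lemma fails: there exist $\theta_0>0$ and a sequence $(F_n,B_n,B'_n,x_n)$ with $\|F_n\|_{C^3}$ bounded, $B_n\supset (1+2\xi_n)B'_n$, $\xi_n\to\infty$, yet $L_n\not\supset (1+2\theta_0)L'_n$. Let $\gamma_n:\overline{L_n}\to\I$ and $\beta_n:\overline{B_n}\to\I$ be the orientation-preserving affine homeomorphisms and set $\Phi_n:=\beta_n\circ F_n^{k_n}\circ\gamma_n^{-1}$. By \cite[Lemma~3.8]{C2Density} the first-entry chain from ${\bf c_0}$ to $B_n$ has order at most ${\bf N}$ and intersection multiplicity at most $4$, so Lemma~\ref{lem:epstein} places $\Phi_n$ in a common Epstein class $\mathcal{SE}(C,1,8{\bf N})$. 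Passing to a subsequence, $\Phi_n\to\Phi\in\mathcal{SE}(C,1,8{\bf N})$ in $C^1$, the interval $\beta_n(B'_n)$ (of length at most $1/(1+2\xi_n)$) shrinks to a point $y\in\overline{\I}$, and $\Phi_n(0)\to y_0\in\overline{\I}$ with $y_0=y$ in Case~A and $y_0\ne y$ in Case~B. The map $\Phi$ inherits a quadratic critical point at $0$ from ${\bf c_0}$.

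In Case~A, $\gamma_n(L'_n)$ is the component of $\Phi_n^{-1}(\beta_n(B'_n))$ containing $0$. Since $\Phi(0)=y$ and $\Phi$ has a quadratic critical point at $0$, this component shrinks like $O(\sqrt{|\beta_n(B'_n)|})\to 0$, contradicting the forced lower bound $|\gamma_n(L'_n)|\ge 2/(1+2\theta_0)$ coming from the failure hypothesis. In Case~B, let $E_n\subset B_n$ be the first-entry component of $B'_n$ containing $F_n^{k_n}({\bf c_0})$. Niceness of $B_n$ combined with $B'_n\subset B_n$ forces $E_n\subset B_n$: any boundary point $z\in\partial E_n\cap\partial B_n$ would yield $F_n^{k'_n-k_n}(z)\in\partial B'_n\subset B_n$, violating $F_n^{j}(\partial B_n)\cap B_n=\emptyset$ for $j\ge 1$. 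I would then apply Lemma~\ref{lem:epstein} to the first-entry chain from $F_n^{k_n}({\bf c_0})$ to $B'_n$ (again of bounded order and multiplicity) to obtain a second Epstein-class transition map, and compose with $\Phi_n$ to produce a two-stage Epstein-class map; the same shrinking mechanism as in Case~A then forces $|\gamma_n(L'_n)|\to 0$, yielding the required contradiction.

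The main obstacle lies in Case~B: one must ensure that the composition of the two Epstein-class transition maps is uniformly controlled in $n$, so that the shrinking of $\beta_n(B'_n)$ propagates through the second-stage first-entry map and then through the quadratic preimage under $\Phi_n$ to force shrinking of $\gamma_n(L'_n)$. Once this uniform two-stage Epstein-class control is in place, the argument closes uniformly across both cases.
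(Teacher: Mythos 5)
The paper offers no proof of this lemma beyond the citation to \cite[Proposition~4.1]{C2Density}, so I will assess your sketch on its own terms. The general strategy (compactness of normalized first-entry maps) is in the spirit of the paper, but two steps do not close as written.

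First, the application of Lemma~\ref{lem:epstein} to $\Phi_n=\beta_n\circ F_n^{k_n}\circ\gamma_n^{-1}$ is not justified. That lemma requires an \emph{outer} chain $\{G'_s\}$ with $G'_{k_n}\supset(1+2\delta)B_n$ for some fixed $\delta>0$; producing such a chain with controlled order is what requires $C$-niceness of $B_n$, as in the proof of Lemma~\ref{FR}. Lemma~\ref{4.1} carries no hypothesis that $B_n$ is uniformly $\delta$-nice (nor any bound on $\Lambda_{I_0}$), so the enlargement of $B_n$ you need simply is not available. Note that the hypothesis $B_n\supset(1+2\xi_n)B'_n$ creates room \emph{inside} $B_n$, not around it. For Case~A this is not fatal, because you can bypass Lemma~\ref{lem:epstein} entirely: apply Lemma~\ref{distortion} directly with the inner chain from $L'_n$ to $B'_n$ sitting inside the outer first-entry chain from $L_n$ to $B_n$. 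The outer chain is a first-entry chain to the nice interval $B_n$, so its order is at most $\mathbf N$ (each critical point can appear at most once before the first entry), the count $\#\{j:G'_j\Supset G_{k_n}\}$ is bounded since the intermediate $G'_j$ are disjoint from $B_n\supset B'_n$, and $(1+2\xi_n)G_{k_n}\subset G'_{k_n}$ by hypothesis. This gives $(1+2\delta_1(\xi_n))L'_n\subset L_n$ with $\delta_1\to\infty$, which is exactly the conclusion. In other words, for Case~A the distortion lemma you set aside is the right tool and the Epstein machinery is both unnecessary and, as stated, not applicable.

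Second, and more seriously, Case~B is not merely an ``obstacle to be ensured,'' it is the actual content of the lemma and your sketch leaves it open. You correctly identify $F_n^{k_n}(L'_n)=E_n$, the first-entry component of $B'_n$ in $B_n$ containing $F_n^{k_n}({\bf c_0})$, and correctly observe $E_n\Subset B_n$. But for your shrinking mechanism you need $|E_n|/|B_n|\to 0$, and this does \emph{not} follow from $|B'_n|/|B_n|\to 0$: $E_n$ is a different component of the first-entry domain to $B'_n$, and a priori nothing prevents it from having definite size inside $B_n$ even while $B'_n$ is microscopic. Producing a ``second Epstein-class transition map'' $E_n\to B'_n$ does not help, since Epstein-class membership constrains the shape of the normalized map on $\I$ but says nothing about the scale $|E_n|/|B_n|$; composing two Epstein maps likewise does not force the intermediate scale to collapse. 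Closing Case~B requires a different idea, e.g.\ passing through the sequence of first-return components of $B_n$ that the orbit of $F_n^{k_n}({\bf c_0})$ visits on its way to $B'_n$ and propagating the depth $(1+2\xi_n)B'_n\subset B_n$ backwards step by step via the Koebe/distortion lemma, showing the accumulated depth at the stage that reaches $E_n$ still tends to infinity. As written, this is the genuine gap in the proposal.

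Two smaller remarks. The failure hypothesis $L_n\not\supset(1+2\theta_0)L'_n$ does force $|L'_n|/|L_n|$ to be bounded below, so that part of the contradiction mechanism is sound. Also, Lemma~\ref{lem:epstein} is proved in the paper using the infinite \emph{anti}-renormalizability of $F$, which is a standing assumption in Section~2 but is not needed for Shen's Proposition~4.1 itself; the route through Lemma~\ref{distortion} has the advantage of using only $\|F\|_{C^3}$, matching the stated dependence of $\xi$.
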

\begin{proof}See~\cite[Proposition~4.1]{C2Density}.
\end{proof}

\begin{Lemma}\label{saddle-node}Let $T_1,T_2,\cdots,T_L$ be a maximal central cascade with respect to some ${\bf c} \in \mathrm{Crit}(F)$.
 Assume there exists a positive integer $C>1$ such that
 \begin{itemize}
 \item $T_1$ is $C$-nice with limit scaling factor $\Lambda_{T_1}<C$;
 \item $T_1\supset (1+2C^{-1})T_2$ and $\min (\ell,L-\ell)\le C$;
  \item $|T_1|\le C|R^{\ell}_T(T_\ell)|$.
  \end{itemize}
 Then there exists a positive integer $C'=C'(C,\|F\|_{C^3})$  such that the transition map $G_{T_1,T_\ell}$ belongs to the Epstein class $\mathcal{SE}(C',1,8C'{\bf N})$.
\end{Lemma}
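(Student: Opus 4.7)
The plan is to realize $G_{T_1,T_\ell}$ through Lemma~\ref{lem:epstein} applied to the chain associated with $F^k$, where $k=(\ell-1)s_1$ and $s_1$ is the common first-return time in the cascade (all of the $s_j$ coincide because the centrality condition $R_{T_1}({\bf c})\in T_{j+1}$ forces the first-return time to each $T_j$ to equal $s_1$). I would set up the pullback chain $\{G_s\}_{s=0}^k$ connecting $T_\ell$ and $T_1$ together with a thickened chain obtained from the outer neighborhood $(1+2C^{-1})T_1$; the $C$-nice property of $T_1$ guarantees that the thickened chain is well defined, and the hypothesis $|T_1|\le C\,|R^{\ell}_{T}(T_\ell)|$ provides the definite-expansion condition required by Lemma~\ref{lem:epstein}.

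The analysis then splits according to $\min(\ell,L-\ell)\le C$. If $\ell\le C$, then the cascade from $T_1$ down to $T_\ell=\mathcal L_{{\bf c}}^{\ell-1}(T_1)$ has at most $C-1$ steps, and Corollary~\ref{coro:FR} applies directly with $d=\ell-1$, yielding $G_{T_1,T_\ell}\in\mathcal{SE}(C'',1,8C{\bf N})$. If instead $L-\ell\le C$, the cascade between $T_1$ and $T_\ell$ is long and a direct invocation of Corollary~\ref{coro:FR} is insufficient, because its number of quadratic factors would grow with $\ell$. In this regime I would invoke Lemma~\ref{sn}: parts~(2) and~(3) furnish the per-stage displacement lower bound $|R_{T_1}(x)-x|\ge|T_1|/b$ on $T_{L'}$ and the Yoccoz spacings $|T_j\setminus T_{j+1}|\asymp|T_1|/\min(j,L-j)^2$, from which one deduces that successive pullbacks of $(1+2C^{-1})T_1$ through the saddle-node interior translate by a definite amount and hence have essentially disjoint intersections. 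Combined with Corollary~\ref{coro:FR} applied to the boundary block of length at most $C$ adjacent to $T_\ell$, this gives a multiplicity bound $M=M(C,\|F\|_{C^3})$ for the thickened chain, and then Lemma~\ref{lem:epstein} outputs $G_{T_1,T_\ell}\in\mathcal{SE}(C',1,2M{\bf N})$; enlarging $C'$ so that $2M\le 8C'$ completes the proof.

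The main obstacle is precisely the case $L-\ell\le C$. Naively the pullback chain meets a neighborhood of the critical point ${\bf c}$ at each of the $\ell$ cascade stages, which would let the multiplicity---and therefore the number of Epstein factors produced by Lemma~\ref{lem:epstein}---grow with $\ell$ and destroy the uniform bound. The saddle-node normal form supplied by Lemma~\ref{sn} is what rescues the argument: in Koenigs-type coordinates the return map $R_{T_1}$ restricted to the cascade is conjugate to a unit translation, so iterated pullbacks of the fixed outer thickening $(1+2C^{-1})T_1$ slide through the interior without accumulating. Quantifying this slide---converting the Yoccoz spacings into an explicit multiplicity bound depending only on $C$ and $\|F\|_{C^3}$---is where the real work lies.
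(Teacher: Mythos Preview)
Your case split and the treatment of the short case $\ell\le C$ via Corollary~\ref{coro:FR} match the paper. The gap is in the long case $L-\ell\le C$. You propose to apply Lemma~\ref{lem:epstein} to the full chain from $T_\ell$ to $T_1$, claiming the thickened chain has multiplicity bounded in terms of $C$ and $\|F\|_{C^3}$. This is false. Writing $u$ for the common return time so that $k=(\ell-1)u$, the chain satisfies $G_{ju}=T_{\ell-j}$ for $0\le j\le\ell-1$, and every one of these nested intervals contains ${\bf c}$. Hence the multiplicity of $\{G_s\}$---and \emph{a fortiori} of any thickening $\{G'_s\}$---at the point ${\bf c}$ is at least $\ell$, which is unbounded. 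Your ``sliding'' picture describes forward iterates of a point under the saddle-node return map, not pullbacks of the interval $T_1$; the latter are precisely the nested cascade intervals $T_j$.

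The paper circumvents this with a three-piece decomposition rather than a single application of Lemma~\ref{lem:epstein}. With $b=b(C^{-1},C,\|F\|_{C^3})$ from Lemma~\ref{sn}, the two end pieces $G_{T_1,T_b}$ and $G_{T_{\ell-b},T_\ell}$ have bounded depth and fall to Corollary~\ref{coro:FR}. For the middle, Lemma~\ref{sn}(2) forces $R_{T_1}^{b}(T_\ell)$ out of $T_{L'}$ into a component $J$ of $T_{\ell-b}\setminus T_{L'}$; since the saddle-node cascade has all critical points of $R_{T_b}$ inside $T_{L'}$, the long iterate $R_{T_1}^{\ell-2b}|_J$ is a \emph{diffeomorphism} onto a component $\widehat J$ of $T_b\setminus T_{L'-\ell+2b}$. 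The Yoccoz estimate (Lemma~\ref{sn}(3)) together with $L-\ell\le C$ gives $\widehat J$ definite extension space, so Koebe (via Lemma~\ref{lem:conformal}) places this middle piece in $\mathcal K(C'',1)$---a single Epstein factor. That is the missing idea: escape the critical nest first, so the long traversal becomes one diffeomorphic factor instead of $\ell$ quadratic ones.
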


\begin{proof}

Let $b=b(C^{-1},C,\|F\|_{C^3})$ and $L-b<L'\le L$ be as in Lemma~\ref{sn}. Since $T_1$ is $C$-nice and  $|T_1|\le C|R^{\ell}_T(T_\ell)|$, by Corollary~\ref{coro:FR},  if $\ell\le \max (3b,C)$ is not large,  then there exists a positive integer $C'>\max(3b,C)$ such that $G_{T_1,T_\ell} \in \mathcal{SE}(C',1,8\ell {\bf N}) \subset \mathcal{SE}(C',1,8C'{\bf N})$.\par
Now we suppose that $L\ge \ell>\max(3b,C)$. By Lemma~\ref{sn},  $T_b, T_{b+1},\cdots, T_{L'}$ is of  saddle-node type and  for any $x\in T_{L'}$, 
\[|R_{T_1}(x)-x|\ge \frac{1}{b}|T_1|.\]
Clearly, $x, R_{T_1}(x),\cdots, R^{b}_{T_1}(x)$ lie in order.  Thus, we have $R^{b}_{T_1}(T_\ell) \subset T_{\ell-b}\setminus T_{L'}$.  
Let $J$ be the component of $ T_{\ell-b}\setminus T_{L'}$ containing $R^{b}_{T_1}(T_\ell)$. Then $R^{\ell-2b}_{T_1}|_J$ maps $J$ diffeomorphicly onto a component $\widehat J$ of $T_b\setminus T_{L'-\ell+2b}$ since all the critical points of $R_{T_b}$ are contained in $T_{L'}$.  By Lemma~\ref{lem:conformal}, $(R^{\ell-2b}_{T_1}|_{J})^{-1}: \widehat J \to J$ can extend to a conformal mapping from $\C_{\widehat J}$ onto $\C_{J}$.
We shall prove that the diffeomorphism $R^{\ell-2b}_{T_1}|_{J}$ has uniformly bounded distortion, which implies $\gamma_1 \circ R^{\ell-2b}_{T_1}|_{J}\circ \gamma^{-1}_0 \in \mathcal{K}(C'',1)$ for some $C''>0$ where $\gamma_0:\overline{J}\to \I$ and $\gamma_1:\overline{\widehat J}\to \I$ are orientation preserving affine homeomorphisms.

Since $L'-\ell+2b\le L-\ell+2b\le C+2b$, by Lemma~\ref{sn} or Yoccoz's Lemma, there exists $C_1=C_1(C,b)>1$ such that
\[\frac{1}{C_1}|T_1|\le |T_{L'-\ell+2b-1}-T_{L'-\ell+2b}|.\]

By Corollary~\ref{coro:FR}, there exists $C'>1$ such that $G_{T_1,T_b}$  lies in a compact set $\mathcal{SE}(C',1,8b{\bf N})$ and $T_b$ is $C'$-nice. Thus we can extend $R^{\ell-2b}_{T_1}|_J$ to be a diffeomorphism onto a $C_2$-neighborhood of $\widehat J$
, where $C_2=\min(1/C',1/C_1)$.  By real Koebe principle, $R^{\ell-2b}_{T_1}|_J$ has uniformly bounded distortion.\par
Finally, since $G_{T_1,T_{b}}$ and $G_{T_{\ell-b},T_{\ell}}$ both lie in $\mathcal{SE}(C',1,8b{\bf N})$, we conclude

$$G_{T_1,T_\ell}=G_{T_1,T_N}\circ \gamma_1\circ R^{\ell-N'-N}_{T_1}\circ\gamma^{-1}_0\circ G_{T_{\ell-N'},T_{\ell}}$$ belongs to $\in \mathcal{SE}(C',1,8C'{\bf N})$ by enlarging $C'$.
\end{proof}

%
%
The following lemmas will play an important role in the proof of the Key Lemma.

\begin{Lemma}\label{saddlebound} Let ${\bf c} \in \mathrm{Crit}(F)$. Assume ${\bf c}\in T$ is an admissible interval with \[{|T|}<\Lambda{|\mathcal L_{\bf c}(I_{\infty})|}\] for some $\Lambda>0$ and ${\bf c_0}\in T'$ is a $k$-pullback of $T$ with $k\le {\bf N}p$.  Let $\{G_s\}_{s=0}^{k}$ be the chain from $T'$ to $T$.  If $G_{s_\ell}=T_1,T_2,\cdots,T_\ell=G_{s_1}$ is a central cascade in the decomposition of the chain $\{G_s\}_{s=0}^{k}$ with following properties:
\begin{enumerate}
\item if $T_1, T_2,\cdots, T_L$ is a maximal central cascade, then either $L=\ell$ or $T_{\ell+1} \neq G_s$ for all $0\le s<s_1$; 
\item there exists positive integer $C>1$ such that $T_1$ is $C$-nice with scaling factor $\lambda_{T_1}>1+2C^{-1}$ and \[{|T_1|}<C{|F^{s_\ell}(I_{\infty})|}, \]
\item either  ${\bf c_0}\notin G_s$ for $s_1\le s \le s_\ell$ or $T_1,T_2,\cdots,T_\ell$ is a cascade with respect to ${\bf c_0}$,
\end{enumerate}
then there exists a positive integer $C'=C'(\Lambda,C,\|F\|_{C^3})$  such that 
\begin{itemize}
\item $T_\ell$ is $C'$-nice with $\lambda_{T_\ell} >1+2C'^{-1}$;
\item $|T_\ell|<C'|F^{s_1}(I_{\infty})|$;
\item  $G_{T_1,T_\ell} \in \mathcal{SE}(C',1,8C'{\bf N})$. 
\end{itemize}
\end{Lemma}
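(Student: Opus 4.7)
The plan is to extend Lemma~\ref{saddle-node} to handle a central cascade that may be cut short by the chain at an arbitrary depth $\ell$. First, I would verify the hypotheses of Lemma~\ref{sn} for the ambient maximal cascade $T_1\supset\cdots\supset T_L$: the $C$-niceness and $\lambda_{T_1}>1+2C^{-1}$ are given by condition~(2), while the limit scaling factor $\Lambda_{T_1}$ can be bounded by some $\rho=\rho(\Lambda,C,\|F\|_{C^3})$, using that $T_1$ is a pullback of $T$ along a chain of order $\le {\bf N}$, together with the hypothesis $|T|<\Lambda|\mathcal L_{\bf c}(I_{\infty})|$ and the real bounds of Theorem~\ref{realbounds} and Lemma~\ref{nice}. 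Lemma~\ref{sn} then yields the saddle-node decomposition $T_b\supset\cdots\supset T_{L'}$ and the Yoccoz estimate $|T_j\setminus T_{j+1}|/|T_1|\asymp 1/\min(j,L-j)^2$.

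Next, I would exploit the size hypothesis $|T_1|<C|F^{s_\ell}(I_{\infty})|$ to locate $F^{s_\ell}(I_{\infty})$ at bounded depth near one endpoint of the cascade: since this interval sits inside $T_1$ but has at least a definite fraction of its size, the Yoccoz estimate forces it to escape $T_{j_0}$ for some $j_0=j_0(C)$. Using the combinatorial condition~(1) -- the chain does not re-enter $T_{\ell+1}$ -- together with the observation that the cascade return map $R^{\ell-1}_{T_1}$ carries $F^{s_1}(I_\infty)\subset T_\ell$ to $F^{s_\ell}(I_\infty)\subset T_1$, I would propagate this bounded-depth property from the top of the cascade down to the bottom at $T_\ell$. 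This reduces the situation to one in which Lemma~\ref{saddle-node} applies (possibly after enlarging constants), yielding $G_{T_1,T_\ell}\in\mathcal{SE}(C',1,8C'{\bf N})$.

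The bound $|T_\ell|<C'|F^{s_1}(I_\infty)|$ then follows by pulling back $|T_1|<C|F^{s_\ell}(I_\infty)|$ through the branch of $R^{-(\ell-1)}_{T_1}$ that carries $F^{s_\ell}(I_\infty)$ to $F^{s_1}(I_\infty)$; bounded distortion here is supplied by Lemma~\ref{lem:conformal} (for the conformal extension across the saddle-node annuli) combined with real Koebe, just as in the proof of Lemma~\ref{saddle-node}. $C'$-niceness of $T_\ell$ follows from the compactness argument already used in the proof of Lemma~\ref{FR}: since $T_1$ is $C$-nice and the transition map lies in a compact Epstein class, niceness transfers to $T_\ell$ up to a controlled constant. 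Finally, $\lambda_{T_\ell}>1+2C'^{-1}$ follows from Lemma~\ref{4.1}, inductively applied along the pullback chain $T_1\supset T_2\supset\cdots\supset T_\ell$ starting from the seed estimate $\lambda_{T_1}>1+2C^{-1}$ given by~(2).

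The main obstacle is the combinatorial step in paragraph~2: precisely translating condition~(1), that $T_{\ell+1}$ does not reappear in the chain before position $s_1$, into a quantitative statement that $F^{s_1}(I_\infty)$ occupies a definite fraction of $T_\ell$. Condition~(3) forces a case split according to whether the cascade is disjoint from ${\bf c_0}$ or centered at ${\bf c_0}$; in the centered case the $F$-periodic structure of $I_\infty$ has to be used to rule out the scenario in which $F^{s_1}(I_\infty)$ becomes microscopic inside a macroscopic $T_\ell$, while in the non-centered case the first-return map $R_{T_1}$ restricted to the orbit of $I_\infty$ is a diffeomorphism and the argument reduces to a direct application of real Koebe.
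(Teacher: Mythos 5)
Your overall architecture agrees with the paper's: bound the limit scaling factor $\Lambda_{T_1}$, show $Q:=\min(\ell,L-\ell)$ is uniformly bounded, feed this into Lemma~\ref{saddle-node}, and get the niceness and size estimates for $T_\ell$ by the compactness arguments of Lemma~\ref{FR}. But there are two genuine gaps.

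The first is the step you yourself flag as ``the main obstacle'': bounding $Q$ when $F^{s_1}(I_\infty)$ lands at bounded depth below $T_\ell$ (i.e.\ $F^{s_1}(I_\infty)\subset T_{\ell+m}\setminus T_{\ell+m+1}$ with $m$ already small). Your Yoccoz argument handles the cases where $F^{s_1}(I_\infty)$ is either very deep (in $T_{L'}$) or at depth $m$ large but short of $L'$; there one can trace the annulus $T_{\ell+m}\setminus T_{\ell+m+1}$ under $R_{T_1}^{\ell-1}$ and use $|F^{s_\ell}(I_\infty)|>C^{-1}|T_1|$ to conclude. When $m$ is bounded, Yoccoz gives no contradiction, and your proposed resolution via a case-split on condition (3) is not a working argument: neither ``periodicity of $I_\infty$ rules out microscopic $F^{s_1}(I_\infty)$'' nor ``the first-return map on the orbit of $I_\infty$ is a diffeomorphism plus real Koebe'' engages condition (1). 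The paper's Case~3 instead introduces $r$, the maximal integer in $[s_1, s_1+{\bf N}p)$ with $F^r(I_\infty)\subset T_1$, proves the claim $F^r(I_\infty)\subset T_1\setminus T_2$ (this is precisely where both conditions (1) and (3) enter, ruling out $F^r(I_\infty)\subset T_2$), and then passes to the first-return domain $D\ni F^r(I_\infty)$ of $T_1$; a non-deepness assertion for $\mathcal L_{F^r(I_\infty)}(D)$ inside $D$, obtained from Lemma~\ref{4.1} and the hypothesis $|T|<\Lambda|\mathcal L_{\bf c}(I_\infty)|$, combined with the Yoccoz estimate on $|T_{\ell+m}\setminus T_{\ell+m+1}|/|T_1|\asymp 1/Q^2$ and the distortion Lemma~\ref{distortion}, forces $Q$ bounded. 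Your sketch does not contain this mechanism.

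The second gap is the proposed derivation of $\lambda_{T_\ell}>1+2C'^{-1}$ by ``inductively applying Lemma~\ref{4.1} along $T_1\supset T_2\supset\cdots\supset T_\ell$.'' This cannot work: in a long saddle-node cascade the annuli $T_j\setminus T_{j+1}$ shrink quadratically in $j$ (Yoccoz), so a definite gap at $T_1/T_2$ does not propagate to a definite gap at $T_\ell/T_{\ell+1}$ in the middle of the cascade. Moreover, Lemma~\ref{4.1} propagates depth \emph{to} $\mathcal L_{\bf c_0}(\cdot)$, not down an arbitrary pullback chain in the way your induction requires. The correct route, as in the paper, is to invoke the Yoccoz inequality of Lemma~\ref{sn}(3) only \emph{after} $Q$ has been shown to be bounded: with $Q$ bounded, $|T_\ell\setminus T_{\ell+1}|/|T_1|\gtrsim Q^{-2}$ is bounded below, and since $|T_\ell|/|T_1|$ is bounded, this pins $\lambda_{T_\ell}$ away from $1$. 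So the lower bound on $\lambda_{T_\ell}$ is a consequence of the bound on $Q$, not an independent induction.
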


\begin{proof}
First, we prove there exists $\rho=\rho(\Lambda,\|F\|_{C^3})$ such that the limit scaling factor $\Lambda_{T_1}$ of $T_1$ is less than $\rho$.
 Fix some constant $\theta>\Lambda$ and let $\xi=\xi(\theta,\|F\|_{C^3})$ be given by Lemma~\ref{4.1}.  We claim that $\Lambda_{T_1} \le \rho:=2\xi+1$. For otherwise, there exists a critical point ${\bf c'}$ and an  admissible interval $I\ni {\bf c'}$ such that $I\supset (1+2\xi)\mathcal L_{\bf c'}(I)$. By Lemma~\ref{4.1}, 
\[\Lambda \mathcal L_{\bf c}(I_{\infty})\subset (1+2\theta)\mathcal L_{\bf c}(I_{\infty}) \subset  (1+2\theta)\mathcal L_{\bf c}(\mathcal L_{\bf c'}(I)) \subset\mathcal L_{\bf c}(I) \subset T,\]
which contradicts with the assumption.\par 
Let us now consider the maximal central cascade $T_1,T_2,\cdots,T_L$. We shall prove  there exists $q=q(\Lambda,C,\|F\|_{C^3})$ such that $Q:=\min (\ell,L-\ell) \le q$.  \par
Set $J_i:=F^i(I_{\infty})$ for all $i\ge 1$.  Let $b=b(C^{-1}, \rho,\|F\|_{C^3})$, $L-b<L'\le L$ and $\eta=\eta(C^{-1},\rho,\|F\|_{C^3})$ be given by Lemma~\ref{sn}. Without loss of generality, we may assume $L> \ell>10b$. \par

{\bf Case 1.} $J_{s_1}\subset T_{L'}$. It follows from Lemma~\ref{sn} that $T_b,T_{b+1},\cdots, T_{L'}$ is a central cascade of saddle-node type and for any $x\in T_{L'}$, $$|R_{T_1}(x)-x|>\frac{1}{b}|T_1|.$$ 
Note that $R^b_{T_1}(x) \notin T_{L'}$ for any $x\in J_{s_1}$. Indeed, for such $x\in J_{s_1}\subset T_{L'}$, $ x, R_{T_1}(x), \cdots R^{b}_{T_1}(x)$ lie in order.  So if $R^{b}_{T_1}(x) \in T_{L'}$, then $|R^{b}_{T_1}(x)-x|\ge N'\times \frac{1}{b}|T_1|=|T_1|$, which is a contradiction.
Thus we obtain 
\begin{align*}
J_{s_\ell}=R_{T_1}^{\ell-1}(J_{s_1})&=R_{T_1}^{\ell-1-b}\circ R_{T_1}^{b}(J_{s_1})\\ 
&\subset R_{T_1}^{\ell-1-b}(T_{L'-b}\setminus T_{L'})=T_{L'-\ell+1}\setminus T_{L'-\ell+1+b}.
\end{align*}
Hence, 
\[C^{-1}<\frac{|J_{s_\ell}|}{|T_1|}\le\frac{|T_{L'-\ell+1}\setminus T_{L'-\ell+1+b}|}{|T_1|}. \]
By Lemma~\ref{sn}, we have

$$\frac{|T_{L'-\ell+1}\setminus T_{L'-\ell+1+b}|}{|T_1|} \asymp \frac{b\eta}{Q^2}.$$
So $Q$ is bounded in terms of $C$ and $\|F\|_{C^3}$. 

{\bf Case 2.} Set $M=\max\{2b, [\sqrt{C\eta}]+1\}$. If $J_{s_1} \subset T_{\ell+m}\setminus T_{\ell+m+1}$ for some $m>M$ with $\ell+m<L'$, then $J_{s_\ell}=R_{T_1}^{\ell-1}(J_{s_1}) \subset T_{m+1}\setminus T_{m+2}$. It follows from Lemma~\ref{sn}, that
\[C^{-1}<\frac{|J_{s_\ell}|}{|T_1|}\le \frac{|T_{m+1}\setminus T_{m+2}|}{|T_1|}\le \max \{\frac{\eta}{(m+1)^2}, \frac{\eta}{(L-m-1)^2}\}.\]
As $\displaystyle{\frac{\eta}{(m+1)^2} \le \frac{\eta}{M^2}<C^{-1}}$, we have $(L-m-1)^2<C\eta$, and then
\[\ell\le L-m<\sqrt{C\eta}+1.\]

{\bf Case 3.} Now suppose $J_{s_1} \subset T_{\ell+m}\setminus T_{\ell+m+1}$ for some $m\le M$.  Take a maximal integer $s_1\le r<s_1+{\bf N}p$ such that $J_r\subset T_1$, where $p$ is the renormalization period of $f$. Such an $r$ exists since $J_{s_\ell} \subset T_1$. Clearly, $R_{T_1}(J_r)=J_{s_1+{\bf N}p}
=J_{s_1}$.

\medskip
{\bf Claim.} $J_r \subset T_1\setminus T_2$.

\medskip
Let $u=s_\ell-s_{\ell-1}$, then $R_{T_1}|_{T_2}=F^u|_{T_2}$. If the claim fails, i.e., $J_r \subset T_2=G_{s_{\ell-1}}$, then $T_{\ell+1}$ is the component of $F^{-u}(T_\ell)$ containing $J_r$ and $u+r={\bf N}p+s_1$. If $u\le s_1$, then $G_{s_1-u}$ is the component of $F^{-u}(T_{\ell})$ containing $J_r$. This implies $T_{\ell}=G_{s_1-u}$, which contradicts with condition (1). Thus, $u>s_1$. However, it is also impossible. Indeed, if $u>s_1$, then $r={\bf N}p+s_1-u<{\bf N}p$. Then $G_{s_{\ell-1}+{\bf N}p-r} \supset J_{{\bf N}p} \ni {\bf c_0}$ since $J_r \subset G_{s_{\ell}-1}$. A direct computation shows that
\[s_{\ell-1}+{\bf N}p-r=s_{\ell-1}+s_1-u=s_{\ell-1}-s_1-(s_{\ell}-s_{\ell-1})=s_1-s_\ell \in [0,s_1] \cap \mathbb N.\]
By condition (3), $T_1,T_2,\cdots,T_{\ell}$ should be a central cascade with respect to ${\bf c_0}$. Then by the definition of $r$, $r\ge {\bf N}p$ since $J_{{\bf N}p} \subset T_1$. This is a contradiction, as  $u+r={\bf N}p+s_1$ and $u>s_1$. Hence, the claim follows.

\medskip

Let $D\supset J_r$ be the component of the  first return domain to $T_1$ and let $F^v|_D:D\to T_1$ be the corresponding return map. By the definition of $r$, we have $v={\bf N}p+s_1-r$.

\par

Set $D':=(F^v|_D)^{-1}(T_{\ell+m}\setminus T_{\ell+m+1})$, by the Markov property, $F^v(\mathcal L_{J_r}(D))$ must be contained in $T_{\ell+m}\setminus T_{\ell+m+1}$.
 It follows $D'\supset \mathcal L_{J_r}(D) $. We claim $(1+2\xi)\mathcal L_{J_r}(D) \not\subset D$. For otherwise, by Lemma~\ref{4.1}, $\mathcal L_{\bf c_0}(D) \supset (1+2\theta) \mathcal L_{\bf c_0}(\mathcal L_{J_r}(D))$. So
\[\frac{1}{\Lambda}\le\frac{|I_{\infty}|}{|T|}\le \frac{| \mathcal L_{\bf c_0}(\mathcal L_{J_r}(D))|}{|\mathcal L_{\bf c_0}(D)|}<\frac{1}{1+2\theta}<\frac{1}{2\Lambda}.\]
This is absurd. By Lemma~\ref{sn}, we have
\[\frac{|T_{\ell+m}\setminus T_{\ell+m+1}|}{|T_1|} \asymp \frac{1}{(\min \{\ell,L-\ell\})^2}=\frac{1}{Q^2}.\]
It follows from Lemma~\ref{distortion} that there exists $\delta_1=\delta(Q)$ such that $(1+2\delta_1)\mathcal L_{J_r}(D) \subset D$ and $\delta_1(Q) \to \infty$ as $Q\to \infty$. As $\delta_1 \le \xi$, there exists $q=q(\xi,C,\|F\|_{C^3})=q(\Lambda,C,\|F\|_{C^3})$ such that $\min (\ell,L-\ell)=Q\le q$. Thus we are done.\par
Hence, $T_1$ satisfies the following properties:
\begin{enumerate}
\item $T_1$ is $C$-nice with limit scaling factor $\Lambda_1\le \rho$;
\item $T_1\supset (1+2C^{-1})T_2$;
\item $C|R^{\ell}_{T_1}(T_\ell)|\ge C|F^{s_\ell}(I_{\infty})|>|T_1|$;
\item $Q=\min(\ell,L-\ell)\le q$.
\end{enumerate}
By Lemma~\ref{saddle-node}, there exists a positive integer $C'>1$ such that $G_{T_1,T_\ell} \in \mathcal{SE}(C',1,8C'{\bf N})$.  As ${|T_1|}<C{|F^{s_\ell}(I_{\infty})|}$ and $T_1$ is $C$-nice, it follows easily there exists $C''=C''(\Lambda,C,\|F\|_{C^3})$ such that ${|T_\ell|}<C''{|F^{s_1}(I_{\infty})|}$  and $T_\ell$ is $C''$-nice from the fact that $G_{T_1,T_\ell}$ lies in a compact set. The argument is similar to that we used in the proof of Lemma~\ref{FR}. We only prove ${|T_\ell|}<C''{|F^{s_1}(I_{\infty})|}$. Indeed, if there does not exist such $C''$, then there exists a sequence $(F_i)$ of extended maps with following properties:
\begin{itemize}
\item $\{ G_{i}:=G_{T_1(F_i),T_{\ell_i}(F_i)}\}$ converges uniformly to some map $G$;
\item there exist $x_1(i),x_2(i) \in F^{s_{\ell_i}}_i(I_{\infty}(F_i))$  such that $\widehat x_1(i)-\widehat x_2(i) \to 0$ as $i\to \infty$, where $\widehat x_1(i)=\mathcal A_{T_{\ell_i}(F_i)}(x_1(i))$ and $\widehat x_2(i)=\mathcal A_{T_{\ell_i}(F_i)}(x_2(i))$;
\item $|F^{s_{\ell_i}-s_{1_i}}_i(x_1(i))-F^{s_{\ell_i}-s_{1_i}}_i(x_2(i))|=|F^{s_{1_i}}_i(I_{\infty}(F_i))|>C^{-1}|T_1(F_i)|$, in other words, $|G_i(\widehat x_1(i))-G_i(\widehat x_2(i))|>C^{-1}$.
\end{itemize}
Without loss of generality, we can assume $\widehat x_1(i)\to \widehat x \in [-1,1]$. Since $|G_i(\widehat x_1(i))-G_i(\widehat x_2(i))|>C^{-1}$, take a limit, we obtain $0=|G(\widehat x)-G(\widehat x)|>C^{-1}$. This is absurd.\par
As $Q=\min(\ell,L-\ell)$ is uniformly bounded, by (3) of Lemma~\ref{sn}, $\lambda_{T_\ell}$ is uniformly  bounded both below and above. Hence, enlarge $C'$ to be large enough, then the conclusions of this lemma follow.

\end{proof}

\begin{Lemma}[Long transition maps]\label{long}
Assume ${\bf c}\in T$ is an admissible interval with \[{|T|}<\Lambda{|\mathcal L_{\bf c}(I_{\infty})|}\]
for some $\Lambda>0$ and $T'\ni{\bf c_0}$ is a $k$-pullback of $T$ with $k\le {\bf N}p$.  If there exists a positive integer $C>1$ such that
\begin{enumerate}
\item$T$ is $C$-nice with $\lambda_T>1+2C^{-1}$;
\item $|T|<C|F^k(I_{\infty})|$. 
\end{enumerate}
Then there exists a positive integer $C'=C'(\Lambda,C,\|F\|_{C^3})>1$ such that $G_{T,T'} \in \mathcal{SE}(C',1,8C'{\bf N})$.
\end{Lemma}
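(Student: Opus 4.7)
The plan is to decompose the chain $\{G_s\}_{s=0}^{k}$ from $T'$ to $T$ according to its critical visits, and then bound each piece by the tools already available. Write $1\le t_1<\cdots<t_N\le k-1$ for the indices with $G_{t_i}\cap\mathrm{Crit}(F)\neq\emptyset$. A bounded-order result for chains of pullbacks of admissible intervals (e.g.\ \cite[Lemma~3.8]{C2Density}) makes $N$ uniformly bounded in terms of ${\bf N}$ alone. After grouping those visits into maximal central cascades inside the chain, I obtain a canonical factorization
\begin{equation*}
G_{T,T'}=\Psi_{m}\circ\Phi_{m}\circ\Psi_{m-1}\circ\Phi_{m-1}\circ\cdots\circ\Phi_{1}\circ\Psi_{0},
\end{equation*}
in which every $\Phi_j$ is the transition map of a maximal central cascade in the chain and every $\Psi_j$ is a short (diffeomorphic) transition map between consecutive cascades; the number of factors is bounded in terms of ${\bf N}$.

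For the cascade factors I would argue by induction, feeding the output of one application of Lemma~\ref{saddlebound} into the next. The outer end of $\Phi_1$ is attached to $T$, which by hypothesis is $C$-nice with $\lambda_T>1+2C^{-1}$ and $|T|<C|F^k(I_\infty)|$; so Lemma~\ref{saddlebound} applies with the given $\Lambda$ and $C$. It certifies $\Phi_1\in\mathcal{SE}(C',1,8C'{\bf N})$ and also produces the inner end of the cascade as a new admissible interval which is $C'$-nice, has scaling factor $>1+2C'^{-1}$, and satisfies the analogous length relation $|\cdot|<C'|F^{\cdot}(I_\infty)|$. These are exactly the input hypotheses for the next cascade, and since the number of cascades is bounded in terms of ${\bf N}$, the constants do not blow up.

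Between consecutive cascades the chain contains no critical points, so the corresponding $\Psi_j$ is the transition map of a diffeomorphic subchain whose two endpoints are both $C'$-nice admissible intervals. The $C'$-nice property supplies a definite Koebe neighborhood along the subchain, and Lemma~\ref{lem:epstein} then places $\Psi_j$ in some $\mathcal{SE}(C'',1,2M{\bf N})$ with $M$ uniformly bounded. Composing the uniformly bounded number of factors, each lying in a common $\mathcal{SE}(C'',1,8C''{\bf N})$, yields $G_{T,T'}\in\mathcal{SE}(C''',1,8C'''{\bf N})$ for some $C'''=C'''(\Lambda,C,\|F\|_{C^3})$, which is the desired conclusion.

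The main obstacle is the combinatorial bookkeeping required to set up the induction. One must verify at each step that the structural hypotheses of Lemma~\ref{saddlebound} continue to hold; most delicately, one has to check the maximality alternative~(1) and the dichotomy~(3) about whether ${\bf c_0}$ appears inside the cascade. This requires tracking the orbit of ${\bf c_0}$ along the chain $\{G_s\}$ and using the fact that the whole pullback has length at most ${\bf N}p$. Once this organization is in place, the analytic content is entirely encapsulated in Lemma~\ref{saddlebound} and Lemma~\ref{lem:epstein}.
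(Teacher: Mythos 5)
Your proposal has a genuine structural gap, and also a factual misstatement that masks the difficulty the paper has to face.

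First, the misstatement: you write that $N$, the number of indices $t_i$ with $G_{t_i}\cap\mathrm{Crit}(F)\neq\emptyset$, is bounded in terms of ${\bf N}$ by a bounded-order result such as \cite[Lemma~3.8]{C2Density}. That lemma controls the order of the chain arising from a \emph{single first return}, as used in Lemma~\ref{FR}; it does not apply here. In the present lemma the pullback depth $k\le {\bf N}p$ grows with the renormalization period $p$, which is unbounded, and the chain can pass through the same critical point arbitrarily many times. Only the number of \emph{distinct} critical points visited is bounded by ${\bf N}$; the multiplicity of visits is not, and handling that unbounded multiplicity is precisely what the cascade analysis of Lemma~\ref{sn} and Lemma~\ref{saddle-node} is for. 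If $N$ really were bounded you would not need cascades at all.

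Second, and more seriously, the flat factorization $G_{T,T'}=\Psi_m\circ\Phi_m\circ\cdots\circ\Phi_1\circ\Psi_0$ with each $\Psi_j$ a diffeomorphic transition is not available. A central cascade is defined with respect to a fixed critical point ${\bf c}$, and the ${\bf c}$-visits in the chain are nested intervals around ${\bf c}$; but between two consecutive ${\bf c}$-visits (and hence between two consecutive cascades in your factorization) the chain is free to visit \emph{other} critical points, so the pieces $\Psi_j$ need not be diffeomorphic subchains. There is no way to linearize this into a sequence of cascades separated by critical-point-free gaps, because cascades at different critical points interleave. The paper resolves this with a genuinely nested induction: Statement~$(M)$ is parametrized by the number of \emph{distinct} critical points $\#\bigl(\mathrm{Crit}(F)\cap\bigcup_{s=y'+1}^{y-1}G_s\bigr)\le M\le{\bf N}$. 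In the inductive step one fixes the critical point ${\bf c_0}$ (or, if ${\bf c_0}$ is absent, another fixed ${\bf c}$), decomposes only the ${\bf c_0}$-visits into maximal cascades via Lemma~\ref{saddlebound}, and then applies Statement~$(M-1)$ recursively to the inter-cascade transitions, which may still carry other critical points but no longer carry ${\bf c_0}$. Your left-to-right "feeding one output into the next" captures the cascade-to-cascade propagation within one level, but it omits the recursion into the inter-cascade pieces, which is where the real combinatorial content lies and where the bound $M\le{\bf N}$ keeps the induction finite. Without that nested structure the argument does not close.
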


\begin{proof}
First, a similar argument in the proof of Lemma~\ref{saddlebound} shows that  there exists $\rho=\rho(\Lambda,\|F\|_{C^3})$ such that the limit scaling factor $\Lambda_{T}$ of $T$ is less than $\rho$.

Let $\{G_s\}_{s=0}^k$ be the chain from $G_0=T'$ to $G_k=T$. 

\medskip
{\bf Statement ($M$).}{\em Let $Y=G_{y},Y'= G_{y'}$ $(0\le y'<y\le k)$ be two symmetric admissible intervals in the chain such that 
\begin{enumerate}
\item $Y$ is $C$-nice with $\lambda_{Y}>1+2C^{-1}$;
\item $|Y|<C|F^{y}(I_{\infty})|$.
\end{enumerate}
If $\# (\mathrm{Crit}(F) \bigcap (\bigcup\limits_{s=y'+1}^{y-1} G_s)\le M$, then there exists a positive integer $C'=C'(\Lambda, C,\|F\|_{C^3})>1$ such that
\begin{itemize}
\item $Y'$ is $C'$-nice with $\lambda_{Y'}>1+2C'^{-1}$;
\item $|Y'|<C'|F^{y'}(I_{\infty})|$;
\item $G_{Y,Y'}\in \mathcal{SE}(C',1,8C'{\bf N})$. 
\end{itemize} }

\medskip

{\em Proof of Statement $(0)$.} By Lemma~\ref{lem:conformal}, the diffeomorphism $$F^{-(y-y'-1)}:F^{y-y'}(Y')\to F(Y')$$ can extend to a conformal map from $\C_{F^{y-y'}(Y')}$ into $\C_{ F(Y')}$. Since $Y$ is $C$-nice, $F^{y-y'-1}|_{F(Y')}$ can be extended to a diffeomorphism onto $(1+2C^{-1})Y$. By Koebe's distortion theorem, $F^{y-y'-1}|_{F(Y')}$ has uniformly bounded distortion, which implies that $G_{Y,Y'} \in \mathcal{SE}(C',1,2)$ for some $C'>0$. Then it follows easily $|Y'|<C'|F^{y'}(I_{\infty})|$ by the compactness of $\mathcal{SE}(C',1,2)$.\par
Now we prove there exists $C''>0$ such that $\lambda_{Y'}>1+2C''^{-1}$.  Assume the critical point contained in $Y'$ is ${\bf c'}$. Let  $B'=\mathcal L_{\bf c'}(Y')$  and $r_{B'}$ be the return time from $B'$ to $Y'$, then $r_{B'}>y-y'$. For otherwise, $G_{y'+r_{B'}}\supset Y' $ contains a critical point, a contradiction. Thus, $F^{y-y'}(B')$ lies in a first return component $B$ to $Y$. Since $\lambda_Y>1+2C^{-1}$ and $\Lambda_Y\le \Lambda_T<\rho$ , we have $$\rho^{-1}|Y|<|\mathcal L_{\bf \hat c}(Y)|<\frac{C}{C+2}|Y|,$$  where ${\bf \hat c}$ is the critical point in $Y$. Thus, there exists $\widehat C>0$ such that $\widehat C^{-1}|Y|<|B|<(1-\widehat C^{-1})|Y|$. As $F^{y-y'-1}|_{F(Y')}$ has uniformly bounded distortion, there exists $C''>0$ such that $(1+2C''^{-1})|B'|<|Y'|$.\par
Enlarge $C'$ so that $C'>C''$, the conclusion of Statement (0) follows.

\medskip
{\em Statement $(M-1)$ $\Rightarrow$ Statement $(M)$.}

{\bf Case 1.} ${\bf c_0}\in \bigcup\limits_{s=y'+1}^{y-1} G_s$. Let $y'<v<y$ and $y'<v'<y$ be the largest and smallest integer such that ${\bf c_0}\in G_{v'} \subset G_v$ respectively.  Consider the canonical decomposition of $G_{G_v,G_{v'}}$ with respect to ${\bf c_0}$, that is, \[G_v=T^1_1\Supset \cdots \Supset T^1_{L_1} \Supset T^2_1\cdots\Supset T^2_{L_2}\Supset\cdots\Supset T^\ell_1\Supset\cdots\Supset T^\ell_{L_\ell}=G_{v'},\]
 where $T^j_1,\cdots,T^j_{L_j}$ is a central cascade satisfying condition (1) in Lemma~\ref{saddlebound}, $j=1,\cdots,\ell$. 
As $|T|<\Lambda|\mathcal L_{\bf c}(I_{\infty})|$, by Theorem~\ref{realbounds} and Lemma~\ref{4.1}, we know $\ell$ is bounded in terms of $\Lambda$ and $\|F\|_{C^3}$.
Let  $s^{j}_i$ be the integer such that $T^{j}_i=G_{s^j_i}$ for all $1\le j\le \ell$, $1\le i\le L_j$. By Statement $(0)$,  there exists a positive integer $\widetilde C_1>1$ such that
\begin{itemize}
\item[P1(1).] $T^1_{1}$ is $\widetilde C_1$-nice with $\lambda_{T^1_1}>1+2\widetilde C_1^{-1}$;
\item[P2(1).] $|T^1_1|<\widetilde C_1|F^{s^1_{1}}(I_{\infty})|$;
\item[P3(1).]  $G_{Y,G_v} \in \mathcal{SE}(\widetilde C_1, 1, 8\widetilde C_1{\bf N})$.
\end{itemize}
 It follows from Lemma~\ref{saddlebound} that 

\begin{itemize}
\item[H1(1).] $T^1_{L_1}$ is $\widehat C_1$-nice with $\lambda_{T^1_{L_1}}>1+2\widehat C_1^{-1}$;
\item[H2(1).] $|T^1_{L_1}|<\widehat C_1|F^{s^1_{L_1}}(I_{\infty})|$;
\item[H3(1).]  $G_{T^1_1,T^1_{L_1}} \in \mathcal{SE}(\widehat C_1,1,8\widehat C_1{\bf N})$.
\end{itemize}
By Statement $(M-1)$, there exists a positive integer $\widetilde C_2>1$ such that
\begin{itemize}
\item[P1(2).] $T^2_{1}$ is $\widetilde C_2$-nice with $\lambda_{T^2_1}>1+2\widetilde C_2^{-1}$;
\item[P2(2).] $|T^2_1|<\widetilde C_2|F^{s^2_{1}}(I_{\infty})|$;
\item[P3(2).]  $G_{T^1_{L_1},T^2_1} \in \mathcal{SE}(\widetilde C_2,1,8\widetilde C_2{\bf N})$.
\end{itemize}
By induction, we can obtain two finite sequences of positive integers $\{\widetilde C_j\}_{j=2}^{\ell}$ and  $\{\widehat C_j\}_{j=2}^{\ell}$
such that for all $2\le j\le \ell$, the following holds.
\begin{itemize}
\item[P1($j$).] $T^{j}_{1}$ is $\widetilde C_{j}$-nice with $\lambda_{T^{j}_1}>1+2\widetilde C_{j}^{-1}$;
\item[P2($j$).] $|T^{j}_1|<\widetilde C_{j}|F^{s^{j}_{1}}(I_{\infty})|$;
\item[P3($j$).]  $G_{T^{j-1}_{L_1},T^{j}_1} \in \mathcal{SE}(\widetilde C_{j},1,8\widetilde C_{j}{\bf N})$;
\item[H1($j$).] $T^j_{L_j}$ is $\widehat C_j$-nice with $\lambda_{T^j_{L_j}}>1+2\widehat C_j^{-1}$;
\item[H2($j$).] $|T^j_{L_j}|<\widehat C_j|F^{s^j_{L_j}}(I_{\infty})|$;
\item[H3($j$).]  $G_{T^j_1,T^j_{L_j}} \in \mathcal{SE}(\widehat C_j,1,8\widehat C_j{\bf N})$.
\end{itemize}
Consider the transition map $G_{G_{v'},Y'}$, by Statement $(M-1)$, there exists a positive integer $\widetilde C_{\ell+1}$ such that
\begin{itemize}
\item $Y'$ is $\widetilde C_{\ell+1}$-nice with $\lambda_{Y'}>1+2\widetilde C_{\ell+1}^{-1}$;
\item $|Y|<\widetilde C_{\ell+1}|F^{y'}(I_{\infty})|$;
\item  $G_{G_{v'},Y'} \in \mathcal{SE}(\widetilde C_{\ell+1}, 1, 8\widetilde C_{\ell+1}{\bf N})$.
\end{itemize}

Let $C'=\widetilde C_{\ell+1}+\Sigma_{j=1}^{\ell}(\widehat C_j+\widetilde C_{j})$, then the conclusions of Statement $(M)$ follow.

{\bf Case 2.} ${\bf c_0}\notin \bigcup\limits_{s=y'+1}^{y-1} G_s$. Let $y'<v<y$ be the largest integer such that $\mathrm{Crit}(F)\cap G_v\ne \emptyset$. Assume the critical point lying in $G_v$ is ${\bf c}$ and let $y'\le v'\le v$ be the smallest integer such that $G_{v'} \ni {\bf c}$. Consider the canonical decomposition of $G_{G_v,G_{v'}}$ with respect to ${\bf c}$, that is, \[G_v=T^1_1\Supset \cdots \Supset T^1_{L_1} \Supset T^2_1\cdots\Supset T^2_{L_2}\Supset\cdots\Supset T^\ell_1\Supset\cdots\Supset T^\ell_{L_\ell}=G_{v'},\]
 where $T^j_1,\cdots,T^j_{L_j}$ is a central cascade satisfying condition (1) in Lemma~\ref{saddlebound}, $j=1,\cdots,\ell$.  Then the proof is essentially the same as that of Case 1.
\end{proof}

\begin{Lemma}\label{compactness} Let $T$ be a component of $\mathfrak E_0$ and $T'\ni {\bf c_0}$ be an $({\bf N}p-b)$-pullback of $T$ for some $0\le b\le {\bf N}p$. If there exists $\Lambda>0$ such that $|I_0|<\Lambda|I_{\infty}|$, then there exists a positive integer $C'=C'(\Lambda,b,\|F\|_{C_3})$ such that $G_{T,T'} \in \mathcal{SE}(C',1,8C'{\bf N})$.
\end{Lemma}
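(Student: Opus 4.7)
The approach is to reduce Lemma~\ref{compactness} to Lemma~\ref{long} by verifying the latter's hypotheses for $T$. First, I would control the geometry of $T$: using $|I_0| < \Lambda |I_\infty|$ together with Lemma~\ref{4.1}, the limit scaling factor $\Lambda_{I_0}$ is bounded by some $\rho = \rho(\Lambda, \|F\|_{C^3})$; Lemma~\ref{nice} then yields that $I_0$ is $C_0$-nice with $\lambda_{I_0} > 1 + 2 C_0^{-1}$ for some $C_0 = C_0(\Lambda, \|F\|_{C^3})$. When $T$ is another component of $\mathfrak{E}_0$, the positions of the boundary points $\pm \alpha, \pm \beta$ are controlled in terms of $\Lambda$ and $\|F\|_{C^3}$, and niceness of $T$ is automatic since $T \cap \omega({\bf c_0}) = \emptyset$ (as $\omega({\bf c_0}) \subset I_\infty \subset I_0$).

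Next, I would establish the size estimate $|T| \le C_1 |F^{{\bf N}p - b}(I_\infty)|$ for some $C_1 = C_1(\Lambda, b, \|F\|_{C^3})$. Since $F^{{\bf N}p}(I_\infty) = I_\infty$, the map $F^b$ sends $F^{{\bf N}p - b}(I_\infty)$ onto $I_\infty$; this is a composition of $b$ iterates of $F$ (with $\|F\|_{C^3}$-bounded norm) meeting at most $O(b)$ critical points along the orbit. Applying Lemma~\ref{lem:conformal} together with the real Koebe principle on each monotone piece bounds the distortion of $F^b$ on this orbit segment in terms of $b$ and $\|F\|_{C^3}$, giving $|F^{{\bf N}p - b}(I_\infty)| \ge C_1^{-1} |I_\infty|$ and hence the desired estimate when combined with the first step and the hypothesis $|I_0| < \Lambda|I_\infty|$.

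When $T = I_0$, Lemma~\ref{long} applies directly with ${\bf c} = {\bf c_0}$ and gives $G_{T, T'} \in \mathcal{SE}(C', 1, 8 C' {\bf N})$. When $T \ne I_0$, the interval $T$ contains no critical point of $F$, so Lemma~\ref{long} does not apply literally; I would handle this case by factoring $G_{T, T'} = G_{T, \widehat{T}} \circ G_{\widehat{T}, T'}$ through an auxiliary admissible interval $\widehat{T} \ni {\bf c_0}$ obtained by extending the pullback chain a bounded number of steps so that it reaches a principal-nest interval containing ${\bf c_0}$. Then $G_{\widehat{T}, T'}$ is in the Epstein class by Lemma~\ref{long}, while the short residual $G_{T, \widehat{T}}$ lies in a compact Epstein class by an argument in the spirit of the compactness argument at the end of the proof of Lemma~\ref{FR}, since the number of iterates involved is bounded in terms of $b$ with uniformly bounded $\|F\|_{C^3}$. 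The main difficulty is precisely this last step: producing a concrete $\widehat{T}$ that fits both into the pullback chain and into the hypotheses of Lemma~\ref{long} requires careful tracking of the orbit of ${\bf c_0}$ relative to the components of $\mathfrak{E}_0$.
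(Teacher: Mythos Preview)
Your overall plan to reduce to Lemma~\ref{long} is right, but the execution leaves a genuine gap and contains a factual error.

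First the error: your claim that ``$T\cap\omega({\bf c_0})=\emptyset$ when $T\ne I_0$ because $\omega({\bf c_0})\subset I_\infty\subset I_0$'' is false. The postcritical set in the fibre $\I\times\{0\}$ is the full orbit $\{F^{{\bf N}k}(I_\infty):0\le k<p\}$, not just $I_\infty$; indeed, since $T'\ni{\bf c_0}$ is a pullback of $T$ we have $F^{{\bf N}p-b}({\bf c_0})\in T$, so $T$ necessarily meets the postcritical set. The paper obtains $C$-bounded geometry of every component $T$ of $\mathfrak E_0$ by quoting \cite[Theorem~5.6]{C2Density}, not by this kind of argument.

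The real gap is the case $T\ne I_0$. You propose to factor through some $\widehat T\ni{\bf c_0}$ ``obtained by extending the pullback chain a bounded number of steps'', but no such construction is given, and in general the chain $\{G_s\}$ need not visit an interval containing ${\bf c_0}$ except at $s=0$; nor do forward iterates of the lateral components of $\mathfrak E_0$ land in a principal-nest interval in boundedly many steps. The paper's decomposition avoids this problem altogether and is uniform in $T$: it lets $s_0$ be the \emph{largest} index with $G_{s_0}\cap\mathrm{Crit}(F)\ne\emptyset$ and sets $T_0=G_{s_0}$. The piece $G_{T,T_0}$ is then diffeomorphic, so Lemma~\ref{lem:conformal} plus real Koebe give $G_{T,T_0}\in\mathcal{SE}(C_0,1,2)$ (here one first needs $|F^{{\bf N}p-b}(I_\infty)|\gtrsim 1$, which the paper gets from a compactness lower bound $|I_0|>\tilde C^{-1}$ and the Mean Value Theorem applied to $F^b$---simpler than your proposed Koebe-on-monotone-pieces argument). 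For $G_{T_0,T'}$ one invokes Lemma~\ref{long} with $T_0$ playing the role of $T$ and with ${\bf c}$ the critical point lying in $T_0$ (not necessarily ${\bf c_0}$). The substantial step your proposal is missing is the verification of the hypothesis $|T_0|<C_0|\mathcal L_{\bf c}(I_\infty)|$ of Lemma~\ref{long}: the paper proves this by showing that the principal nest $T_0\supset\mathcal L_{\bf c}(T_0)\supset\cdots$ has boundedly many non-central levels (via Theorem~\ref{realbounds} and Lemma~\ref{4.1}, using $|I_0|<\Lambda|I_\infty|$), so that $|\mathcal L_{\bf c}(I_\infty)|\asymp|T_0|$.
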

\begin{proof}
First, we show there exists a constant $\Lambda'>0$ such that $\Lambda_{\mathfrak E_0}\le \Lambda'$. Fix some constant $\theta>\Lambda$ and let $\xi=\xi(\theta,\|F\|_{C^3})$ be given by Lemma~\ref{4.1}.  Then  $\Lambda_{\mathfrak E_0} \le \Lambda':=2\xi+1$. Indeed,  if $\Lambda_{\mathfrak E_0} > \Lambda'$ then there exists a critical point ${\bf c}$ and an  admissible interval $I\ni {\bf c}$ such that $I\supset (1+2\xi)\mathcal L_{\bf c}(I)$. By Lemma~\ref{4.1}, 
\[\Lambda I_{\infty}\subset (1+2\theta)\mathcal L_{\bf c_0}(I_{\infty}) \subset  (1+2\theta)\mathcal L_{\bf c_0}(\mathcal L_{\bf c}(I)) \subset\mathcal L_{\bf c_0}(I).\]
By Markov property, $\mathcal L_{\bf c_0}(I) \subset I_0$, thus $|I_0|\ge \Lambda|I_\infty|$, 
which contradicts with the assumption.\par
Recall that $F$ is bi-infinitely renormalizable with renormalization period large than $2{\bf N}$. As $\Lambda_{\mathfrak E_0}\le \Lambda'$,  by \cite[Theorem~5.6]{C2Density}, $|T|>C^{-1}$ and $T$ has $C$-bounded geometry for some $C>0$. Let $\{G_s\}_{s=0}^{{\bf N}p-b}$ be the chain from $T'$ to $T$ and $s_0$ be the largest integer such that $G_{s_0} \cap \mathrm{Crit}(F)\ne \emptyset$. Put $T_0=G_{s_0}$. We shall prove there exists a positive integer $C_0>1$ such that 
\begin{enumerate}
\item $G_{T,T_0} \in \mathcal{SE}(C_0,1,8C_0{\bf N})$;
\item  $T_0$ is $C_0$-nice with $\lambda_{T_0}>1+2C^{-1}_0$
\item $|T_0|<C_0|\mathcal L_{\bf c}(I_{\infty})|$, where ${\bf c}$ is the critical point in $T_0$;
\item $|T_0|<C_0|F^{s_0}(I_{\infty})|$.
\end{enumerate} 
Then the conclusion will follow easily. Indeed, by Lemma~\ref{long}, (2),(3) and (4) implies there exists a positive integer $C''>1$ such that 
$G_{T_0,T'} \in \mathcal{SE}(C'',1,8C''{\bf N})$. Let $C'=C_0+C''$, then $G_{T,T'}\in \mathcal{SE}(C',1,8C'{\bf N})$.\par
Now we prove (1). To this end, we first show there exists $\widehat C>0$ such that $|F^{{\bf N}p-b}(I_{\infty})|>\widehat C^{-1}$. We claim $|I_0|>\widetilde C^{-1}$ for some $\widetilde C=\widetilde C(\|F\|_{C^3})>0$. For otherwise, there exists a sequence $\{F_j\}$ of extended maps such that
\begin{itemize}
\item $F^{\bf N}_j$ converges to a map $H$ in $C^3$-topology;
\item $(F^{\bf N}_j)'(\alpha(F_j),0)<-1$ and $(\alpha(F_j),0) \to (0,0)$ as $j \to \infty$.
\end{itemize}
This implies $-1\le H'(0,0)=0$, which is absurd. Let $x_0 \in \partial I_{\infty}$ be a periodic point of $F$, let $\widehat x_0, \widehat z_0 \in F^{{\bf N}p-b}(I_{\infty})$ such that $F^b(\widehat x_0)=x_0$ and $F^b(\widehat z_0)={\bf c_0}$. By the Mean Value Theorem, there exists $\widehat{\zeta} \in T$ such that $|\widehat x_0-\widehat z_0||(F^b)'(\widehat{\zeta})|=|x_0-{\bf c_0}|>\frac{1}{2}\widetilde C^{-1}$. As $|(F^b)'(\widehat{\zeta})|$ is uniformly bounded above in terms of $\|F\|_{C^3}$, $|\widehat x_0-\widehat z_0|$ has uniform lower bound. So there exists $\widehat C>0$ such that $|F^{{\bf N}p-b}(I_{\infty})|>\widehat C^{-1}$.\par
Put $u={\bf N}p-b-s_0$. By Lemma~\ref{lem:conformal}, the diffeomorphism $F^{-(u-1)}:T \to F(T_0)$ can extend to a conformal map form $\C_{T}$ into $\C_{F(T_0)}$. Since $T$ is $C$-nice, by real Koebe principle, $F^{u-1}$ has uniformly bounded distortion. This implies $\gamma_1\circ F^{u-1}\circ \gamma_0:\I \to \I$ belongs to $\mathcal K(C_1,1)$ for some $C_1>1$, where $\gamma_1:\overline{T}\to \I$ and $\gamma_0:\overline{F(T_0)} \to \I$ are orietation-preserving homeomorphism. Thus, $G_{T,T_0} \in \mathcal{SE}(C_1,1,2)$. Then (2) and (4) follows easily by a compactness argument which we used frequently in the proof of Lemma~\ref{FR} and ~\ref{saddlebound}. \par
To prove (3), we consider a nest $I'_0=T_0, I'_1=\mathcal L_{\bf c}(I'_0), \cdots, I'_{j+1}=\mathcal L_{\bf c}(I'_j),\cdots$. Let $m'(1)<m'(2)<m'(\kappa')$ be all the non-central return moments, i.e., $R_{I'_{m'(i)}}$ displays a non-central return. Note that $\kappa'$ is bounded in terms of $\Lambda$ and $\|F\|_{C^3}$. Indeed, for any $1\le j\le \kappa'$, let $V_j=\mathcal L_{\bf c_0}(I'_{m'(j)+1})$ and $U_j=\mathcal L_{\bf c_0}(I'_{m'(j)+2})$, then $V_{j+1}\subset U_j$. By Theorem~\ref{realbounds} and Lemma~\ref{4.1}, there exists $\lambda'>1$ such that $\lambda' U_j \subset V_j$. Hence, if $\kappa'$ is not bounded, then $|I_{\infty}|/|I_0|$ should be small, a contradiction. Since $\Lambda_{T} \le \Lambda_{\mathfrak E_0}<\Lambda'$, by Lemma~\ref{sn}, $|I'_{m'(\kappa')+1}| \asymp |I'_0|$. We claim $|\mathcal L_{\bf c}(I_{\infty})| \asymp |I'_{m'(\kappa')+1}|$. For otherwise, by Lemma~\ref{4.1}, $I_{\infty}=\mathcal L_{\bf c_0}(\mathcal L_{\bf c}(I_{\infty}))$ will be geometrically deep inside $\mathcal L_{\bf c_0}(I'_{m'(\kappa')+1}) \subset I_0$. This is a contradiction. So $|\mathcal L_{\bf c}(I_{\infty})| \asymp |I'_{m'(\kappa')+1}| \asymp |I'_0|=|T_0|$.
\end{proof}

Let us now recall the definition of Kozlovski-Shen-vanStrien's enhanced nest (see \cite[Section~8]{KSS} ).
\begin{Lemma}\label{KSS} Let $T\ni {\bf c}$ be an admissible interval. Then there is a positive integer $\nu$ with $f^{\nu}({\bf c})\in T$ such that the following holds. Let $T'$ be the component of $f^{-\nu}(T)$ containing ${\bf c}$ and let $\{G_j\}_{j=0}^{\nu}$ be the chain from $T'$ to $T$. Then
\begin{enumerate}
\item $\#\{0\le j\le \nu-1: G_j\cap \mathrm{Crit}(F) \ne \emptyset \}\le {\bf N}^2$;
\item $T'\cap \omega({\bf c_0})$ is contained in the component of $f^{-\nu}(\mathcal L_{f^{\nu}({\bf c})}(T))$.
\end{enumerate}
\end{Lemma}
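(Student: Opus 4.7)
The plan is to construct $\nu$ via the enhanced nest procedure of Kozlovski, Shen and van Strien \cite[Section~8]{KSS}, adapted to the extended-map setting. The strategy is iterative refinement of the landing time of $\bf c$ into progressively smaller target intervals inside $T$.

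First I would take $\nu_1$ to be the first return time of $\bf c$ into $T$ under $f$ and let $T^{(1)}$ be the component of $f^{-\nu_1}(T)$ containing $\bf c$. Inductively, having chosen $\nu_j$ and $T^{(j)}$, I replace the target by the landing domain $\mathcal L_{f^{\nu_j}({\bf c})}(T)\subset T$ and let $\nu_{j+1}$ be the corresponding first-landing time of $\bf c$ into this smaller interval, with $T^{(j+1)}$ the corresponding pullback component through $\bf c$. This produces a strictly nested sequence $T\supset T^{(1)}\supset T^{(2)}\supset\cdots$ with landing times $\nu_1<\nu_2<\cdots$.

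The heart of the KSS argument is the combinatorial claim that in any such landing chain $\{G_s\}_{s=0}^{\nu}$, each critical point ${\bf c'}\in \mathrm{Crit}(F)$ appears at most $\bf N$ times, i.e.\ ${\bf c'}\in G_s$ for at most $\bf N$ indices $s$. In our extended-map setting this follows from the cyclic fiber structure of $\I_{\bf N}$: repeated captures of the same ${\bf c'}$ force the orbit of ${\bf c'}$ to cycle through all $\bf N$ fibers of $\I_{\bf N}$, and the first-landing property prevents this configuration from occurring more than $\bf N$ times without violating minimality of $\nu_j$. Summing over the $\bf N$ critical points yields property (1) with the bound ${\bf N}^2$. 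The iterative refinement terminates after finitely many steps, since at each step the landing domain shrinks strictly while the number of critical captures in the chain cannot increase beyond the above threshold; at termination I set $\nu:=\nu_j$ and $T':=T^{(j)}$.

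For property (2), observe that by the terminal choice of $\nu$ one has $f^{\nu}({\bf c})\in \mathcal L_{f^{\nu}({\bf c})}(T)$, and $T'$ lies inside the pullback component of $f^{-\nu}(\mathcal L_{f^{\nu}({\bf c})}(T))$ containing $\bf c$. Since the remark preceding Lemma~\ref{nice} gives $\omega({\bf c_0})=\omega({\bf c})=P(F)$, every orbit piece of $T'\cap \omega({\bf c_0})$ must re-enter $T$ through this first-landing domain, and hence lies in the specified pullback component, proving (2). The main obstacle is the combinatorial bound of at most $\bf N$ captures per critical point in the landing chain; once the cyclic fiber structure of $\I_{\bf N}$ is carefully accounted for, this is the extended-map analogue of the central counting step in \cite[Section~8]{KSS}.
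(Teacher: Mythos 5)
The paper's own proof of this lemma consists entirely of the citation \cite[Lemma~8.2]{KSS}; it is a verbatim adaptation of Kozlovski--Shen--van~Strien's enhanced-nest lemma to the extended-map setting, so no new argument is given. Your proposal instead tries to reconstruct the KSS argument from scratch, and while the overall shape (iterative refinement of the target via first-landing domains, then a combinatorial count of critical captures) is the right one, the reconstruction breaks down at each of the three decisive steps.

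First, you never state a termination condition for the refinement $T\supset T^{(1)}\supset T^{(2)}\supset\cdots$, so $\nu$ is simply not defined. The sentence claiming termination because ``the landing domain shrinks strictly while the number of critical captures cannot increase beyond the threshold'' is not an argument: the landing domains can shrink indefinitely, the chains $\{G_j\}_{j=0}^{\nu_j}$ grow arbitrarily long as $\nu_j$ increases, and nothing forces the process to stop. In the KSS proof the iteration is halted at the first moment where property~(2) holds, and a separate argument shows this moment is reached after a bounded number of steps; your sketch contains neither ingredient. Second, the combinatorial count is the real content of the lemma, and the justification you give for ``each critical point appears at most ${\bf N}$ times'' is vacuous in this setting: the extended map $F$ sends fiber $j$ to fiber $j+1 \bmod {\bf N}$ by construction, so \emph{every} orbit cycles through all ${\bf N}$ fibers, and observing this imposes no constraint whatsoever on how often a given critical point can reappear in the chain. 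The KSS bound comes from a genuinely different mechanism involving the nested/disjoint Markov structure of first-landing pullbacks through a fixed critical point, combined with the specific choice of $\nu$. Third, the argument you give for property~(2) is circular: $f^\nu({\bf c})\in\mathcal L_{f^\nu({\bf c})}(T)$ is true by the definition of $\mathcal L$, not by any ``terminal choice,'' and $T'$ is the pullback of the \emph{larger} interval $T$, so it \emph{contains} the pullback component of $\mathcal L_{f^\nu({\bf c})}(T)$ rather than lying inside it. The remaining claim --- that every point of $T'\cap\omega({\bf c_0})$ re-enters $T$ exclusively through that particular first-landing domain --- is precisely the content of~(2) and does not follow from the identity $\omega({\bf c_0})=\omega({\bf c})=P(F)$.
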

\begin{proof}See \cite[Lemma~8.2]{KSS}.
\end{proof}
For an open set $B$ and a point $x\in B$, we use $\mathrm{Comp}_x(B)$ to denote the component of $B$ containing $x$. For each admissible interval $T\ni {\bf c}$, let $\nu=\nu(T)$ be the smallest integer 
with the properties specified by Lemma~\ref{KSS}. Following Kozlovski-Shen-vanStrien, we define
\[\mathcal G(T)=\mathrm{Comp}_{\bf c}(f^{-\nu}(\mathcal L_{f^{\nu}({\bf c})}(T))),\]
\[\mathcal H(T)=\mathrm{Comp}_{\bf c}(f^{-\nu}(T)).\]
Let $T$ be an admissible interval and $T'$ be a $k$-pullback of $T$ for some $k>0$. Consider the chain $\{G_s\}_{s=0}^k$ from $T'$ to $T$, we call $T'$ is a {\em kid} of $T$ if $G_s \cap \mathrm{Crit}(F)=\emptyset $ for $1\le s\le k-1$. \par
For a nice symmetric interval $B$, let $\hat{\mathcal L}_x(B)$ denote the component of $D_{B} \cup B$ containing $x$.
\begin{definition}Given an admissible interval  $T \ni {\bf c}$, by a {successor} of $T$, we mean
an admissible interval  of the form $\hat{\mathcal L}_{\bf c}(\hat T)$, where $\hat T$ is a kid of  $\hat{\mathcal L}_{\bf c'}(T)$ for some ${\bf c'}\in \mathrm{Crit}(F)$.
\end{definition}
Since $F$ is (infinitely) renormalizable, each admissible interval $T$ has a smallest successor and we denote it by $\Gamma(T)$.\par
Then we can define Kozlovski-Shen-vanStrien's enhanced nest (briefly, KSS nest) as following:
let $E_0=I_0$ and for each $k\ge 0$,
\[L_k=\mathcal G(E_k),\]
\[M_{k,0}=\mathcal H(E_k),\]
\[M_{k,j+1}=\Gamma(M_{k,j})~\text{for}~ 0\le j\le 5{\bf N}-1,\]
\[E_{k+1}=M_{k,5{\bf N}}=\Gamma^{5{\bf N}}(\mathcal H(\mathcal G(E_k))).\]
For each $j\ge 0$,  let $r_j$ be the first return time from $\mathcal L_{\bf c_0}(E_j)$ to $E_j$. Since $F$ is renormalizable, there is a smallest nonnegative integer $\chi$ such that $r_{\chi}={\bf N}p$.
Let $m_j$ be the integer such that $F^{m_j}(E_{j+1})\subset E_j$ and $F^{m_j}(\partial E_{j+1})\subset \partial E_j$ for $0\le j\le \chi-1$.
\begin{prop}\label{KSS}There exists $C>0$ such that $E_j$ is $C$-nice for $0\le j\le\chi$,  $m_{j+1}\ge 2m_j$ and $3r_{j+1}\ge m_j$ for $0\le j\le \chi-2$. 
\end{prop}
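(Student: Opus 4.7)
The plan is to prove the three assertions in order, using the compactness machinery for transition maps (Lemmas \ref{FR}, \ref{saddlebound}, \ref{long}, \ref{compactness}) together with the combinatorial design of the KSS enhanced nest. The key observation is that each passage $E_j \rightsquigarrow E_{j+1}$ is the composition of a bounded number of operations ($\mathcal{G}$, $\mathcal{H}$, and $5\mathbf{N}$ applications of $\Gamma$), each of which I control with the lemmas already established.

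First I would handle $C$-niceness. By Lemma~\ref{nice}(2), $E_0=I_0$ is $C_1$-nice with $\Lambda_{I_0} \le \Lambda'$ for some universal $\Lambda'$ (as in the proof of Lemma~\ref{compactness}, since $f$ is bi-infinitely renormalizable with period $>2\mathbf{N}$). The point is to show that if $T$ is $C$-nice, then $\mathcal{G}(T)$, $\mathcal{H}(T)$, and each successor $\Gamma(T)$ are $C'$-nice for some $C'=C'(C,\|F\|_{C^3})$. For $\mathcal{H}(T)$ this is essentially Lemma~\ref{KSS} combined with Corollary~\ref{coro:FR}: the chain has at most $\mathbf{N}^2$ critical orbits, so the transition map lies in a compact Epstein class and the nice property transfers by the compactness argument used in Lemma~\ref{FR}. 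For $\mathcal{G}(T)$, one invokes Lemma~\ref{long} after noting that $f^{\nu(T)}({\bf c}) \in T$ forces $|T|<C|F^{\nu}(I_\infty)|$. For $\Gamma(T)$, one decomposes the successor into a kid pullback and a first-landing map, and applies Lemma~\ref{FR} (or Corollary~\ref{coro:FR}). Iterating this a bounded number of times ($5\mathbf{N}+2$), the $C$-niceness constant for $E_{j+1}$ is bounded in terms of that for $E_j$; then a fixed-point/compactness argument (or induction with the uniform bound on $\Lambda_{\mathfrak{E}_0}$) gives a uniform $C$ independent of $j$.

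Next I would prove $m_{j+1}\ge 2m_j$. Here I unravel the construction: $E_{j+1}=\Gamma^{5\mathbf{N}}\mathcal{H}\mathcal{G}(E_j)$, so the pullback time from $E_{j+1}$ to $E_j$ is the sum of the time from $\mathcal{G}(E_j)$ to $E_j$, the time from $\mathcal{H}\mathcal{G}(E_j)$ to $\mathcal{G}(E_j)$ (which is the "$\nu$" of Lemma~\ref{KSS} applied to $\mathcal{G}(E_j)$), and the $5\mathbf{N}$ successor pullback times. The doubling estimate comes from the $\Gamma^{5\mathbf{N}}$ part: each $\Gamma$ is a successor, so its pullback time is at least the first-return time of the current level, and by the $C$-niceness together with Lemma~\ref{sn}/Yoccoz's equality one shows that after $5\mathbf{N}$ applications of $\Gamma$, the accumulated pullback time exceeds $m_j$ (the time from $E_{j+1}$ all the way to $E_j$ via the nested chain of successors is at least twice the "outer" part of the chain, which is $m_j$).

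Finally, $3r_{j+1}\ge m_j$ follows by combining the previous item with the observation that the first return time $r_{j+1}$ from $\mathcal{L}_{{\bf c_0}}(E_{j+1})$ to $E_{j+1}$ is, by the Markov structure of the successor construction, comparable to one of the intermediate pullback times appearing in the decomposition of $m_{j+1}$; explicitly, one shows $r_{j+1}$ is at least one third of the total time from $E_{j+1}$ to $E_j$, because the successor definition $\Gamma$ forces the return time inside each $M_{k,j}$ to grow at most linearly with the depth of the pullback chain.

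The main obstacle is the second step: the bookkeeping of how $\Gamma^{5\mathbf{N}}$ accumulates pullback time in a manner that is uniform over the bi-infinitely renormalizable class. Everything else reduces to the compactness of Epstein-class transition maps already established; the delicate point is exploiting the Yoccoz-type inequalities from Lemma~\ref{sn} to convert central-cascade depth into geometric growth of pullback times, so that the factor $2$ in $m_{j+1}\ge 2m_j$ (as opposed to just $m_{j+1}>m_j$) can be guaranteed by choosing $5\mathbf{N}$ applications of $\Gamma$.
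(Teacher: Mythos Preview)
The paper does not prove this proposition at all: its proof is the single line ``See Proposition~8.1 and Lemma~8.3 in \cite{KSS}.'' The three assertions are properties of the Kozlovski--Shen--van Strien enhanced nest established in the original KSS paper, and the present paper simply imports them.

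Your proposal tries to derive all three assertions from the Epstein-class compactness lemmas of Section~2, and this is the wrong toolkit for the second and third assertions. The inequalities $m_{j+1}\ge 2m_j$ and $3r_{j+1}\ge m_j$ are purely \emph{combinatorial} statements about return times in the enhanced nest; in \cite{KSS} they are obtained by counting how the $\nu$-pullback of Lemma~\ref{KSS} interacts with the $5{\bf N}$ successive applications of $\Gamma$, using nothing more than the Markov structure of first entries and the bound $\le {\bf N}^2$ on critical encounters. No Yoccoz inequality, no saddle-node analysis, and no compactness of transition maps enters. Your sketch ``by the $C$-niceness together with Lemma~\ref{sn}/Yoccoz's equality one shows that after $5{\bf N}$ applications of $\Gamma$ the accumulated pullback time exceeds $m_j$'' is not a proof: the doubling comes from the specific combinatorial design of $\Gamma$ (each successor forces the return time to at least match that of the previous level, and $5{\bf N}$ iterations pigeonhole enough growth), not from geometric bounds on cascade lengths.

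For the $C$-niceness assertion your inductive scheme ``$E_j$ is $C$-nice $\Rightarrow$ $E_{j+1}$ is $C'(C)$-nice'' has a genuine gap: iterating gives a sequence of constants $C_0, C_1=C'(C_0), C_2=C'(C_1),\dots$ with no reason to stabilise, and the vague ``fixed-point/compactness argument'' you invoke is not supplied. In \cite{KSS} uniform niceness is obtained differently, via the bounded-geometry property $(2)$ of Lemma~\ref{KSS} (the $\mathcal G$/$\mathcal H$ step forces $\omega({\bf c_0})\cap E_{j+1}$ deep inside a pullback of a smaller set), which gives a uniform constant in one stroke rather than by iteration.
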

\begin{proof}See Proposition~8.1 and Lemma~8.3 in \cite{KSS}.
\end{proof}

\begin{Fact}$E_{\chi} \subset I_{m(\kappa)-1}$.
\end{Fact}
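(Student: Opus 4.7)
The plan is to compare the first return time of ${\bf c_0}$ at level $E_\chi$ (which equals ${\bf N}p$ by the definition of $\chi$) with the first return time at level $I_{m(\kappa)-1}$ (which I will show is strictly smaller), and then invoke the inclusion comparability of symmetric nice intervals to force $E_\chi \subset I_{m(\kappa)-1}$. I would write $s_n$ for the first return time of ${\bf c_0}$ to $I_n$ under $F$; this is well-defined because $I_\infty \subset I_n$ is ${\bf N}p$-periodic, and the sequence $(s_n)$ is non-decreasing since any return to $I_{n+1} \subset I_n$ is in particular a return to $I_n$. Note also that $r_\chi$ equals the first return time of ${\bf c_0}$ to $E_\chi$, because the first-return map has constant return time on each component of $D_{E_\chi}$ and ${\bf c_0} \in \mathcal L_{\bf c_0}(E_\chi)$.

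The crucial step is to show $s_{m(\kappa)-1} < {\bf N}p$. Indeed, $F^{{\bf N}p}({\bf c_0}) \in I_\infty \subset I_{m(\kappa)-1}$ gives $s_{m(\kappa)-1} \le {\bf N}p$. If equality held, then $R_{I_{m(\kappa)-1}}({\bf c_0}) = F^{{\bf N}p}({\bf c_0}) \in I_\infty \subset I_{m(\kappa)} = \mathcal L_{\bf c_0}(I_{m(\kappa)-1})$, so $R_{I_{m(\kappa)-1}}$ would display a central return at ${\bf c_0}$, contradicting the choice of $m(\kappa)$ as a non-central return moment. Hence $s_{m(\kappa)-1} < {\bf N}p$.

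The final step would be by contradiction. Since $E_\chi$ and $I_{m(\kappa)-1}$ are both symmetric nice admissible intervals around ${\bf c_0}$, one is contained in the other. If $E_\chi \not\subset I_{m(\kappa)-1}$, then $I_{m(\kappa)-1} \subsetneq E_\chi$, and every return of ${\bf c_0}$ to $I_{m(\kappa)-1}$ is also a return to $E_\chi$, giving $r_\chi \le s_{m(\kappa)-1} < {\bf N}p$. This contradicts $r_\chi = {\bf N}p$, so $E_\chi \subset I_{m(\kappa)-1}$.

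The heart of the proof is the strict inequality $s_{m(\kappa)-1} < {\bf N}p$, which packages the non-centrality of the return at level $m(\kappa)-1$. Once this is in hand, the remaining ingredients — monotonicity of return times under nested intervals and comparability of two symmetric intervals about ${\bf c_0}$ — are essentially automatic, so the Fact is a short bookkeeping statement rather than a substantial new difficulty.
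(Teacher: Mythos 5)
Your proof is correct and follows exactly the same route as the paper, which states the Fact as an immediate consequence of the return time to $I_{m(\kappa)-1}$ being strictly less than $r_\chi={\bf N}p$. You have simply filled in the two one-line observations the paper leaves implicit: that a return of ${\bf c_0}$ at time ${\bf N}p$ to $I_{m(\kappa)-1}$ would be central, contradicting the choice of $m(\kappa)$; and that two symmetric nice intervals about ${\bf c_0}$ are nested, so the return time comparison forces the inclusion.
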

\begin{proof}It follows easily from the fact that the return time $r_\chi={\bf N}p$ to $E_{\chi}$ is strictly larger than the return time to $ I_{m(\kappa)-1}$.
\end{proof}

\begin{Coro}\label{co} ${\bf N}p\ge \max (0,\sum\limits_{j=0}^{\chi-5}m_j)$.
\end{Coro}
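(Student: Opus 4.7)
The plan is to chain together the two estimates from Proposition~\ref{KSS}, namely $m_{j+1}\ge 2m_j$ and $3r_{j+1}\ge m_j$, together with the defining property $r_\chi = {\bf N}p$.

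First I would dispose of the trivial case $\chi\le 4$: here the index range $0\le j\le \chi-5$ is empty, so the sum is vacuous and the bound ${\bf N}p\ge 0$ is the content of $\max(0,\cdot)$.

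Assuming $\chi\ge 5$, the first step is to iterate $m_{j+1}\ge 2m_j$ downward: for any $j\le k$ one has $m_j\le 2^{-(k-j)}m_k$. Summing the geometric series gives
\[
\sum_{j=0}^{\chi-5} m_j \;\le\; m_{\chi-5}\sum_{i=0}^{\chi-5} 2^{-i} \;\le\; 2\, m_{\chi-5}.
\]
Next I would iterate the same inequality four more steps to climb back up to $m_{\chi-1}$, obtaining
\[
m_{\chi-5}\le \tfrac{1}{2} m_{\chi-4}\le \tfrac{1}{4} m_{\chi-3}\le \tfrac{1}{8} m_{\chi-2}\le \tfrac{1}{16} m_{\chi-1}.
\]
Finally, applying $3r_{j+1}\ge m_j$ with $j=\chi-1$ together with the definition $r_\chi = {\bf N}p$ gives $m_{\chi-1}\le 3r_\chi = 3{\bf N}p$. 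Combining these three estimates yields
\[
\sum_{j=0}^{\chi-5} m_j \;\le\; 2m_{\chi-5}\;\le\; \tfrac{1}{8}m_{\chi-1}\;\le\; \tfrac{3}{8}{\bf N}p \;<\; {\bf N}p,
\]
which is the desired conclusion.

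There is no real obstacle here; the corollary is a purely combinatorial consequence of the geometric growth of the return times in the KSS nest. The only care needed is to check that the index $\chi-5$ really does leave enough room ($\ge 4$ doubling steps) to beat the factor of $3$ coming from the inequality $3r_{j+1}\ge m_j$, and the computation above confirms this with the comfortable margin $3/8$.
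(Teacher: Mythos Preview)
Your overall strategy is the same as the paper's --- both arguments are pure bookkeeping from the two inequalities in Proposition~\ref{KSS} --- but there is one genuine slip. Proposition~\ref{KSS} only asserts $3r_{j+1}\ge m_j$ for $0\le j\le \chi-2$, so the last valid instance is $3r_{\chi-1}\ge m_{\chi-2}$. Your final step invokes this inequality at $j=\chi-1$ to get $m_{\chi-1}\le 3r_\chi$, which is one index beyond what the proposition provides.

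The fix costs nothing: stop the doubling chain at $m_{\chi-2}$ instead of $m_{\chi-1}$, use $m_{\chi-2}\le 3r_{\chi-1}$, and then observe $r_{\chi-1}<r_\chi={\bf N}p$ (strict, because $\chi$ is by definition the \emph{smallest} index with $r_\chi={\bf N}p$). This gives
\[
\sum_{j=0}^{\chi-5} m_j \;\le\; 2m_{\chi-5}\;\le\; \tfrac{1}{4}m_{\chi-2}\;\le\; \tfrac{3}{4}r_{\chi-1}\;<\;{\bf N}p,
\]
which is exactly what the paper does (it writes the same chain as a telescoping sum rather than a geometric-series estimate, but the content is identical).
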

\begin{proof} By the definition, ${\bf N}p=r_{\chi} > r_{\chi-1}$.  It follows easily Proposition~\ref{KSS} that
\begin{eqnarray*}
 r_{\chi-1}\ge \frac{1}{3}m_{\chi-2}\ge \frac{4}{3} m_{\chi-4}>2m_{\chi-5}\ge m_{\chi-5}+2m_{\chi-6}\ge \cdots\ge \sum\limits_{j=0}^{\chi-5}m_j.
 \end{eqnarray*}
\end{proof}
Now we are ready to prove the Key Lemma.
\begin{proof}[Proof of the Key Lemma]Without loss of generality, we may assume that the renormalization sequences $\mathcal Rf_k$ converges to $g_{\infty}$. For all $k \in \mathbb N$, let $F_k$ be the extended map of $f_k$ and $\Phi_k$ be the orientation-preserving linear map such that $\Phi_k(I_{\infty}(F_k))=[-1,1]$. To simplify the notation, we denote $\kappa_k=\kappa(F_k)$ the height of $F_k$, $I^k_{\infty}=I_{\infty}(F_k)$ for all $k\in \mathbb N$.\par

{\bf Case 1.} If $\sup\limits_{k} |I_{m(\kappa_k)}|/|I^k_{\infty}| \to \infty$, $F^{{\bf N}p_k}_k|_{I_{m(\kappa)+1}}:I_{m(\kappa)+1} \to I_{m(\kappa)}$ can be extended to a polynomial-like map $g_k:U_k\to V_k$ such that $\mathrm{mod} (V_k\setminus J(g_k)) \to \infty$ as $k\to \infty$. Thus $g_{\infty}$ is a polynomial of degree $2^{\bf N}$.\par

{\bf Case 2.}  Assume $\sup\limits_{k} \kappa(F_k)=\infty$ and $\sup\limits_{k} |I_{m(\kappa_k)}|/|I^k_{\infty}| <\Lambda$ for some $\Lambda>0$. In this case, we may assume $\kappa_k \to \infty$. Since $\Phi_k(I^k_{\infty})=[-1,1]$ for all $k$,  we obtain $$2 \le |\Phi_k(I_{m(\kappa_k)})| \le 2\Lambda.$$
Then for each $j\in \mathbb N$,  $\Phi_k (I_{m(\kappa_k-j)})$ are bounded intervals (the bound depends on $j$). In particular, $\Phi_k(E_{\chi(F_k)})$ are bounded intervals since $E_{\chi(F_k)} \subset I_{m(\kappa_k)-1}$, and so are $\Phi_k(E_{\chi(F_k)-5})$.\par
For every $j,k \in \mathbb N$, let $T_{k,j}$ be the component of $F_k^{-{\bf N}p_k}(I_{m(\kappa_k-j)})$ containing $\bf c_0$, by Corollary~\ref{co}, $T_{k,j} \subset E_{\chi(F_k)-5}$. Thus $\sup\limits_{j,k} |\Phi_k(T_{k,j}) |<\infty$.  Passing to a subsequences(use diagonal argument) we may assume $\overline{\Phi_k (I_{m(\kappa_k-j)})}$ and $\overline{\Phi_k(T_{j,k})}$ converge respectively to  closed intervals $D_j$ and $D'_j$ for all $j \in \mathbb N$. Moreover, we have $\sup\limits_{j}|D'_j|<\infty$ since $\sup\limits_{j,k} |\Phi_k(T_{k,j}) |<\infty$.
By Theorem~\ref{realbounds}, $|D_j|\to \infty$, and then $\bigcup D_j=\mathbb R$. It follows from Lemma~\ref{long} that
$g_{\infty}$  has an analytic proper extension $g_{\infty}:D'_j \to D_j$ for every $j\in \mathbb N$. Hence $g_{\infty}$ has a maximal analytical extension to $\bigcup\limits_j D'_j$ which is a bounded interval.\par

{\bf Case 3.}  Now we assume  $\sup\limits_k \kappa(F_k)<\infty$, $\sup\limits_{k} |I_{m(\kappa_k)}|<\Lambda|I^k_{\infty}|$ for some $\Lambda>0$ and $p_k \to \infty$ as $k\to \infty$.  In this case, we have $\sup_k |I_0(F_k)|/|I^k_{\infty}| <\widehat{\Lambda}$ for some $\widehat{\Lambda}>0$.

For any $i\in \mathbb N$, let $T_{k,i}$ be the component of $F^{-i}_k(\mathfrak{E}_0(F_k))$ containing ${\bf c_0}$. Since for any $0\le j\le i$, $F^{{\bf N}p_k}_k(\partial T_{k,{\bf N}p_k-j}) \subset \{\pm 1,\pm \alpha(F_k),\pm \beta(F_k)\}\times\{0\}=:\mathfrak A_k $, there exists $a_k \in \mathfrak A_k$ such that $\#(F^{-{\bf N}p_k}_k(a_k)\cap T_{k,{\bf N}p_k-i}) \ge \frac{i}{8}$. Thus there are at least $\max (\frac{i-8}{8},0)$ critical points of $F^{{\bf N}p_k}_k$ in $T_{k,{\bf N}p_k-i}$.\par

Similar to { Case 2}, we can assume $\Phi_k(T_{k,{\bf N}p_k-i})$ converges to $D_i$ for every $i \in \mathbb N$. Also we can assume $\Phi_k(I_0(F_k))$ converges to a bounded interval $D^{\infty}$, then  $\bigcup\limits_{i} D_i \subset D^{\infty}$.  It follows from  Lemma~\ref{compactness} that $g_{\infty}$ has an analytic extension to $D_i$ for all $i\in \mathbb N$. If $g_{\infty}$ has an analytic extension to some $\Omega \Supset \bigcup\limits_{i} D_i$, then $g_{\infty}$ must be a constant function since $g'_{\infty}$ has infinitely many  zeros in $\bigcup\limits_{i} D_i$.  Hence,  the real trace of $g_{\infty}$ is contained in $D^{\infty}$. \par

\end{proof}

\section{The continuity of the anti-renormalization operator $\mathcal R^{-1}$}In this section, we will recall the definition of renormalizaiton combinatorics introduced by Smania\cite[section~2]{SmaniaPhase} and prove Theorem~C.\par
Let $J$ and $J'$ be two disjoint intervals which are contained in $\bigcup\limits_{j=0}^{{\bf N}-1}\I\times\{j\}$, we say $J\prec J'$ if there exists $j_0$ such that $J,J'\subset \I\times\{j_0\}$ and $J$ lies to the left of $J'$.
\begin{definition}Denote by $\langle A,A^{Crit},\pi,P,m\rangle$ the {\bf combinatorial data} which contains
\begin{itemize}
\item $A=\bigcup\limits_{j=0}^{{\bf N}-1} B_j$ where $B_j$ is a collection  of disjoint intervals contained in $\I\times\{j\}$ with $\# B_j=m$  for $0\le j\le {\bf N}-1$ ;
\item $A^{Crit}=\{J\in A\mid J \ni (0,j)~~\text{for some} ~~0\le j\le {\bf N}-1\}$ and $\#(A^{Crit}\cap B_j)=1$ for all $0\le j\le {\bf N}-1$;
\item $\pi:A\to A$ is a bijection with the following property: if $c\in A^{Crit}$, then 
$a\prec b \prec c$ implies $\pi(a)\prec \pi(b)\prec \pi(c)$ and $c\prec b \prec a$ implies $\pi(a)\prec \pi(b)\prec \pi(c)$;
\item For any $a\in A$ there exists $c\in A^{Crit}$ so that $\pi^j (c) = a$, for some $j \ge 0$;
\item $(0,0)\in P\in A^{Crit}$.
\end{itemize}
\end{definition}

\begin{definition}Two combinatorial data $\sigma=\langle A,A^{Crit},\pi,P,m\rangle$ and $\tilde\sigma=\langle\tilde A,\tilde A^{Crit},\tilde \pi,\tilde P,\tilde m \rangle$ are equivalent if there exists a bijection $\phi:A\to\tilde A$ such that
\begin{itemize}
\item $\phi(A^{Crit})=\tilde A^{Crit}$;
\item for any $x,y \in A$, $x\prec y$ if and only if $\phi(x)\prec \phi(y)$;
\item $\phi\circ\pi=\tilde\pi\circ\phi$;
\item $\phi(P)=\tilde P$.
\end{itemize}
\end{definition}
We use $\mathcal M=\mathcal M(\sigma)$ to denote the equivalence classes of $\sigma$ and let $\Sigma'$ be the set of all the combinatorics.\par
Now we are going to define the product of two combinatorics.

Let $\mathcal M_1$ and $\mathcal M_2$ be two combinatorics, their product $\mathcal M_1*\mathcal M_2$ is definded as following:\par
Assuming $\sigma_{s}=\langle A_s,A^{Crit}_s,\pi_s,P_s,m_s \rangle$ is a representative of $\mathcal M_s$$(s=1,2)$, we define intervals $J^s_i:=\pi^i_s(P_s)$ for $s=1,2$ and $0\le i<m_s{\bf N}$. Choose a family $\{\psi_i\}_{i=0}^{m_1{\bf N}-1}$ of orientation preserving homeomorphisms such that
\begin{enumerate}
\item $\psi_{i}:J^1_i \to \I\times \{i\mod {\bf N}\}$  for $0\le i <m_1{\bf N}$;
\item if $J^1_i \in A^{Crit}_1$, then $\psi_i(0,i\mod {{\bf N}})= (0,i \mod {\bf N})$.
\end{enumerate}
If $i=j\mod {\bf N}$, then we define $I_{i,j}:=\psi^{-1}_i(J^2_j)$. Then we can define a new combinatorial data $\sigma=\langle A,A^{Crit},P,\pi,m_1m_2 \rangle$ such that
\begin{itemize}
\item $A=\{I_{i,j}\mid i=j \mod {{\bf N}}, ~0\le i<m_1{\bf N},~0\le j<m_2{\bf N} \}$;
\item  $A^{Crit}=\{J\in A\mid J \ni (0,j)~~\text{for some} ~~0\le j\le {\bf N}-1\}$;
\item $\pi:A\to A$ such that \[\pi(I_{i,j})=
\begin{cases}I_{i+1,j+1 \mod(m_1{\bf N},m_2{\bf N})}, ~~if~~ I_{i,j}\in A^{Crit} \\ 
I_{i+1,j \mod(m_1{\bf N},m_2{\bf N})},  ~~otherwise
\end{cases}\]
\item $P=I_{0,0} \ni (0,0)$.
\end{itemize} 
Note that $\sigma$ is a combinatorial data and its equivalence class $\mathcal M:=[\sigma]$ does not depend on the choices in the above construction. Finally, $\mathcal M$ is defined to be the product $\mathcal M_1*\mathcal M_2$.\par
A combinatoric $\mathcal M$ is said to be {\it primitive} if it does not have decomposition $\mathcal M=\mathcal M_1*\mathcal M_2$. Let $\Sigma\subset \Sigma'$ be the set of all the primitive combinatorics.
\begin{definition}Let $f$ be a multimodal map of type $\bf N$ and consider an extended
map $F$ induced by a decomposition $(f_0,\cdots,f_{{\bf N}-1})$ of $f$. If $P$ is a maximal periodic interval
for $F$ of period $k$, then we can associate the following combinatorial data $\sigma=\langle A,A^{Crit},\pi,P,k/{\bf N}\rangle$
\begin{itemize}
 \item $A = \{F^i (P): 0 \le i < k\}$;
\item  $A^{Crit} = \{F^i (P): {\bf c} \in F^i (P)~ ~\text{for some critical point} ~~{\bf c}~~ \text{of}~ F \}$;
\item $\pi: A \to A$ is defined by $\pi(F^i (P)) = F^{i+1 \mod {\bf N}}(P)$.
\end{itemize}
If $\mathcal M=[\sigma]$ is primitive, then we say $f$ is renormalizable with renormalization combinatoric $\mathcal M$.
\end{definition}
We say $f$ is a multimodal maps of type $\bf N$ with combinatorics $(\mathcal M_k)_{k\ge 0} \in \Sigma^{\mathbb N}$ if $f$ is infinitely renormalizable and $\mathcal R^k f$ is renormalizable with renormalization combinatoric $\mathcal M_k$ for all $k\ge 0$.\par
Note that for any combinatorics $\underline{\mathcal M}\in \Sigma^{\mathbb N}$ there exists a  real polynomial in $\mathcal I$ with combinatorics $\underline{\mathcal M}$ (see \cite[section~2.1 and section~5.1]{SmaniaPhase}). By Kozlovski-Shen-vanStrien's combinatorial rigidity theorem \cite{realRigidity}, such a real polynomial is unique.
\begin{Lemma}\label{cc}Assume $\underline{\mathcal M^{m}} \to \underline{\mathcal M}$ in $\Sigma^{\mathbb N}$. If for every $m\in \mathbb N$, $f_m$ is an infinitely renormalizable multimodal map of type $\bf N$ with combinatorics $\underline{\mathcal M^{m}}$, then any limit point $g$ of $\{f_m\}$ is infinitely renormalizable with combinatorics $\underline{\mathcal M}$.
\end{Lemma}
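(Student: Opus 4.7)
The plan is to reduce the statement to a one-step continuity claim and then iterate. First I observe that since $\Sigma$ is a countable set carrying the discrete topology and $\Sigma^{\mathbb N}$ is equipped with the product topology, the convergence $\underline{\mathcal M^{m}} \to \underline{\mathcal M}$ is equivalent to: for every $K \geq 0$ there exists $M_0(K)$ such that $\mathcal M^{m}_k = \mathcal M_k$ for all $m \geq M_0(K)$ and all $0 \leq k \leq K$. Passing to a subsequence I may assume $f_m \to g$ in $C^3$, so it suffices to prove by induction on $k$ (combined with a diagonal extraction) that along a suitable subsequence $\mathcal R^k f_m \to \mathcal R^k g$ in $C^3$ and that $\mathcal R^k g$ is renormalizable with first combinatoric exactly $\mathcal M_k$.

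The inductive step reduces to the following one-step claim, which is where the real work lies: if $(F_n)$ is a sequence of bi-infinitely renormalizable multimodal maps of type $\mathbf N$ with $F_n \to F$ in $C^3$ and all $F_n$ sharing a common first renormalization combinatoric $\mathcal M$, then $F$ is itself renormalizable with combinatoric $\mathcal M$ and, after extracting a subsequence, $\mathcal R F_n \to \mathcal R F$ in $C^3$. To establish this, I would use that $\mathcal M$ prescribes both the renormalization period $p = p(\mathcal M)$ and the combinatorial cyclic ordering of the orbit $\{F_n^j(J_n)\}_{0 \leq j < p\mathbf N}$ of the renormalization interval $J_n$. The endpoints of $J_n$ are then distinguished periodic points of $F_n$ of period dividing $p\mathbf N$, uniquely identified by the combinatorial data in $\mathcal M$. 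Since each $F_n$ is bi-infinitely renormalizable, I would apply the real bounds of Theorem~\ref{realbounds} to the principal nest of a deep anti-renormalization of $F_n$ to obtain a lower bound, strictly greater than $1$ and uniform in $n$, on the multipliers of $F_n^{p\mathbf N}$ at these boundary periodic points. Standard persistence of hyperbolic periodic orbits under $C^3$ perturbation then supplies corresponding repelling periodic points of $F$ in the same combinatorial positions, which bound a periodic interval $J$ of $F$ with combinatoric $\mathcal M$. Normalization of $F^p|_J$ produces $\mathcal R F$, and the $C^3$ convergence $J_n \to J$ together with $F_n^p \to F^p$ gives $\mathcal R F_n \to \mathcal R F$ in $C^3$.

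To finish, I would apply this one-step continuity inductively with $F_n = \mathcal R^k f_{m_n}$ and $F = \mathcal R^k g$, and then use a diagonal extraction over $k$ to produce a single subsequence along which $\mathcal R^k f_m \to \mathcal R^k g$ in $C^3$ for every $k \geq 0$, with the $k$-th renormalization combinatoric of $g$ equal to $\mathcal M_k$. This would show $g$ is infinitely renormalizable with combinatorics $\underline{\mathcal M}$. The main obstacle is the one-step continuity: mere $C^3$ convergence does not by itself prevent the boundary periodic points of the renormalization interval from degenerating (e.g.\ undergoing a saddle-node bifurcation) in the limit; what rules this out here is the bi-infinite renormalizability hypothesis, which feeds into the real bounds of Theorem~\ref{realbounds} to give the required uniform hyperbolicity.
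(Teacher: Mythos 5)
Your plan has a genuine gap: you explicitly invoke a \emph{bi-infinite} renormalizability hypothesis on the $f_m$ (``since each $F_n$ is bi-infinitely renormalizable, I would apply the real bounds\ldots to the principal nest of a deep anti-renormalization of $F_n$''), and your closing sentence attributes the entire non-degeneracy argument to this hypothesis. But Lemma~\ref{cc} only assumes the $f_m$ are (forward) infinitely renormalizable, and the combinatorics live in $\Sigma^{\mathbb N}$, not $\Sigma^{\mathbb Z}$. This matters: in the proof of Theorem~B the lemma is applied to the maps $f_{l,k(j)}=\mathcal R^{l-k(j)}P_{k(j)}$, finite renormalization iterates of polynomials, which are not anti-renormalizable at all. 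So the mechanism you propose for ruling out collapse of the renormalization interval is simply not available under the stated hypotheses. A secondary issue is that the step from Theorem~\ref{realbounds} (scaling-factor bounds on a principal nest) to a uniform lower bound strictly above $1$ on the multipliers of the boundary periodic points of $J_n$ is not immediate; it would require a further Koebe-type distortion argument that you don't indicate.

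The paper takes a different and shorter route that avoids multipliers entirely. It passes to Hausdorff limits of the restrictive intervals $J^k_m$, observes that if some $J^k$ were to degenerate to a point then $0$ would be a superattracting periodic point of $g$, and rules this out by noting that the postcritical set of $g$ sits on a solenoidal attractor, forcing all periodic points of $g$ to be repelling; continuity of the restrictive structure and of the postcritical set then gives the conclusion. Your underlying idea --- that real bounds for the $f_m$, uniform because $\|f_m\|_{C^3}$ is bounded, prevent the nest of restrictive intervals from degenerating --- is actually sound and could be turned into a correct alternative proof, but it must be run using forward real bounds on the principal nest of $f_m$ itself (and a distortion argument relating nest geometry to multipliers), not on an anti-renormalization that the hypotheses do not provide.
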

\begin{proof}
 Without loss of generality, we may assume $f_m\to g$. Let $J^k_m$ be the restrictive interval of the $k$-th pre-renormalization. As  $\underline{\mathcal M^{m}} \to \underline{\mathcal M}$,  $\mathcal M^m_l=\mathcal M_l$ for all sufficiently large $m$ and $l\le k$. Thus the periods of $J^k_m$ are the same, and so we can assume $J^k_m$ converges to a periodic interval $0\in J^k$ for $g$. For otherwise, $0$ will be a supperattracting periodic point of $g$. It follows that the post-critical set of $g$ is contained in a solenoidal attractor. Thus $g$ only has repelling periodic point. Hence the restrictive intervals and the post-critical set move continuously, which implies $g$ is infinitely renormalizable with combinatorics $\underline{\mathcal M}$.
\end{proof}
\begin{Lemma}\label{continuity}  If a sequence  $\{f_k\}$ of infinitely renormalizable multimodal maps of type $\bf N$ converges to an infinitely renormalizable multimodal map $f$ of type $\bf N$, then $\mathcal R f_k$ converges to $\mathcal R f$.
\end{Lemma}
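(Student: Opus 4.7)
My plan is to prove continuity of $\mathcal R$ at $f$ by tracking the maximal periodic interval that defines the renormalization as a function of the map. Let $p$ be the renormalization period of $f$, let $J\ni{\bf c_0}$ be the corresponding maximal $p$-periodic interval of an extended map $F$ of $f$, and let $A_0:J\to\I$ be the affine normalization, so that $\mathcal Rf=A_0\circ f^p\circ A_0^{-1}$. Since $f$ is infinitely renormalizable, the boundary cycle $\partial J$ is a repelling periodic cycle of $F^{p{\bf N}}$ (indeed it is the $\alpha$-cycle of $\mathcal Rf$), whose multiplier is bounded away from $1$ in modulus.

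First I would use the implicit function theorem to produce, for each large $k$, a hyperbolic repelling cycle $\partial J_k$ of period $p$ for $F_k$ with $\partial J_k\to\partial J$. Let $J_k$ be the interval bounded by the two points of $\partial J_k$ closest to ${\bf c_0}$; continuity yields $J_k\to J$. Next I would verify that $J_k$ satisfies the combinatorial conditions of Definition~\ref{def of re}: disjoint interiors of the iterates $F_k^i(J_k\times\{0\})$, and exactly one critical orbit passing through each fiber. These are open conditions that $J$ satisfies strictly, so they persist under small $C^3$-perturbations; this gives $p_k\le p$ for all large $k$ and makes $J_k$ a $p$-periodic interval of $F_k$.

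The main obstacle is ruling out $p_k<p$ for infinitely many $k$. Suppose, for a contradiction, that after extracting a subsequence one has $p_k=p'$ for a fixed $p'<p$, with corresponding maximal $p'$-periodic intervals $J'_k$. Since each $f_k$ is itself infinitely renormalizable, Theorem~\ref{realbounds} applied to the principal nest of $F_k$ provides uniform real bounds along the sequence; in particular $|J'_k|/|\I|$ stays bounded away from $0$, so $J'_k$ does not collapse. Passing to a further subsequence, $\partial J'_k$ converges to a repelling periodic cycle of $F$ of period dividing $p'$, and $J'_k$ converges to a non-degenerate interval $J'$ with $F^{p'{\bf N}}(J'\times\{0\})\subset J'\times\{0\}$. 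The remaining combinatorial conditions of Definition~\ref{def of re} are closed conditions on non-degenerate limits, so $J'$ would be a $p'$-periodic interval of $F$ with $p'<p$, contradicting the minimality of $p$.

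Once $p_k=p$ is secured for all large $k$, the affine normalization $A_{0,k}$, which is determined by the boundary periodic points of $J_k$, converges to $A_0$, and $f_k^p\to f^p$ in $C^3$ on a neighborhood of $J$. Therefore
\[
\mathcal Rf_k \;=\; A_{0,k}\circ f_k^{p}\circ A_{0,k}^{-1}\;\longrightarrow\;A_0\circ f^{p}\circ A_0^{-1}\;=\;\mathcal Rf.
\]
The hard step will be the uniform non-degeneracy of the hypothetical smaller intervals $J'_k$ and the passage of the combinatorial conditions to the limit; this is exactly where the assumption that each $f_k$ is itself infinitely renormalizable enters, since it licenses applying Theorem~\ref{realbounds} uniformly across the sequence.
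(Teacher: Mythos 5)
Your overall strategy mirrors the paper's: locate the repelling boundary cycle of the restrictive interval $J$, continue it to $f_k$ to obtain $p_k\le p$ for large $k$, then take a limit of the actual restrictive intervals of $f_k$ to rule out $p_k<p$, and finally pass the affine normalizations to the limit. The one place where you diverge, and where your argument does not actually go through, is the non-degeneracy of the hypothetical smaller intervals $J'_k$.

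You appeal to Theorem~\ref{realbounds} to claim that $|J'_k|/|\I|$ stays bounded away from $0$. But Theorem~\ref{realbounds} gives a \emph{lower} bound $\lambda_{B_{k+1}}=|B_{k+1}|/|B_{k+2}|>\lambda>1$ on scaling factors, which is an \emph{upper} bound on how slowly the principal nest contracts; it says the levels of the nest shrink geometrically. That is the opposite of a lower bound on $|I_\infty|$, and in any case turning it into a lower bound would require uniform control on the height of the nest and the lengths of central cascades, none of which you establish. The paper avoids this entirely with a much simpler observation: the restrictive intervals $[-\widehat a_k,\widehat a_k]$ all contain the critical point $0$, so if they degenerated, the limit would be $\{0\}$, and then $f^q(0)=0$ would make $0$ a superattracting periodic point of $f$, which is impossible for an infinitely renormalizable map. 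Once non-degeneracy is secured this way, the rest of your argument (closedness of the combinatorial conditions in Definition~\ref{def of re} on non-degenerate limits, contradiction with minimality of $p$, convergence of the affine normalizations) is essentially what the paper does, and it is correct. Replace the invocation of Theorem~\ref{realbounds} by the superattracting-point argument and the proof is complete.
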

\begin{proof}
 Let $J=[-a,a]$ be the renormlization interval of $f$, then one of $\{-a,a\}$ is a repelling periodic point of $f$. Without loss of generality, we assume $a$ is a repelling periodic point of $f$ with period $p$. Since $f_k$ converges to $f$, there exists $a_k$ such that $a_k$ is a repelling periodic point of $f_k$ with period $p$ and $a_k$ converges to $a$. It follows easily $f^p_k|_{[-a_k,a_k]}$ is affine conjugate to a multimodal maps of type $\bf N$ by the uniform convergency. Thus the renormalization period $\widehat p_k$ of $f_k$ is at most  $p$ for all $k$ large.  \par
Without loss of generality, we can assume the renormalization period of $f_k$ is $q\le p$ for all $k\in \mathbb N$ and the corresponding restrictive interval $[-\widehat a_k,\widehat a_k]$ converges to $[-\widehat a, \widehat a]$. Since $f_k$ converges to $f$ in $C^3$-topology, $f^q([-\widehat a, \widehat a]) \subset [-\widehat a, \widehat a]$. Clearly $-\widehat a\ne \widehat a$, for otherwise $\widehat a=0$ will be a supperattracting periodic point of $f$, which is impossible as $f$ is infinitely renormalizable. Thus, $[-\widehat a, \widehat a]$ is a periodic interval of $f$. Note that $f^q|_{[-\widehat a, \widehat a]}$ is a pre-renormalization of $f$. Hence, $\mathcal R f_k$ converges to $\mathcal R f$. 

\end{proof}

\begin{TC} For any totally $R$-invariant precompact subset $\mathcal A' \subset \mathcal I$, the restriction $\mathcal R^{-1}|_{\mathcal A'}$ of the anti-renormalization operator $\mathcal R^{-1}$ to $\mathcal A'$ is continuous.
\end{TC}
\begin{proof}For any sequence $\{f_k\} \subset \mathcal A'$  converging to $f \in \mathcal A'$, we prove $\mathcal R^{-1}f_k \to \mathcal R^{-1}f$ as $k \to \infty$. Let ${\bf N}p_k$ be the renormalization period of $f_k$ for all $k\in \mathbb N$.\par
\medskip
{\bf Claim} $\sup\limits_{k} p_k<\infty$. 
\medskip

{\em Proof of the Claim} : Arguing by contradiction,  we may assume $p_k\to \infty$. Then by the Key Lemma, we know that either 
$$f=\lim\limits_{k\to \infty}\mathcal R(\mathcal R^{-1}f_k)$$
 is a polynomial of degree $2^n$ or it has bounded real trace. But both of these two cases are impossible. Indeed, every polynomial of degree $2^n$ cannot be anti-renormalizable due to the degree. On the other hand,  $f$ is infinitely anti-renormalizable, let $g_{-k}=\mathcal R^{-k}f$. By Theorem~\ref{realbounds}, we know $g_{-k}$ can be extended to $[-\lambda^k,\lambda^k]$ for all $k\in \mathbb N$.  Since $f$ is affinely conjugate to an iteration of $g_{-k}$ for all $k\in \mathbb N$, $f$ cannot have bounded real trace.\par

\medskip

Since the  renormalization periods ${\bf N}p_k$ of $\mathcal R^{-1}f_k$ are bounded, then by a similar argument in the proof of Lemma~\ref{cc}, we can conclude all the limit point $g$ of $\{\mathcal R^{-1}f_k\}$ is renormalizable and $\mathcal Rg=\lim\limits_{k_n}\mathcal R(\mathcal R^{-1}f_{k_n})=f$. Thus $\mathcal R^{-1}f_k$ converges to $\mathcal R^{-1}f$.\par

\end{proof}
The following corollary follows immediately from Lemma~\ref{continuity} and Theorem~C.
\begin{Coro}For any totally $R$-invariant precompact subset $\mathcal A' \subset \mathcal I$, the restriction $\mathcal R|_{\mathcal A'}$ of the  renormalization operator $\mathcal R$ to $\mathcal A'$ is a self-homeomorphism.
\end{Coro}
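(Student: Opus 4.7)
The plan is a direct assembly of the three ingredients that have just been established, so the argument reduces to verifying, in sequence, that $\mathcal R|_{\mathcal A'}$ is a bijection, is continuous, and has a continuous inverse.

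For bijectivity, I would use the two defining hypotheses on $\mathcal A'$: total $\mathcal R$-invariance, $\mathcal R(\mathcal A')=\mathcal A'$, gives surjectivity onto $\mathcal A'$ for free, while injectivity follows from Smania's theorem that $\mathcal R:\mathcal I\to\mathcal I$ is itself injective (\cite[Proposition~2.2]{smania2016solenoidal}), which was the fact invoked earlier to define the anti-renormalization operator. Hence $\mathcal R|_{\mathcal A'}$ is a bijection from $\mathcal A'$ onto itself, and the set-theoretic inverse coincides with $\mathcal R^{-1}|_{\mathcal A'}$.

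For continuity of the forward map, I would cite Lemma~\ref{continuity}: if $f_k\to f$ in $\mathcal A'\subset\mathcal I$, then $f$ is also infinitely renormalizable and $\mathcal R f_k\to \mathcal R f$. For continuity of the inverse, I would directly apply Theorem~C, which is precisely the statement that $\mathcal R^{-1}|_{\mathcal A'}$ is continuous on any totally $\mathcal R$-invariant precompact subset. Combining the three gives that $\mathcal R|_{\mathcal A'}:\mathcal A'\to\mathcal A'$ is a continuous bijection with continuous inverse, i.e., a self-homeomorphism.

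The main obstacle is not in this corollary at all — all the real work is already packaged into Theorem~C (which in turn rests on the Key Lemma and the compactness results for transition maps developed in Section~2). Once Theorem~C is in hand, the corollary is essentially a bookkeeping step; the only subtlety worth flagging is that one should not invoke the general ``continuous bijection from a compact space is a homeomorphism'' principle, since $\mathcal A'$ is assumed only precompact rather than compact, so continuity of the inverse must genuinely be obtained from Theorem~C rather than from abstract topology.
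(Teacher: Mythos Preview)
Your proof is correct and matches the paper's approach exactly: the paper simply states that the corollary follows immediately from Lemma~\ref{continuity} and Theorem~C, which is precisely the decomposition you give (with the bijectivity coming from total invariance plus Smania's injectivity). Your observation that precompactness rather than compactness forces one to invoke Theorem~C directly, rather than abstract topology, is a helpful clarification the paper leaves implicit.
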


\section{Polynomial-like extension for multimodal maps of type $\bf N$}
In order to prove Theorem~B, we will use the complex method, that is, we extend $f$ to the complex plane and use the tools in complex dynamics. In this section,  we will first recall some definitions and results in polynomial dynamics. Then we prove each hybrid leaf is homeomorphic to a standard model space $\mathcal E_{\bf N}$, which is simply connected.\par
We say $P$ is a {\em polynomial of type $\bf N$} if there exists quadratic polynomials $P_j=a_jz^{2}-a_j-1$ $(j=0,\cdots,{\bf N}-1)$ such that $P=P_{{\bf N}-1}\circ\cdots\circ P_0$. And  $(P_0,\cdots, P_{\bf N-1})$  is called a {\it quadratic decomposition} of $P$. \par
A {\em polynomial-like map} $f:U\to V$ of degree $d$ is a holomorphic proper map of degree $d$ where $U \Subset V $ are quasidisks. The {\em filled Julia set} of  $f$ is $$K(f):=\{z\mid f^n(z) \in U,~~ \forall n\ge 0\}$$ and the boundary of $K(f)$ is call the {\em Julia set} of $f$. The idea of polynomial-like map was first introduced by Douady and Hubbard \cite{DH85}.\par
In this paper, we will consider a special kind of polynomial-like maps which is called polynomial-like map of type $\bf N$. 

\begin{definition}
A polynomial-like map $f$ is called a polynomial-like map of type $\bf N$ if there exists quasidisks $U=U_0\Subset U_1\Subset \cdots \Subset U_{{\bf N}-1}\Subset U_n=V$ and holomorphic branched double covering  $f_j:U_j \to U_{j+1}$  with a unique critical point $z=0$ $(j=0,\cdots,{\bf N}-1)$ such that $f=f_{{\bf N}-1}\circ\cdots\circ f_0:U \to V$ is a  polynomial-like representatives of $f$.
\end{definition}
We will normalize the polynomial-like map of type $\bf N$ so that $0,-1\in U$ and $f(-1)=-1$.\par

Given a polynomial-like map $f$, the corresponding {\em polynomial-like germ} $[f]$ is an equivalence class of polynomial-like maps $\tilde f$ such that $K(\tilde f)=K(f)$ and $\tilde f=f$ near the filled Julia set $K(f)$. The modulus of a polynomial-like germ is defined as:
\[\mod f= \sup\mathrm{mod} (V\setminus \overline U),\]
where the supremum is taken over all polynomial-like representatives $\tilde f:U\to V$ of $f$.\par
In this paper, we will not distinguish the notion of a polynomial-like map and its corresponding polynomial-like germ, and let $\mathcal{C}_{\bf N}$ be the family of all the normalized polynomial-like germs of type $\bf N$ with connected Julia set. For any $\delta>0$,  $\mathcal C_{\bf N}(\delta)$ is used to denote the set of $f\in \mathcal C_{\bf N}$ with $\mod f\ge \delta$. Note that $\mathrm{mod}~f=\infty$ if and only if $f$ is exactly a polynomial.

\subsection{Topology and Complex analytic structure}
Given a Jordan disk, let $\mathcal B_U$ be the Banach space of functions which are holomorphic on $U$ and continuous up to the boundary $\partial U$ and denote $\parallel\cdot\parallel_U$ the $L_\infty$-norm of $\mathcal B_U$.\par
Now we define the topology of $\mathcal C_{\bf N}$ as following. We say {\em $f_k$ converges to $f$ in $\mathcal C_{\bf N}$ } if and only if there exists quasidisk $W\Supset K(f_k)$ such that $f_k$, $k=1,2,\ldots$ and $f$ are well defined on $\overline W$ for all sufficiently large $k$ and $\parallel f_k-f \parallel_W \to 0$ as $k\to \infty$. We say $\mathcal K \subset \mathcal C_{\bf N}$ is closed if for every sequence $\{f_k\} \subset \mathcal K$, the limit points of $\{f_k\}$  also belong to $\mathcal K$.\par 

Two polynomial-like germ $f$ and $g$ are called {\em hybrid equivalent}, if there exists quasiconformal map $h:\C\to \C$ such that $h\circ f=g\circ h$ near $K(f)$ and $\bar\partial h=0$ almost everywhere on $K(f)$.\par
For  ${\bf b} =(b_0,\cdots,b_{{\bf N}-1}) \in \C^{\bf N}$, let $P_{\bf b}:=(b_{{\bf N}-1}z^{2}-b_{{\bf N}-1}-1)\circ\cdots\circ (b_0z^{2}-b_0-1)$, $\tilde{\mathcal H}({\bf b}):=\{f\mid f~ \text{is hybrid equivalent to} ~P_{\bf b}\}$  and let $\mathcal H({\bf b})$ be  the component of $\tilde{\mathcal  H}({\bf b})$ containing $P_{\bf b}$.  Such an $\mathcal H(\bf b)$ is called a {\em hybrid leaf} and it has a natural topology induced from the topology of $\mathcal C_{\bf N}$. A hybrid leaf is called {\em real symmetric} if it contains a real map.  Let $\mathcal H({\bf b}, \epsilon)$ denote the set of all the $f\in \mathcal H(\bf b)$ with $\mathrm{mod} f\ge\epsilon$, then $\mathcal H({\bf b}, \epsilon)$ is a precompact set and any precompact subset $\mathcal K$ of $\mathcal H(\bf b)$ is contained in some $\mathcal H({\bf b}, \epsilon)$ (see \cite[section~5]{mcmullen2016complex}).\par

\begin{Theorem}For every polynomial-like map $f$ of type $\bf N$, there exists a polynomial of type $\bf N$ hybrid equivalent to $f$.
\end{Theorem}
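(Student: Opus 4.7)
The plan is to prove this by adapting the classical Douady--Hubbard straightening theorem to the class $\mathcal{C}_{\bf N}$, mimicking the usual MRMT-based proof while tracking the factorization $f=f_{{\bf N}-1}\circ\cdots\circ f_0$ at every step so that the polynomial produced at the end inherits the decomposition into $\bf N$ degree-$2$ pieces.

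First I would fix a polynomial model. Choose real parameters $a_0,\ldots,a_{{\bf N}-1}>0$ sufficiently large and form $P=P_{{\bf N}-1}\circ\cdots\circ P_0$ with $P_j(z)=a_jz^2-a_j-1$. For such $P$ one may select nested quasi-disks $D_0\Subset D_1\Subset\cdots\Subset D_{\bf N}$ so that each $P_j:D_j\to D_{j+1}$ is a proper degree-$2$ branched cover with $K(P)\subset D_0$; this is the target polynomial-like map of type $\bf N$. Next I would build a quasiconformal external conjugacy respecting the factorization. Pick any q.c.\ diffeomorphism $\Psi_{\bf N}:\partial U_{\bf N}\to\partial D_{\bf N}$. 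Since $f_j:\partial U_j\to\partial U_{j+1}$ and $P_j:\partial D_j\to\partial D_{j+1}$ are both two-fold coverings of Jordan curves, $\Psi_{j+1}$ lifts (with a consistent branch choice) to a q.c.\ diffeomorphism $\Psi_j:\partial U_j\to\partial D_j$ satisfying $\Psi_{j+1}\circ f_j=P_j\circ\Psi_j$; iterate for $j={\bf N}-1,\ldots,0$ and interpolate q.c.\ across each annulus $U_{j+1}\setminus\overline{U_j}$ to produce a q.c.\ map $H:V\setminus U_0\to D_{\bf N}\setminus D_0$. Then extend $H$ to $V\setminus K(f)$ by repeatedly pulling back through the full dynamics of $f$ (matched with $P$), which preserves the dilatation and the level-wise conjugacy $\Psi_{j+1}\circ f_j=P_j\circ\Psi_j$ at each pull-back.

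Finally I would apply the measurable Riemann mapping theorem. Set $\mu=\bar\partial H/\partial H$ on $V\setminus K(f)$ and extend $\mu=0$ on $K(f)$. By the level-wise consistency of $H$ we have $(f_j)^{*}\mu|_{U_{j+1}}=\mu|_{U_j}$ for each $j$, so integrating $\mu|_{U_j}$ by MRMT yields q.c.\ homeomorphisms $\phi_j$ for which each $g_j:=\phi_{j+1}\circ f_j\circ \phi_j^{-1}$ is holomorphic of degree $2$. The external conformal conjugacy $\phi_j\circ H^{-1}:D_j\setminus K(P)\to \phi_j(U_j)\setminus \phi_j(K(f))$ intertwines $P_j$ with $g_j$, so each $g_j$ extends to a quadratic polynomial and the composition $g=g_{{\bf N}-1}\circ\cdots\circ g_0$ extends to a polynomial of degree $2^{\bf N}$ hybrid equivalent to $f$. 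An affine renormalization at each level (one complex parameter per level, balanced against the normalizations ``critical point at $0$'' and ``fixed point at $-1$'') brings each $g_j$ to the required form $\alpha_jz^2-\alpha_j-1$, yielding a polynomial of type $\bf N$ hybrid equivalent to $f$.

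The main obstacle is the simultaneous level-by-level bookkeeping: one must verify that the lifts and dynamical pull-backs can be carried out coherently across all $\bf N$ two-fold covers without accumulating monodromy, and that this coherence transfers to the Beltrami differential so that MRMT outputs an actual \emph{factorization} into quadratic maps rather than just a single degree-$2^{\bf N}$ polynomial. Both points reduce to consistently choosing branches of two-fold covers --- a bookkeeping-heavy but essentially routine extension of the one-level Douady--Hubbard argument.
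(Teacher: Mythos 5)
Your overall strategy (run the Douady--Hubbard machinery while tracking the factorization so that the straightening itself factors) is the right one, and it matches the references the paper cites for this theorem (\cite[Proposition~4.1]{SmaniaPhase}, \cite[Theorem~A]{IK}, \cite{shishikurasurgery}); the paper itself gives no argument, only these citations. However, there is a genuine gap at the point where you assert $(f_j)^{*}\mu|_{U_{j+1}}=\mu|_{U_j}$. Your $H$ is built by interpolating quasiconformally and \emph{independently} across each sub-annulus $U_{j+1}\setminus\overline{U_j}$, and then pulling back by the \emph{full} map $f$. Pulling back by $f$ forces $f^{*}\mu=\mu$, but it does not force $f_j^{*}\mu=\mu$ for each intermediate $j$: the identities $\Psi_{j+1}\circ f_j=P_j\circ\Psi_j$ hold only on $\partial U_j$, and nothing constrains $H\circ f_j=P_j\circ H$ in the interiors of the sub-annuli. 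For instance, for $z$ in the interior of $U_1\setminus\overline{U_0}$, the values $\mu(z)$ and $\mu(f_1(z))$ come from interpolations chosen with no dynamical relation between them, so $f_1^{*}\mu(z)=\mu(z)$ fails in general. Consequently MRMT applied to your $\mu$ straightens $f$ to some degree-$2^{\bf N}$ polynomial, but does \emph{not} deliver the factorization into quadratic pieces; the ``level-wise consistency of $H$'' you invoke is precisely the claim that is missing.

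The standard repair, in line with the cited surgery arguments, is to push the level structure into the quasiregular extension rather than into the external conjugacy, and to allow the integrating map to change with the level. Extend each $f_j:U_j\to U_{j+1}$ to a global degree-$2$ quasiregular branched cover $\hat f_j:\C\to\C$ with a single critical point, equal to $f_j$ on $U_j$ and to a quadratic polynomial near $\infty$, and set $\hat F:=\hat f_{{\bf N}-1}\circ\cdots\circ\hat f_0$, a degree-$2^{\bf N}$ quasiregular map equal to $f$ on $U_0$ and to a polynomial near $\infty$. Build an $\hat F$-invariant Beltrami coefficient $\mu_{\bf N}$ with $\mu_{\bf N}=0$ on $K(f)$ as in Douady--Hubbard, and then define $\mu_j:=\hat f_j^{\,*}\mu_{j+1}$ for $j={\bf N}-1,\dots,0$. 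Now $f_j^{*}\mu_{j+1}=\mu_j$ on $U_j$ holds \emph{by construction}, $\|\mu_j\|_\infty<1$ for every $j$, and crucially $\mu_0=\hat F^{*}\mu_{\bf N}=\mu_{\bf N}$. Integrating the $\mu_j$ with consistent normalizations gives q.c.\ maps $\phi_j$ with $\phi_0=\phi_{\bf N}$; each $g_j:=\phi_{j+1}\circ\hat f_j\circ\phi_j^{-1}$ is an entire degree-$2$ map, hence a quadratic polynomial (which your affine renormalization argument brings to the form $\alpha_jz^2-\alpha_j-1$), and $g=g_{{\bf N}-1}\circ\cdots\circ g_0=\phi_0\circ f\circ\phi_0^{-1}$ near $K(f)$, so $\phi_0$ is a hybrid conjugacy. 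The idea missing from your write-up is that one must let the conjugating map vary with the level, recovering a single hybrid conjugacy only at the end through the identity $\mu_0=\mu_{\bf N}$.
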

\begin{proof}The proof is based on  quasiconformal surgery. See \cite[Proposition~4.1]{SmaniaPhase}, \cite[Theorem~A]{IK} and also \cite{shishikurasurgery}.

\end{proof}
On the contrary, we have
\begin{Theorem}\label{decomposition}If a polynomial-like map $f:U \to V$ is hybrid equivalent to some polynomial $P$ of type $\bf N$, then $f$ is a polynomial-like map of type $\bf N$.

\end{Theorem}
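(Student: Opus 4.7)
The plan is to transport the filtration witnessing the type-$\bf N$ structure of $P$ back to the dynamical plane of $f$ through the hybrid conjugacy, and then upgrade the resulting quasiregular factorization to a genuinely holomorphic one by a surgery argument in the spirit of Douady--Hubbard and Inou--Kiwi.

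On the polynomial side, I would fix a large quasidisk $V_{\bf N}\supset K(P)$ and recursively define $V_j:=P_j^{-1}(V_{j+1})$. Each $P_j:V_j\to V_{j+1}$ is a holomorphic degree-two branched covering with critical point $0$, and the chain $V_0\Subset V_1\Subset\cdots\Subset V_{\bf N}$ realizes $P$ as a polynomial-like map of type $\bf N$. Let $h:\C\to\C$ be the quasiconformal hybrid equivalence, so that $h\circ f=P\circ h$ on a neighborhood of $K(f)$ and $\bar\partial h=0$ on $K(f)$. Pulling back through $h$ yields quasidisks $U_j:=h^{-1}(V_j)$ and quasiregular degree-two branched covers $\widetilde f_j:=h^{-1}\circ P_j\circ h:U_j\to U_{j+1}$ whose composition $\widetilde f_{{\bf N}-1}\circ\cdots\circ\widetilde f_0$ coincides with $f$ on a neighborhood of $K(f)$; in particular the critical tree of $f$ inherits from $P$ the branching required for a type-$\bf N$ factorization.

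The remaining task is to replace the quasiregular $\widetilde f_j$ by genuinely holomorphic maps realizing the same composition near $K(f)$. Each quadratic $P_j$ has the deck involution $z\mapsto -z$, which transports under $h$ to a quasiconformal involution $\iota_j$ on $U_j$; because $h$ is conformal on $K(f)$, every $\iota_j$ is conformal there as well. Starting from the standard complex structure on $V$, I would spread it under backward orbits of the $\widetilde f_j$'s and symmetrize it with respect to the $\iota_j$'s to produce a Beltrami differential $\nu$ on $V$ that is invariant under each $\widetilde f_j$ while vanishing on a neighborhood of $K(f)$. Solving the Beltrami equation for $\nu$ via the measurable Riemann mapping theorem, with a normalization making the integrating quasiconformal map $\Phi$ the identity on $K(f)$ (possible because $\nu\equiv 0$ there), one obtains holomorphic degree-two branched covers $f_j:=\Phi\circ\widetilde f_j\circ\Phi^{-1}$ on $\Phi(U_j)\to\Phi(U_{j+1})$ whose composition equals $f$ on a neighborhood of $K(f)$. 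Since polynomial-like representatives are germs on the filled Julia set, this exhibits $f$ as a polynomial-like map of type $\bf N$.

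The main obstacle is the construction and normalization of $\nu$: one must simultaneously arrange invariance under each $\widetilde f_j$, vanishing on a full neighborhood of $K(f)$, and a normalization of the integrating map that makes the resulting factorization one of $f$ itself rather than of a quasiconformal conjugate. This rests crucially on the conformality of $h$ on $K(f)$, which makes all transported symmetries conformal there, together with a careful quasiconformal surgery across the annular region $V\setminus\overline U$; the relevant surgery machinery is already developed in \cite{IK} and \cite{shishikurasurgery}, which we would invoke directly.
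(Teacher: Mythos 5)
Your approach runs into a genuine obstruction at the final step, and it is exactly the step you flag as ``the main obstacle.'' After you construct the invariant Beltrami differential $\nu$ and integrate it by $\Phi$, the straightened factors compose to $\Phi\circ f\circ\Phi^{-1}$, which is a polynomial-like map of type $\bf N$ that is \emph{hybrid equivalent} to $f$, but is not $f$ itself (as a germ on $K(f)$). Your proposed fix --- normalizing $\Phi$ to be the identity on $K(f)$ --- does not go through, for two independent reasons. First, $\nu$ does not vanish on a \emph{neighborhood} of $K(f)$: the maps $\widetilde f_j=h^{-1}\circ P_j\circ h$ are conformal only a.e.\ on $K(f)$ (since $\bar\partial h=0$ only on $K(f)$), and any $\widetilde f_j$-invariant Beltrami spread back from a fundamental annulus is genuinely nonzero arbitrarily close to $\partial K(f)$. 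Second, and more fundamentally, even if $\nu$ did vanish on an open neighborhood $W\supset K(f)$, the Measurable Riemann Mapping Theorem only determines $\Phi$ up to post-composition with a Möbius transformation; $\Phi|_W$ would be conformal but there is no normalization that forces it to be the identity on $W$, and without $\Phi=\mathrm{id}$ near $K(f)$ the germs $\Phi\circ f\circ\Phi^{-1}$ and $f$ differ. In short, your argument reproves the straightening direction (the preceding theorem in the paper, already credited to \cite{IK,shishikurasurgery}) but not the converse asserted here.

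The paper avoids this by never conjugating $f$. It builds auxiliary quasiconformal maps $\varphi_j,\psi_j$ and defines
\[
f_j := \psi_{j+1}^{-1}\circ\varphi_{j+2}^{-1}\circ P_j\circ\varphi_{j+1}\circ\psi_j
\quad(1\le j\le{\bf N}-1),
\qquad
f_0 := \psi_1^{-1}\circ\varphi_2^{-1}\circ P_0\circ h,
\]
with $f_{{\bf N}-1}=h^{-1}\circ P_{{\bf N}-1}\circ\varphi_{\bf N}\circ\psi_{{\bf N}-1}$. The telescoping cancellation gives $f_{{\bf N}-1}\circ\cdots\circ f_0=h^{-1}\circ P\circ h=f$ on the nose, with no global conjugation to undo. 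Each $\psi_j$ ($j\ge1$) is chosen, via the Measurable Riemann Mapping Theorem applied on the bounded domain $U_j$, to kill the Beltrami coefficient so that $f_j$ is holomorphic and proper; this is a local straightening of each factor and does not touch $f$. The factor $f_0$ is then quasiregular a priori, but writing $f=G\circ f_0$ with $G=f_{{\bf N}-1}\circ\cdots\circ f_1$ holomorphic and differentiating gives $0=\bar\partial f = G'\cdot\bar\partial f_0$ a.e., so $\bar\partial f_0=0$ a.e.\ and Weyl's lemma forces $f_0$ to be holomorphic. This last step --- letting the one unstraightened factor absorb the residual non-conformality and deducing its holomorphy from that of the total composition --- is the idea that is missing from your argument and cannot be replaced by a global normalization of $\Phi$.
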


\begin{proof}
Let $(P_0,\cdots,P_{{\bf N}-1})$ be a quadratic decomposition of $P$ and $h$ be a hybrid conjugacy between $f$ and $P$. Select quasidisks $U_0\Subset U_1\Subset\cdots\Subset U_{\bf N}$ such that $U_0=U$ and $U_{\bf N}=V$.   Let $W_{\bf N}=h(V)$, $W_{{\bf N}-1}=P_{{\bf N}-1}^{-1}(W_{\bf N})$, $W_{{\bf N}-2}=P_{{\bf N}-2}^{-1}(W_{{\bf N}-1})$, $\cdots$, $W_0=P_{1}^{-1}(W_1)=h(U)$. Choose quasiconformal mappings $\varphi_j:U_j\to W_j$ such that $\varphi(U_{j-1})=W_{j-1}$ and $\varphi_j(0)=0$ for all $1\le j<{\bf N}$. Then $h^{-1}\circ P_{{\bf N}-1}\circ \varphi_{\bf N}  :U_{{\bf N}-1}\to U_{\bf N}$ is a  quasiregular map. Thus, we can choose a quasiconformal map $\psi_{{\bf N}-1}:U_{{\bf N}-1}\to U_{{\bf N}-1}$ such that $\psi_{{\bf N}-1}(0)=0$ and 
$$f_{{\bf N}-1}:=h^{-1}\circ P_{{\bf N}-1}\circ \varphi_{\bf N}\circ \psi_{{\bf N}-1}:U_{{\bf N}-1}\to U_{\bf N}$$
 is a holomorphic proper map. Similarly, there exists quasiconformal map $\psi_{{\bf N}-2}:U_{{\bf N}-2}\to U_{{\bf N}-2}$ such that $\psi_{{\bf N}-2}(0)=0$ and
 $$f_{{\bf N}-2}:=\psi^{-1}_{{\bf N}-1}\circ\varphi_{\bf N}^{-1}\circ P_{{\bf N}-2}\circ \varphi_{{\bf N}-1}\circ\psi_{{\bf N}-2}:U_{{\bf N}-2}\to U_{\bf N-1}$$ 
is a holomorphic proper map. By induction, there exist quasiconformal maps $\psi_j:U_j\to U_j$ such that $\psi_j(0)=0$ and 
$$f_{j}:=\psi^{-1}_{j+1}\circ\varphi_{j+2}^{-1}\circ P_{j}\circ \varphi_{j+1}\circ\psi_{j}:U_j \to U_{j+1}$$
 are holomorphic proper maps ($1\le j\le {\bf N}-2$). Set $f_{0}:=\psi_1^{-1}\circ \varphi_2^{-1}\circ P_0\circ h$, then $f_{{\bf N}-1}\circ \cdots\circ f_1\circ f_0=h^{-1}\circ P_{{\bf N}-1}\circ\cdots\circ P_1\circ P_0\circ h=f$. Now we only need to check whether $f_{0}$ is holomorphic. Denote $f_{{\bf N}-1}\circ\cdots\circ f_2\circ f_1$ by $G$, by differentialing we obtain a.e. $z$
 \[0=\frac{\partial f}{\bar{\partial} z}=\frac{\partial G}{\partial w}\frac{\partial f_0}{\bar\partial z}+\frac{\partial G}{\bar\partial w}\frac{\bar{f}}{\bar\partial z}={G'}\frac{\partial f_0}{\bar\partial z}.\]
It follows from Weyl's Lemma that $f_{0}$ is holomorphic and we are done.

\end{proof}

Modifying Lyubich's argument of complex analytic variety in \cite{lyubich1999feigenbaum}and \cite{regularorstochastic}, we can define the complex analytic structure of $\mathcal H(\bf 0)$ as following. Set $\mathcal B^0_U:=\{f\in \mathcal B_U\mid f^{(j)}(0)=0,j=1,2,\cdots,2^{\bf N}-1\}$, then it is also a Banach space under the $L_\infty$-norm. If $f:U \to V$ is a polynomial-like representative of $f\in \mathcal H(\bf 0)$,  it is easy to see $g \in \mathcal B^0_U(f,\epsilon)$ has a polynomial-like restriction on a quasidisk slightly smaller than $U$ for $\epsilon$ sufficiently small (where $\mathcal B^0_U(f,\epsilon)$ is an $\epsilon$-neighborhood of $f$ in $\mathcal B^0_U$.) Thus, we have a natural continuous inclusion $\mathcal J_{U,f,\epsilon}:\mathcal B^0_U(f,\epsilon) \to \mathcal H(\bf 0)$ and $\mathcal B^0_U(f,\epsilon)$ is called {\em a Banach slice of $\mathcal H(\bf 0)$ centered at $f$}. For convenience of notion, we use $\mathcal S_U$ to stand for a Banach slice without specifying $f$ and $\epsilon$. Roughly speaking, $\mathcal J_U:\mathcal S_U \to \mathcal H(\bf 0)$ can be understood as the local chart of $\mathcal H(\bf 0)$ and Lyubich \cite{lyubich1999feigenbaum,
regularorstochastic} proved:

\begin{Lemma}[Lyubich]The family of local charts $\mathcal J_U$ satisfies the following properties:
\begin{enumerate}
\item[P1]Countable base and Compactness. There is a countable family of Banach Slices $\mathcal S_i=\mathcal S_{U_i}$ such that for any $f \in\mathcal H(\bf 0) $,  the Banach Slice $\mathcal S_{U}$ centered at $f$ is compactly contained in some $\mathcal S_i$.  
\item[P2]Lifting of analyticity. If $W\subset U$, then the inclusion map $\mathcal J_{U,W}:\mathcal S_U \to \mathcal B_W$ is complex analytic. Moreover, let $U \Supset V$. Let us consider a locally
bounded map $\phi: \mathcal V \to \mathcal B_V$ defined on a domain $\mathcal V$ in some Banach space.
Assume that the map $\mathcal J_{V,W}\circ \phi: \mathcal V\to \mathcal B_W$ is analytic, then the map 
$\mathcal J_{V,U}\circ \phi: \mathcal V\to \mathcal B_U$ is also analytic.
\item[P3]Density. If $W \subset U$, then $\mathcal B_U$ is dense in $\mathcal B_W$.
\end{enumerate}
\end{Lemma}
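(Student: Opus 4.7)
The plan is to follow Lyubich's original argument from \cite{lyubich1999feigenbaum, regularorstochastic}, verifying that each step carries through without modification in our multimodal setting. The main inputs are Montel's theorem for uniformly bounded holomorphic families, the Cauchy integral representation of holomorphic functions on quasidisks, and the standard Banach-valued principle that a locally bounded weakly holomorphic map is strongly holomorphic.

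For P1, I would fix, once and for all, a countable cofinal family of quasidisks $\{U_i\}$---for example Beurling--Ahlfors images of polygons with rational vertices---so that every quasidisk $U$ is compactly contained in some $U_i$ arbitrarily close to $U$ in Hausdorff distance. Given $f \in \mathcal H(\bf 0)$ with polynomial-like representative $f:U\to V$, choose $U_i \Supset U$ close enough that $f$ still admits a polynomial-like restriction $U_i \to V_i$. Shrinking the radius of $\mathcal S_{U_i}$ centred at $f$ ensures every element in this slice restricts to a polynomial-like map in $\mathcal S_U$; the inclusion $\mathcal S_{U_i} \hookrightarrow \mathcal S_U$ is then compact by Montel's theorem, since a uniformly bounded family in $\mathcal B_{U_i}$ is normal on any subdomain compactly contained in $U_i$.

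For P2, the analyticity of $\mathcal J_{U,W}$ is immediate because it is (the restriction to a Banach slice of) a bounded linear operator between Banach spaces, and bounded linear maps are automatically complex analytic. For the lifting statement, the key is the Banach-valued holomorphy criterion: a locally bounded map $\phi$ into a Banach space of holomorphic functions on a domain is holomorphic if and only if each evaluation functional $g \mapsto g(z)$ composed with $\phi$ is holomorphic in the scalar sense. Weak holomorphy of $\mathcal J_{V,W}\circ \phi$ directly yields the holomorphy of $\lambda \mapsto \phi(\lambda)(z)$ for each $z \in W$; using Cauchy's integral formula with contour in $W$ to reproduce values at points of $V$ slightly beyond $W$, combined with the local boundedness hypothesis in $\mathcal B_V$, one upgrades this weak holomorphy to strong holomorphy into the larger target Banach space. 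Finally, P3 is a standard Runge/Mergelyan approximation on quasidisks: any $g\in \mathcal B_W$ can be approximated uniformly on $\overline W$ by entire functions, hence a fortiori by elements of $\mathcal B_U$.

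The main obstacle to watch for throughout is compatibility with the vanishing-at-zero constraints defining the ambient subspace $\mathcal B^0_U$---namely $g^{(j)}(0)=0$ for $1\le j \le 2^{\bf N}-1$. Because these are closed linear conditions at the single point $0$, they are preserved under the Montel compactness used in P1, and under the Cauchy reproduction in P2 (the Cauchy kernel depends only on boundary values and is local at interior points, so it respects any prescribed jet at $0$). For P3, any approximating entire function may be modified by a correction polynomial of degree $< 2^{\bf N}$ chosen to match its jet at $0$ to that of the target, after which the modified approximants sit inside $\mathcal B^0_U$ and still converge uniformly on $\overline W$. Hence the multimodal constraint introduces no genuine new difficulty and the three properties transfer from the unimodal Lyubich framework essentially verbatim.
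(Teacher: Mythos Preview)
The paper does not give its own proof of this lemma; it is stated as a result due to Lyubich and attributed directly to \cite{lyubich1999feigenbaum, regularorstochastic}. Your proposal to reproduce Lyubich's original argument, together with the observation that the finite-codimension linear constraints defining $\mathcal B^0_U$ cause no difficulty, is therefore entirely in line with the paper's approach.
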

A space with properties $P_1-P_3$ is called a complex analytic variety, then $\mathcal H(\bf 0)$ is a complex analytic variety.\par
A map $\phi:\D \to \mathcal H(\bf 0)$ is called  analytic  if for any $z\in \D$, there exists a small disk $\D(z,\delta)$ and a Banach slice $\mathcal S_U$ such that $\phi(\D(z,\delta)) \subset \mathcal S_U$ and the restriction $\phi|_{\D(z,\delta)}:\phi(\D(z,\delta)) \to\mathcal S_U$ is analytic in the Banach sense. Clearly, $\phi$ is an analytic implies that $\phi$ is continuous. 
\subsection{External maps of polynomial-like germs}
A real analytic circle map $g:\T \to \T$ is called expanding if there exists $k \ge 1$ such that $|\mathrm Dg^k(z)|>1$ for all $z \in \T$ where $\mathrm{D}$ denotes the derivative with respect to $z$.\par
Let $\E_{\bf N}$ be the family of real analytic expanding circle covering maps $g:\T \to \T$ of degree $2^{\bf N}$ normalized so that $g(1)=1$. $\E_{\bf N}$ is simply connected (\cite[Lemma~2.1]{Fullhorseshoeunimodal}). Since $g$ is real analytic and expanding, it can be extended to be a holomorphic covering $g:U \to V$ of degree $2^{\bf N}$ where $U \Subset V$  are annular neighborhood of $\T$. Similarly, we can define the modular of expanding circle maps as:
\[\mod g=\sup \mathrm{mod}(V\setminus \overline{U\cup \D}),\]
where the supremum is taken over all extensions $g:U \to V$ of $g$.\par

We will use the Inductive limit topology of $\E_{\bf N}$ (see \cite[Appendix~2]{lyubich1999feigenbaum}) .  In this topology, a sequence $g_k \in \E_{\bf N}$ converges to $g \in \E_{\bf N}$ if there exists a neighborhood $W$ of $\T$ such that all the $g_k$ admit a holomorphic extension to $W$, and $g_k \to g$  uniformly on $W$, i.e. $\sup\limits_{z \in W}|g_k(z)-g(z)| \to 0$.\par
Let $f \in \mathcal{C}_{\bf N}$, consider the B\"ottcher coordinate $\phi_f:\C\setminus K(f) \to \C\setminus \overline{\D}$, then $g=\phi_f\circ f\circ \phi^{-1}_f$ is well defined in a small outer neighborhood of $\T$, by Schwarz reflection principle, $g$ can be extended to a holomorphic expanding map of degree $2^{\bf N}$ in a neighborhood of $\T$. Such a map $g$ is unique up to a rotation conjugation, thus it can be normalized so that $g \in \E_{\bf N}$ and called an external map of $f$. Unfortunately, such an external map $g$ may not be unique. However, we can construct a canonical external map $g\in \E_{\bf N}$ from a polynomial-like germ $f\in \mathcal C_{\bf N}$. Indeed, we prove the following theorem.
\begin{Theorem}\label{externalstructure}For every ${\bf b}\in \C^{\bf N}$, there exists a homeomorphism $\mathcal I_{\bf b}:\E_{\bf N} \to \mathcal H(\bf b)$. Moreover, $\mod \mathcal I_{\bf b}(g)=\mod g $ for all $g \in \E_{\bf N}$.
\end{Theorem}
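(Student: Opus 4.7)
The plan is to construct $\mathcal I_{\bf b}$ by the standard Douady–Hubbard quasiconformal surgery that replaces the external dynamics of $P_{\bf b}$ by a prescribed $g\in\mathcal E_{\bf N}$. Pick a polynomial-like representative of $P_{\bf b}:U\to V$ with $K(P_{\bf b})\subset U$. The Böttcher coordinate $\varphi_{\bf b}:\mathbb C\setminus K(P_{\bf b})\to\mathbb C\setminus\overline{\mathbb D}$ conjugates $P_{\bf b}$ to $z\mapsto z^{2^{\bf N}}$. Choose extensions $g:\mathcal U\to\mathcal V$ with $\mathbb T\Subset\mathcal U\Subset\mathcal V$ annular neighborhoods, and transport them by $\varphi_{\bf b}^{-1}$ to annular neighborhoods $U_g\Subset V_g$ of $\partial K(P_{\bf b})$. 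Interpolate between $P_{\bf b}$ on a slightly shrunken level set of the Green's function and $\varphi_{\bf b}^{-1}\circ g\circ\varphi_{\bf b}$ on a slightly larger one by a quasiconformal map of an interpolation annulus $A$. This yields a quasiregular map $F:U'\to V'$ of degree $2^{\bf N}$, holomorphic off $\bigcup_n F^{-n}(A)$. Pulling back the standard complex structure on $\mathbb C\setminus\bigcup_n F^{-n}(A)$ and the image of $A$'s structure under the surgery, one gets an $F$-invariant Beltrami coefficient $\mu$ with $\|\mu\|_\infty<1$. Let $h$ be the normalized quasiconformal solution of $\bar\partial h=\mu\,\partial h$ fixing $0$ and $-1$; set $\mathcal I_{\bf b}(g):=h\circ F\circ h^{-1}$.

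By construction $\mathcal I_{\bf b}(g)$ is polynomial-like of degree $2^{\bf N}$, hybrid equivalent to $P_{\bf b}$ (since $h$ is conformal on $K(P_{\bf b})$ pulled through the surgery), and thus lies in the connected component $\mathcal H({\bf b})$; that it is polynomial-like \emph{of type} ${\bf N}$ is exactly Theorem~\ref{decomposition}. Independence of the surgery choices (annular neighborhoods and interpolation) follows from the standard Sullivan–McMullen argument: different surgeries produce hybrid-equivalent germs with the same external map, and uniqueness of hybrid conjugacy modulo rotation combined with the normalization $g(1)=1$ rigidifies the Böttcher coordinate, so the resulting polynomial-like germ is the same up to equivalence in $\mathcal C_{\bf N}$. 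The inverse $\mathcal I_{\bf b}^{-1}$ is the external-map construction already sketched in the preceding paragraph of the paper: given $f\in\mathcal H({\bf b})$, produce the Böttcher coordinate $\phi_f$ near $\partial K(f)$, let $g=\phi_f\circ f\circ\phi_f^{-1}$, extend by Schwarz reflection, and normalize so that $g(1)=1$. The composition $\mathcal I_{\bf b}\circ\mathcal I_{\bf b}^{-1}$ and its reverse are the identity because both surgeries produce hybrid-equivalent germs with the same (normalized) external map, hence the same germ. The modulus identity $\mod\mathcal I_{\bf b}(g)=\mod g$ follows because a polynomial-like representative $f:U\to V$ with $\mod(V\setminus\overline U)$ close to $\mod f$ pushes via $\phi_f$ to annular neighborhoods of $\mathbb T$ realizing the same modulus for $g$, and conversely the surgery, performed on annular neighborhoods realizing $\mod g$ up to arbitrarily small error, produces polynomial-like representatives of $\mathcal I_{\bf b}(g)$ of that same modulus; both suprema therefore coincide.

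The main obstacle is continuity, and it must be argued in both directions. For continuity of $\mathcal I_{\bf b}^{-1}$, if $f_n\to f$ in $\mathcal H({\bf b})$ then $f_n\to f$ uniformly on a quasidisk containing $K(f)$, the filled Julia sets $K(f_n)$ converge in the Hausdorff sense (using $\mod f_n\ge\epsilon$ along any precompact sequence), so Böttcher coordinates converge uniformly on compact subsets of a fixed outer annular neighborhood of $\mathbb T$, giving $\mathcal I_{\bf b}^{-1}(f_n)\to\mathcal I_{\bf b}^{-1}(f)$ in the inductive limit topology of $\mathcal E_{\bf N}$. For continuity of $\mathcal I_{\bf b}$, suppose $g_n\to g$ in $\mathcal E_{\bf N}$, so that all $g_n$ admit holomorphic extensions to a common annular neighborhood $W$ of $\mathbb T$ and converge uniformly there. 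The same surgery data (annuli, interpolation cut-off) can be used for all $n$, producing quasiregular maps $F_n$ with uniformly bounded dilatation and Beltrami coefficients $\mu_n\to\mu$ in $L^1_{\mathrm{loc}}$. By the continuous dependence of the measurable Riemann mapping theorem on $\mu$, the normalized solutions $h_n\to h$ uniformly on compacts, hence $\mathcal I_{\bf b}(g_n)=h_n\circ F_n\circ h_n^{-1}\to h\circ F\circ h^{-1}=\mathcal I_{\bf b}(g)$ uniformly on a quasidisk containing $K(\mathcal I_{\bf b}(g))$, which is exactly convergence in $\mathcal C_{\bf N}$. The technical point requiring care is ensuring that the quasidisk neighborhood of $K(\mathcal I_{\bf b}(g_n))$ on which one gets uniform convergence actually contains $K(\mathcal I_{\bf b}(g))$ for large $n$; this is handled by a standard upper-semicontinuity argument for filled Julia sets combined with the uniform modulus lower bound coming from the fixed surgery annuli.
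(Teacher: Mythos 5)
Your construction of $\mathcal I_{\bf b}(g)$ by direct Douady--Hubbard surgery is a natural alternative to the paper's approach, and the existence of a polynomial-like germ of type ${\bf N}$ hybrid equivalent to $P_{\bf b}$ with external map $g$, together with the modulus identity and the continuity estimates, are all plausible along the lines you sketch. However, the heart of the theorem is the \emph{well-definedness} and \emph{bijectivity} of $\mathcal I_{\bf b}$, and your argument for both has a genuine gap.

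You claim that independence of the surgery choices and injectivity of $\mathcal I_{\bf b}^{-1}$ follow because ``hybrid-equivalent germs with the same (normalized) external map are the same germ,'' with the normalization $g(1)=1$ ``rigidifying the B\"ottcher coordinate.'' This is exactly what fails. As the paper points out immediately before the theorem statement, the normalized external map of a polynomial-like germ is \emph{not} unique: a degree-$2^{\bf N}$ expanding circle map has $2^{\bf N}-1$ fixed points on $\T$, and the B\"ottcher coordinate can be rotated to send any of them to $1$. Concretely, if $f_1,f_2\in\mathcal H({\bf b})$ are hybrid equivalent via $\eta$ with B\"ottcher coordinates $\psi_1,\psi_2$ both conjugating to a map fixing $1$, the induced boundary automorphism $\sigma=\psi_2^{-1}\circ\eta\circ\psi_1$ commutes with $g$ on $\T$ but need \emph{not} fix $1$; it sends $1$ to some fixed point of $g$, and unless that fixed point is $1$ you cannot conclude $\sigma=\mathrm{id}$. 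Without pinning down which fixed point is selected, neither ``$\mathcal I_{\bf b}^{-1}$ is the external-map construction'' nor ``the composites are the identity'' is a meaningful statement, since the external map of $f$ is a finite ambiguity, not a single element of $\E_{\bf N}$. The same issue undermines your claim that the germ does not depend on the interpolation annulus: two surgeries do yield germs conjugate near infinity, but you have no a priori control that the implicit marking of the fixed point on $\partial K$ matches.

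The paper's proof is organized precisely to avoid this. Instead of building $\mathcal I_{\bf b}(g)$ in one step, it chooses a path $\{g_t\}$ in $\E_{\bf N}$ from the canonical base point $g_0=z^{2^{\bf N}}$ to $g$, transports it by a \emph{continuous} family of q.c.\ conjugacies to a path $\{f_t\}$ in $\mathcal H({\bf b})$ starting at $P_{\bf b}$, and defines $\mathcal I_{\bf b}(g)=f_1$. Continuity rigidifies the choice of marked fixed point ($\lambda_0=\mathrm{id}$ forces $\lambda_t(1)=1$ for all $t$, since the fixed points vary discretely), the simple connectedness of $\E_{\bf N}$ is invoked to show independence of the chosen path, and bijectivity is proved by a path-lifting argument ($\{f_t\}$ has a unique lift in $\E_{\bf N}$). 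If you want to keep the surgery construction, you would still need to emulate this: either (i) perform the surgery simultaneously along a path from $z^{2^{\bf N}}$ to $g$ and use the continuity argument to show well-definedness, or (ii) invoke a covering-space/monodromy argument based on the simple connectedness of $\E_{\bf N}$ to resolve the $(2^{\bf N}-1)$-fold ambiguity in the external map. As written, your proof asserts uniqueness where only finite-to-one determinacy is available.
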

\begin{proof}Firstly, given $g \in \E_{\bf N}$, we choose a continuous path $\{g_t\}$ connecting $g_0=z^{2^{\bf N}}$ and $g_1=g$. Then we can construct a continuous family of $K$-q.c maps $h_t:\C\setminus{\D} \to \C\setminus{\D}$
with Beltrami differential $\nu_t$ continuously depending on $t$ such that $h_0=\mathrm{id}$ and $h_t\circ g_0=g_t\circ h_t$ near the unit circle $\T$ in the following way:\par
Since $\{g_t\}$ is compact, there exist representatives $g_t:W^1_t\setminus{\D}\to W^2_t\setminus{\D}$ where $W^1_t$, $W^2_t$ are quasidisks with $\mod W^2_t\setminus W^1_t\ge \epsilon_0$ for some $\epsilon_0>0$. By Gr\"otzsch's extremal problem, we know $W^1_t$ contains a Euclid disk $\D(0,r_1)$ with $r_1>1$. As $g_t$ is uniformly expanding near $\T$, we can choose $1<r_0<r_1$ such that $g^{-1}_t(\D(0,r_0)) \Subset \D(0,r_0)$ for all $t$. Let $\gamma^2_t\equiv \partial \D(0,r_0)$ and $\gamma^1_t= g^{-1}_t(\gamma^2_t)$, define $h_t\equiv\mathrm{id}$ outside $\D(0,r_0)$ and we can lift $h_t$ to $\partial \D(0,r^{1/d}_0)$ such that $g_t\circ h_t=g_0$. Then by extension, we can obtain $K$-q.c maps $h_t:A_0\to A_t$ where $A_t$ is the annulus bounded by $\gamma^1_t$ and $\gamma^2_t$. Moreover, $h_t$ depends continuously on $t$. Finally, we can lift $h_t$ to $\C\setminus \D$ by respecting the dynamics so that the Beltrami differentials $\nu_t$ of $h_t$ satisfying $\parallel \nu_t\parallel \le k=\frac{K-1}{K+1}$ for all $t$.\par
Now we are going to construct a continuous path $\{f_t\} \subset \mathcal H(\bf b)$ from the path $\{g_t\}$. Consider the B\"ottcher coordinate $\xi_{\bf b}: \C\setminus\overline{\D}\to \C\setminus K(P_{\bf b})$, we define Beltrami differentials $\mu_t$ on $\C$ such that $\mu_t=(\xi_{\bf b})_*\nu_t$ on $\C\setminus K(P_{\bf b})$ and $\mu_t=0$ on $K(P_{\bf b})$. By the Measurable Riemann Mapping Theorem, we can obtain a continuous path $\{q_t\}$ of $K$-q.c maps such that $\bar\partial q_t=\mu_t\partial q_t$ and $q_t$ fixes $0$, $1$. Then $Q_t:=q_t\circ P_{\bf b}\circ q^{-1}_t$ defines a polynomial-like map of degree $2^{\bf N}$  which is hybrid to $P_{\bf b}$ and $Q_0=P_{\bf b}$. By Theorem~\ref{decomposition}, $Q_t$ is affinely conjugate to a map in $\mathcal H(\bf b)$. Thus we can use a continuous family of affine transformations $\{A_t\}$ to normalize $Q_t$ so that $f_t:=A_t\circ Q_t\circ A^{-1}_t \in \mathcal H(\bf b)$. Let $\phi_t=A_t\circ q_t$, then $f_t=\phi_t\circ P_{\bf b}\circ \phi_t^{-1}$.\par
\begin{Lemma}For every $t\in [0,1]$, $g_t$ is an external map of the polynomial-like map $f_t$.
\end{Lemma}
\begin{proof}Let $\psi_t:\C\setminus \overline{\D}\to \C\setminus K(f_t)$ be the continuous family of Riemann maps normalized so that $\psi_0=\xi_{\bf b}$ and $\tilde g_t=\psi_t^{-1}\circ f_t\circ \psi_t:\T\to \T$ fixes $1$, hence $\tilde g_t$ is an external map of $f_t$.\par
To see the existence of $\psi_t$, we consider another normalized family of B\"ottcher coordinates $\hat\psi_t:\C\setminus \overline{\D}\to \C\setminus K(f_t)$ such that $\hat\psi_t(\infty)=\infty,\hat\psi_t'(\infty)>0$. It is a continuous family due to the semi-upper continuity of $K(f_t)$ and the semi-lower continuity of $J(f_t)$( see \cite[Lemma~4.15]{lyubich1999feigenbaum} for an example). Then $G_t|_{\gamma}=\hat\psi_t^{-1}\circ f_t\circ\hat\psi_t$ is continuous in $t$ where $\gamma$ is a Jordan curve in $\C\setminus\overline{\D}$. By the Schwarz reflection and Maximum Principle, $G_t:\T\to\T$ is a continuous family. Let $z(t)$ be a fixed point of $G_t$ so that $z(t)$ is continuous and $z(0)=1$. Then $\psi_t:=\hat\psi_t\circ e^{i\mathrm{Arg}z(t)}:\C\setminus \overline{\D}\to \C\setminus K(f_t)$ is continuous in $t$ and and $\tilde g_t=\psi_t^{-1}\circ f_t\circ \psi_t:\T\to \T$ fixes $1$.\par
It remains to show that $g_t=\tilde g_t$ for all $t\in [0,1]$. Clearly $\sigma_t:=\psi^{-1}_t\circ\phi_t\circ \xi_{\bf b}:\C\setminus \overline{\D}\to \C\setminus \overline{\D}$ is a quasiconformal map conjugating $g_0$ to $\tilde g_t$ whose Beltrami differential coincides with that of $h_t$.
It follows that $\lambda_t:=\sigma_t\circ h^{-1}_t:\C\setminus \overline{\D} \to \C\setminus \overline{\D}$ is a rotation conjugating $g_t$ to $\tilde g_t$. Since $1$ is the fixed point of $g_t$, $\lambda_t(1)$ must be one of the fixed point of $\tilde g_t$ by the conjugacy.  Let $t_0=\sup\{t\mid \lambda_x(1)=1,~~x\le t\}$, we claim $t_0=1$. Indeed, $\lambda_0=\mathrm{id}$ implies $\lambda_0(1)=1$, thus $0\le t_0\le 1$ exists. By continuity, we obtain $\lambda_{t_0}(1)=1$. Hence $t_0=1$, for otherwise, there exists $\tilde t$ slightly large than $t_0$ such that $\lambda_x(1)=1$ for all $x\le \tilde t$, which contradicts with the definition of $t_0$. We conclude that $\lambda_t=\mathrm{id}$ and hence $g_t=\tilde g_t$ for all $t\in[0,1]$.

\end{proof}
Let $\mathcal I_{\bf b}(g):=f_1$ and we should check that the polynomial-like map $f_1$ we constructed above does not depend on the choice of the path $\{g_t\}$.\par
 To this end, we first show that once the connecting path $g_t$ is chosen, the path $f_t$ does not depend on the choice of $h_t$. Suppose $\tilde h_t$ is another $K$-q.c conjugation from $g_0=z^{2^{\bf N}}$ to $g_t$, let $\tilde f_t$ and $\tilde \phi_t$ be the corresponding polynomial-like map and hybrid conjugation. Then $\eta_t=\tilde \phi_t\circ \phi_t^{-1}$ is a hybrid conjugation from $f_t$ to $\tilde f_t$ near $K(f_t)$. Let $\psi_t:\C\setminus \overline{\D} \to \C\setminus K(f_t)$ and $\tilde \psi_t:\C\setminus \overline{\D} \to \C\setminus K(\tilde f_t)$ be the Riemann mappings in the above construction of the externmal maps respectively.\par
 Let $\tilde \eta_t=\tilde\psi_t^{-1}\circ \eta_t\circ \psi_t$, then $\tilde \eta_t(1)$ is a fixed point of $g_t$ and $g_t\circ \tilde \eta_t=\tilde \eta_t\circ g_t$ on $\T$. Indeed, $\tilde \eta_t$ is a $K$-q.c map defined on $U\setminus \overline{\D}$, then it can be extended to $U\setminus \D$. It is continuous in $t$ and $\tilde \eta_0(1)=1$, and hence $\tilde \eta_t(1)=1$.\par
 
\begin{Lemma}Let $g:\T\to \T$ be an expanding circle map , $h$ is an automorphism of $\T$ such that $h\circ g=g\circ h$ and $h(1)=1$, then $h=\mathrm{id}$.
\end{Lemma} 
\begin{proof}It is trivial when $g=m_d(x)=dx \mod 1$, where $d=2^{\bf N}$. It is well-known any expanding circle map is quasisymmetricly conjugate to $dx \mod 1$ (see \cite[the poof of Lemma~3.8]{lyubich1999feigenbaum}), so the conclusion follows easily.
\end{proof}

\begin{Lemma}[{\cite[Lemma~2.1]{ALM}}]Let $S$ be a hyperbolic Riemann surface with boundary $\gamma$ and $H:S\to S$ be a $K$-q.c map  homotopic to the identity rel the boundary, then $\mathrm{d}_S(x,H(x))\le C(K)$ where $\mathrm{d}_S$ is the hyperbolic distance and $C(K)$ is a constant only depend on $K$.
\end{Lemma}
 
 By the above two Lemmas, we conclude that $\tilde \eta_t=\mathrm{id}$ on $\T$ and 
\begin{eqnarray*}
\mathrm{d}_{\tilde U\setminus K(\tilde f_t)}(\eta_t(x),\tilde\psi_t\circ\psi^{-1}_t(x))
&=& \mathrm{d}_{\tilde\psi_t^{-1}(\tilde U\setminus K(\tilde f_t))}(\tilde\psi_t^{-1}\circ\eta_t(x),\psi^{-1}_t(x))\\
&=& \mathrm{d}_{U\setminus K(f_t)}(\tilde\psi_t^{-1}\circ\eta_t\circ\psi_t(z),z)\\
&=& \mathrm{d}_{U\setminus K(f_t)}(\tilde \eta_t(z),z)\\
&\le& C(K)
\end{eqnarray*}
where $x=\psi_t(z)$ and $\eta_t(x) \to K(\tilde f_t)$, $\tilde \psi_t\circ\psi^{-1}_t(x) \to K(\tilde f_t)$ as $z \to K(f_t)$. Hence $\mathrm{d}_{Euclid}(\eta_t(x),\tilde\psi_t\circ\psi^{-1}_t(x))\to 0$ as $z \to K(f_t)$. By \cite[Lemma~2]{DH85}, we obtain a quasiconformal map:
\[Q_t(x)=
\begin{cases}
\tilde \psi_t\circ\psi^{-1}_t(x), & x\in \C\setminus K(\tilde f_t)\\
\eta_t(x), & x\in K(\tilde f_t)
\end{cases}\] 
It follows from Weyl's lemma that $Q_t$ is conformal, and thus $Q_t$ is affine. But $\tilde f_0=f_0=P_{\bf b}$ and $Q_0=\mathrm{id}$, by the continuity, we conclude that $\tilde f_t=f_t$ for all $t\in [0,1]$.\par
Let us now show that the endpoint $f=f_1$ does not change whether the path $\{g_t\}$ is alternated. Given two paths $\{g_t\}$ and $\{\tilde g_t\}$ connecting $z^d$ and $g$, by the simply connectedness of $\E_{\bf N}$, we can choose a homotopy $g_t^s$ with $g_t^0=g_t$ and $g_t^1=\tilde g_t$. For every $s\in [0,1]$, let $f^s_1$ be the polynomial-like map corresponding to the path $\{g^s_t\}$, then $f^s_1$ are hybrid equivalent and $g$ is the external map of $f^s_1$. By a similar argument as above, we can show that there exists a continuous family of affine transformations $\Lambda_s$ such that $\Lambda_0=\mathrm{id}$ and $\Lambda_s \circ f_1^0\circ\Lambda^{-1}_s=f_1^s$. Hence, by the continuity, $f_1^s=f_1^0$ for all $s\in[0,1]$.\par
The construction of $\mathcal I_{\bf b}(g)$ implies the continuity of $\mathcal I_{\bf b}$.\par
Let us now prove that $\mathcal I_{\bf b}$ is a bijection. For every $f\in \mathcal H(\bf b)$, choose a path $\{f_t\}\subset \mathcal H(\bf b)$ to connect $P_{\bf b}$ and $f$. We will prove that $\{f_t\}$ has a unique lift in $\E_{\bf N}$, and this implies that $\mathcal I_{\bf b}$ is a bijection. Consider a continuous family of conformal mappings $\varphi_t: \C\setminus \overline{\D} \to \C\setminus K(f_t)$ such that $\varphi_t(\infty)=\infty$ and $\varphi'_t(\infty)>0$. Then $\tilde g_t:=\varphi^{-1}_t\circ f_t\circ \varphi_t: \T\to \T$ is an analytic expanding map of degree $2^{\bf N}$ for all $t\in[0,1]$ and $\tilde g_0=z^{2^{\bf N}}$. We can choose a continuous family of conformal maps $A_t:\C\setminus \D \to \C\setminus \D$ so that $g_t:=A_t^{-1}\circ \tilde g_t\circ A_t:\T\to \T$ belongs to $\E_{\bf N}$ and $A_0=\mathrm{id}$. It is easy to check that $\mathcal I_{\bf b}(g_t)=f_t$ by the definition, and so $\{g_t\}$ is a lift of $\{f_t\}$. If $\{f_t\}$ has another lift $\{\hat g_t\}$, then $\hat g_0=z^{2^{\bf N}}$ since $P_{\bf b}$ has a unique preimage $z\mapsto z^{2^{\bf N}}$. Let $\psi_t:\C\setminus \overline{\D} \to \C\setminus K(f_t)$ and $\hat\psi_t:\C\setminus \overline{\D} \to \C\setminus K(f_t)$ be the conformal maps so that $f_t\circ \psi_t=\psi_t\circ  g_t$, $\psi_0=\xi_{\bf b}$ and $f_t\circ \hat\psi_t=\hat\psi_t\circ \hat g_t$, $\hat\psi_0=\xi_{\bf b}$ respectively. Set $\eta_t:=\psi^{-1}_t\circ \hat\psi_t$, we get $\eta_t\circ\hat g_t=g_t\circ \eta_t$ and $\eta_0=\mathrm{id}$. Thus $\eta_t(1)$ is one of the fixed point of $g_t$. But $\eta_0(1)=1$, so by continuity, $\eta_t(1)=1$ for all $t\in[0,1]$. It follows from the Theorem of Boundary Correspondence that $\eta_t$ can only be the identity. Hence $\psi_t=\hat\psi_t$, and this implies $g_t=\hat g_t$ for all $t\in [0,1]$. In other words, $\{g_t\}$ is the unique lift of $\{f_t\}$.\par
For every $\epsilon>0$, consider the restriction $\mathcal I_{\bf b}|_{\E_{\bf N}(\epsilon)}:\E_{\bf N}(\epsilon) \to \mathcal H(\bf b,\epsilon)$ of $\mathcal I_{\bf b}$ to $\E_{\bf N}(\epsilon)$. It is a continuous bijection, so by the compactness of $\E_{\bf N}(\epsilon)$, it is a homeomorphism. If $f_n \to f$ in $\mathcal H(\bf b)$, then $\{f_n\}_{n=1}^{\infty}\cup \{f\}$ is a compact subset of $\mathcal H(\bf b)$, thus it is contained in some $\mathcal H(\bf b,\epsilon_0)$. Hence $g_n:=\mathcal I_{\bf b}^{-1}(f_n) \in \E_{\bf N}(\epsilon_0)$ for all $n\in \mathbb N$. Since $\E_{\bf N}(\epsilon_0)$ is compact, every subsequence of $g_n$ has a limit point, and by the continuity and the bijectivity of $\mathcal I_{\bf b}$, the limit point must be $\mathcal I^{-1}_{\bf b}(f)$. This implies $\mathcal I_{\bf b}^{-1}(f_n)$ converges to $\mathcal I^{-1}_{\bf b}(f)$, thus $\mathcal I^{-1}_{\bf b}$ is continuous.
\end{proof}
By Theorem~\ref{externalstructure}, for every $f\in  \mathcal C_{\bf N}$, there exists a unique $\bf b$ such that $f\in \mathcal H(\bf b)$ and we denote it by $\chi(f)$.

\subsection{Complex renormalization for multimodal maps of type $\bf N$}
Let us now define the complex renormalization for multimodal maps of type $\bf N$.

We say a multimodal map $f$ of type $\bf N$ has {\em a polynomial-like extension} if there exists quasidisks $U_0,U_1,\cdots,U_{\bf N}$ such that $f:U_0\to U_{\bf N}$ is a polynomial-like map, $\I\subset U_j$ and $f_j:U_j\to U_{j+1}$ is holomorphic proper for all $0\le j\le {\bf N}-1$ where $(f_j:\I\to \I)_{j=0}^{{\bf N}-1}$ is a unimodal decomposition of $f$. \par

\begin{definition}
A multimodal map $f$ of type $\bf N$ is called complex renormalizable if it is real renormalizable and both itself and its real renormalization $\mathcal R f$ have polynomial-like extensions. The germ of the polynomial-like extension of  $\mathcal R f$ will be called the complex renormalization of $f$.
\end{definition}
In \cite{C2Density}, Shen proved the complex bounds for all the infinitely renormalizable real analytic box map without critical points of odd order:  
\begin{Theorem}[{\cite[Theorem~3']{C2Density}}]\label{shen1}There exists $\epsilon_0>0$ with the following property. If $\mathcal F$  is a compact  family of  infinitely renormalizable multimodal maps of type $\bf N$, then there exists $K>0$ such that for any $k>K$ and $f\in \mathcal F$,  $\mathcal R^kf$ has an polynomial-like extension $\mathcal R^k f:U\to V$ with
\[\mod V\setminus{U}\ge \epsilon_0.\]
\end{Theorem}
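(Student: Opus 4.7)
The plan is to produce, uniformly for $f\in\mathcal F$ and $k>K$, a polynomial-like extension $\mathcal R^k f: U\to V$ of degree $2^{\bf N}$ with $\mathrm{mod}(V\setminus U)\ge\epsilon_0$, by combining the real bounds and the Epstein-class machinery developed in Section~2. First, I would normalize: write $\mathcal R^k f=A\circ F^{{\bf N}p_k}|_{J_k}\circ A^{-1}$, where $F$ is the extended map of $\mathcal R^{k-1}f$, $J_k\ni{\bf c_0}$ is the maximal $p_k$-periodic interval, and $A:J_k\to\I$ is the affine normalization. It suffices to produce a polynomial-like extension of $F^{{\bf N}p_k}|_{J_k}$ whose modulus is bounded below by a constant depending only on $\mathcal F$, and then transport it back by $A$.

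The second step, which is also the decisive real-geometric input, is to establish that the limit scaling factor $\Lambda_{I_0(F_k)}$ is bounded above uniformly in $f\in\mathcal F$ and in $k$ large. I would argue by contradiction using the precompactness of $\mathcal F$: if $\Lambda_{I_0(F_{k_n})}\to\infty$ along some subsequence, choosing $\theta>\Lambda_{I_0(F_{k_n})}$ arbitrarily large and applying Lemma~\ref{4.1} iteratively produces an arbitrarily small ratio $|I_\infty|/|I_0|$ for $\mathcal R^{k_n-1}f_{k_n}$, which, on passing to a limit inside the compact family $\mathcal F$ of infinitely renormalizable maps, forces the limit to carry an attracting cycle or to collapse the restrictive interval to a point, both of which contradict infinite renormalizability. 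With $\Lambda_{I_0}<\rho$ uniformly, Theorem~\ref{realbounds} and Lemma~\ref{nice} give $C$-nice admissible intervals at appropriate non-central levels of the principal nest, and Proposition~\ref{KSS} together with Fact~3.1 yield $C$-nice KSS levels $E_\chi\subset I_{m(\kappa)-1}$, all with constants independent of $f$ and $k$.

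The third step constructs the complex extension. I would select a nice admissible interval $T\supset J_k$ of definite relative size—depending on the regime, either $I_{m(\kappa)-1}$ (when its relative size to $J_k$ is bounded) or the appropriate KSS level $E_\chi$—and decompose the chain of pullbacks from $J_k$ to $T$ into an alternating sequence of long transitions through critical points and short diffeomorphic branches. Lemma~\ref{lem:conformal} gives conformal extensions of the diffeomorphic branches to the slit plane $\C_{F^j(J_k)}$, while Lemma~\ref{lem:epstein} and the successor chain of compactness lemmas (Lemma~\ref{FR}, Corollary~\ref{coro:FR}, Lemma~\ref{saddle-node}, Lemma~\ref{saddlebound}, Lemma~\ref{long}, Lemma~\ref{compactness}) place every normalized transition piece into a fixed compact Epstein class $\mathcal{SE}(C',1,M{\bf N})$. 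Concatenating these extensions yields a holomorphic map $\widetilde F:\Omega\to\C_{\widehat T}$ of degree $2^{\bf N}$, defined on a complex neighborhood $\Omega$ of $J_k$, where $\widehat T\supset\I$ is a real-symmetric interval of definite relative size after transporting by $A$.

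The fourth step is the polynomial-like truncation. Inside $\C_{\widehat T}$ I choose $V$ to be a standard real-symmetric Jordan disk (for example an ellipse about $\I$ of fixed eccentricity) staying at definite hyperbolic distance from the slit, and set $U$ to be the component of $\widetilde F^{-1}(V)$ containing $\I$. Compactness of $\mathcal{SE}(C',1,M{\bf N})$ in $C^1$ ensures that $U\Subset V$, that the restriction $\widetilde F:U\to V$ is proper of degree $2^{\bf N}$, and that $\mathrm{mod}(V\setminus U)\ge\epsilon_0$ for an $\epsilon_0$ depending only on $C'$, $M$ and the universal real-bound constants. The hard part throughout is the uniform upper bound on $\Lambda_{I_0(F_k)}$ across $\mathcal F$ in Step~2: all subsequent extensions and modulus estimates are mechanical applications of the Epstein-class machinery once $\Lambda_{I_0}$ is controlled, but controlling $\Lambda_{I_0}$ itself requires the full force of the compactness of $\mathcal F$ coupled with the inductive transfer provided by Lemma~\ref{4.1} and the ruling-out of degenerate central cascades via Lemma~\ref{sn}.
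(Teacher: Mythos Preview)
The paper does not prove this theorem; it is quoted from Shen \cite{C2Density} and used as a black box. Your attempt to derive it from the Section~2 machinery runs into a genuine circularity and a gap in the compactness step.

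\textbf{Circularity.} The compactness lemmas you invoke (Lemma~\ref{lem:conformal}, Lemma~\ref{lem:epstein}, and their consequences Lemmas~\ref{FR}--\ref{compactness}) are stated and proved under the standing hypothesis, declared at the start of \S2.2, that $F$ is \emph{bi-infinitely} renormalizable. Their proofs explicitly use infinite anti-renormalizability to produce the sequence $\{H_j\}$ of ambient maps with shrinking fibers, and then cite Shen's own Propositions~5.7--5.8 from \cite{C2Density} to pass to the conformal limit. In the setting of Theorem~\ref{shen1}, $\mathcal R^{k-1}f$ is only $(k-1)$-times anti-renormalizable, so you cannot invoke these lemmas as stated; and the underlying input they rely on is precisely Shen's complex-bounds work that you are trying to reprove.

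\textbf{The gap in Step~2.} Your contradiction argument for $\sup_{f,k}\Lambda_{I_0(F_k)}<\infty$ does not close. You pass from large $\Lambda_{I_0(F_{k_n})}$ to a small ratio $|I_\infty|/|I_0|$ for $\mathcal R^{k_n-1}f_{k_n}$, and then appeal to compactness of $\mathcal F$. But $\mathcal R^{k_n-1}f_{k_n}\notin\mathcal F$; only $f_{k_n}\in\mathcal F$. Compactness of $\mathcal F$ gives you nothing about limits of the deep renormalizations $\mathcal R^{k_n-1}f_{k_n}$ without a priori bounds on them, which is exactly what Theorem~\ref{shen1} asserts. Moreover, the hypotheses of Lemmas~\ref{saddlebound}--\ref{compactness} all take as input a bound of the form $|T|<\Lambda|\mathcal L_{\bf c}(I_\infty)|$ or $|I_0|<\Lambda|I_\infty|$; these are consequences of the complex bounds, not tools to establish them. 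In short, Shen's Theorem~3$'$ is the hard analytic input here, and the Section~2 apparatus is built on top of it (and on bi-infinite renormalizability), not the other way around.
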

Hence, all the maps in $\mathcal I$ are actually infinitely complex renormalizable (see also \cite[Theorem~3]{C2Density}), so from now on we don't distinguish the terminology of real renormalization and complex renormalization for multimodal maps. We mention here that Clark, Trejo and vanStrien   \cite{clark2017complex} proved the complex bounds for all the infinitely renormalizable real analytic box map recently.\par

 For polynomial-like germs of type $\bf N$, one can still easily define the pre-renormalization just as a  first return map. However,  there is not a canonical way to normalize such a first return map to the normalized form. A usual way to do this, is to use the external marking. Nevertheless, we can define the renormalization for polynomial like germ $f$ which is in a hybrid leaf of a complex renormalizable multimodal map $f_*$ of   type $\bf N$ as following:  Choose a path $\{f_t\}_{t\in [0,1]}$ in this hybrid leaf to connect $f_*$ and $f$, let $h_t$ be the hybrid conjugacy between $f_*=f_0$ and $f_t$, then $f_t=h^{-1}_t\circ f_*\circ h_t$.  Suppose the renormalization period of $f_*$ is $p$, then $f_t^p=h^{-1}_t\circ f^p_*\circ h_t$ restricting to some small region is a pre-renormalization for $f_t$. Finally,  there exists a continuous family $\{\Lambda_t\}$ of affine maps such that $\Lambda_t \circ f^p_t\circ \Lambda^{-1}_t \in \mathcal H(\mathcal R f_*)$ for all $t\in [0,1]$ such that $\Lambda_0 \circ f^p_*\circ \Lambda^{-1}_0=\mathcal Rf_*$ and we define $\mathcal Rf:=\Lambda_1\circ f^p\circ \Lambda^{-1}_1$. As $\mathcal H(f_*)$ is homeomorphic to $\mathcal E_{\bf N}$, it is simply connected, so the definition of $\mathcal Rf$ dose not depend on the choice of the path $\{f_t\}$.
%

\section{Path holomorphic structure on hybrid leaves}
In this section, we will use the method of path holomorphic space developed in \cite{Fullhorseshoeunimodal} by Avila and Lyubich. 
Following Avila-Lyubich \cite{Fullhorseshoeunimodal}, we define the path holomorphic structure on all the real-symmetric hybrid leaves.  Under the path holomorphic structure, the renormalization operator between two hybrid leaves is contracting with respect to the corresponding Carathe\'odary metric. Use Avila and Lyubich's idea of cocycles, one can  transfer the beau bounds (uniform a priori  complex bounds) for real maps to the beau bounds for entire hybrid leaves of real maps. Altogether the contracting property for the renormalization operator and the beau bounds, we show the exponential contraction of the renormalization operator along the real-symmetric leaves. Some proofs in this section are similar to the unimodal case, so we will skip these proofs. For details,  we refer the readers to   section $3-8$ in \cite{Fullhorseshoeunimodal}.
\begin{definition}Let $\mathbb X$ be a topological space, a path holomorphic structure $\bf{\mathrm Hol}(\mathbb X)$ on $\mathbb X$ is a family of continuous paths $\Gamma=\{\gamma\mid \gamma:\D\to \mathbb X~\text{continuous}\}$ such that
\begin{enumerate}
\item $\Gamma$ contains all constant maps;
\item for any $\gamma \in \Gamma$ and holomorphic map $\phi: \D \to \D$, the composition $\gamma\circ \phi$ belongs to $\Gamma$.
\end{enumerate}
A topological space $\mathbb X$ equipped with a path holomorphic structure is called a path holomorphic space.   Every element in $\bf{\mathrm Hol}(\mathbb X)$ will be called a holomorphic path.
\end{definition}
For two path holomorphic space $\mathbb X$ and $\mathbb Y$, we say $\Phi:\mathbb X\to \mathbb Y$ is {\em a path holomorphic map} if $\Phi$ maps each holomorphic path in $\mathbb X$ to a holomorphic path in $\mathbb Y$. We denote by $\bf{\mathrm Hol}(\mathbb X,\mathbb Y)$ the set consisting of all the path holomorphic map from $\mathbb X$ to $\mathbb Y$.

\begin{definition}[Holomorphic path in $\mathcal H(\bf 0)$]
Let $\{f_{\lambda}\}_{\lambda\in \D}$ be a continuous path in $\mathcal H(\bf 0)$, we say $\{f_{\lambda}\}_{\lambda\in \D}$ is a holomorphic path if  there exists  a holomorphic motion $h_{\lambda}(z):\D\times \C \to \C$ such that
\begin{enumerate}
\item $h_{0}=\mathrm{id}$; 
\item $f_{\lambda}\circ h_{\lambda}=h_{0}\circ f_{0}$ on $K(f_{0})$;
\item $\bar\partial h_{\lambda}=0$ a.e on $K(f_{0})$.
\end{enumerate}
\end{definition}

\begin{definition}[Locally holomorphic path]
Let $\{f_{\lambda}\}_{\lambda\in \D}$ be a continuous path in $\mathcal H(\bf 0)$, we say $\{f_{\lambda}\}_{\lambda\in \D}$ is a locally holomorphic path if  for any $\lambda_0\in \D$, there exists a disk $\D(\lambda_0,r)$ and a holomorphic motion $h_{\lambda}(z):\D(\lambda_0,r)\times \C \to \C$ such that
\begin{enumerate}
\item $h_{\lambda_0}=\mathrm{id}$; 
\item $f_{\lambda}\circ h_{\lambda}=h_{\lambda_0}\circ f_{\lambda_0}$ on $K(f_{\lambda_0})$;
\item $\bar\partial h_{\lambda}=0$ a.e on $K(f_{\lambda_0})$.
\end{enumerate}
\end{definition}
Note that $\{f_{\lambda}\}_{\lambda\in \D}$ is a holomorphic path if and only if it is a locally holomorphic path.
 Let $\bf{\mathrm Hol}(\mathcal H(\bf 0))$ be the set of all the holomorphic paths in $\mathcal H(\bf 0)$, then $\bf{\mathrm Hol}(\mathcal H(\bf 0))$ is a path holomorphic structure on $\mathcal H(\bf 0)$.\par
 The following lemma explains the relation between path holomorphic structure and analytic structure in $\mathcal H(\bf 0)$.

\begin{Lemma} \label{holopath}A map $\phi:\D \to \mathcal H(\bf 0)$ is a holomorphic path  in $\mathcal H(\bf 0)$ if and only if it is analytic.
\end{Lemma}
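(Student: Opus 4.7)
The plan is to establish both implications by adapting the Avila--Lyubich strategy from the unimodal setting \cite{Fullhorseshoeunimodal}, using the Measurable Riemann Mapping Theorem with holomorphic parameter dependence together with Slodkowski's extension theorem for holomorphic motions. For the direction \emph{analytic $\Rightarrow$ holomorphic path}, I would fix $\lambda_0\in\D$ and, by analyticity, pass to a small subdisk on which $\phi$ lands in a Banach slice $\mathcal B^0_U(\phi(\lambda_0),\epsilon)$ with analytic dependence. Then $f_\lambda:=\phi(\lambda)$ is an analytic family of polynomial-like maps $f_\lambda:U\to V$, all hybrid equivalent to $f_{\lambda_0}$. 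To build the required hybrid motion $h_\lambda$, I would construct an analytically varying Beltrami differential $\mu_\lambda$ on the fundamental annulus $V\setminus\overline U$ by comparing the complex structures that $f_\lambda$ and $f_{\lambda_0}$ induce on their annuli, pull $\mu_\lambda$ back equivariantly under iterates of $f_{\lambda_0}$ to $\C\setminus K(f_{\lambda_0})$, and set $\mu_\lambda\equiv 0$ on $K(f_{\lambda_0})$. Solving $\bar\partial h_\lambda=\mu_\lambda\,\partial h_\lambda$ with normalization $h_{\lambda_0}=\mathrm{id}$ produces, by the holomorphic dependence of MRMT solutions on parameters, a holomorphic motion satisfying the three defining properties of a hybrid motion.

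For the converse, given a holomorphic path with hybrid motion $h_\lambda$, I would first extend $h_\lambda$ to a holomorphic motion of $\C$ via Slodkowski. Shrinking $\D$ so that all $f_\lambda$ admit polynomial-like representatives on a common quasidisk $U'\Subset U$, I would verify that $\lambda\mapsto f_\lambda|_{U'}\in \mathcal B^0_{U'}$ is analytic. The key identity $f_\lambda\circ h_\lambda=f_0$ on $K(f_0)$ forces $f_\lambda$ to agree with the quasiregular map $F_\lambda:=f_0\circ h_\lambda^{-1}$ on $K(f_\lambda)=h_\lambda(K(f_0))$. Since $\bar\partial h_\lambda\equiv 0$ a.e.\ on $K(f_0)$ and $h_\lambda(z)$ depends holomorphically on $\lambda$, the values $f_\lambda(w)$ for $w$ in a fixed neighborhood of $K(f_\lambda)$ inherit holomorphic dependence on $\lambda$; combined with the Banach-space compactness of bounded holomorphic functions on $U'$ and Cauchy estimates, this yields analyticity of $\lambda\mapsto f_\lambda|_{U'}$ in $\mathcal B^0_{U'}$.

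The main obstacle is the forward direction: the Beltrami differential $\mu_\lambda$ must simultaneously depend holomorphically on $\lambda$, be invariant under the dynamics of $f_{\lambda_0}$, and produce a genuine hybrid (rather than merely topological) conjugacy with the correct external marking supplied by Theorem~\ref{externalstructure}. Careful bookkeeping is needed so that $h_{\lambda_0}=\mathrm{id}$ and $h_\lambda$ extends across $K(f_{\lambda_0})$ compatibly with the standard complex structure there. The converse direction is more routine once the extended holomorphic motion is in place and one observes that the hypothesis $\bar\partial h_\lambda=0$ on $K(f_0)$ is precisely what guarantees that hybrid motions preserve the complex structure on filled Julia sets, translating the pointwise analyticity in $\lambda$ into analyticity in the Banach slice norm.
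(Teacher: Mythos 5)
Your proposal follows the paper's proof in both directions: construct dynamics-respecting quasiconformal maps on the fundamental annulus, take their Beltrami differentials, extend by zero and invoke the Measurable Riemann Mapping Theorem for the forward implication; use the identity $f_\lambda=f_{\lambda_0}\circ h_\lambda^{-1}$ on $\mathrm{int}\,K(f_\lambda)$ together with a Hartogs-type extension (the paper cites Hartogs' theorem directly, you phrase it via Cauchy estimates, which amounts to the same thing) for the converse. One small remark: invoking Slodkowski's theorem is unnecessary, since the paper's definition of a (locally) holomorphic path already requires the motion $h_\lambda$ to be defined on all of $\C$.
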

\begin{proof} If $\phi:\D\to \mathcal H(\bf 0)$ is  analytic, then for any $\lambda_0 \in \D$, there exist a sufficiently small round disk $\D(\lambda_0,r)$ and a Banach slice $\mathcal S_U=\mathcal B_U(\hat f,\epsilon)$ such that $\phi(\D(\lambda_0,r)) \subset \mathcal S_U$. As $r$ is sufficiently small, we can assume $\epsilon$ is also sufficiently small so that there exist quasidisks $U_f$ slightly smaller than $U$ and a quasidisk $V$ such that $f:U_f \to V$ is polynomial-like for every $f \in \phi(\D(\lambda_0,r))$. Then we can easily construct an analytic family of quasiconformal maps $\{h_{\lambda}:\overline V\setminus U_{\phi(\lambda_0)} \to \overline V\setminus U_{\phi(\lambda)}\}_{\lambda \in \D(\lambda_0,r)}$ which respects the dynamics on the boundaries. Let $\mu_\lambda$ be the Beltrami differential of $h_\lambda$ and pull $\mu_\lambda$ back by $\phi(\lambda_0)$, then $\mu_\lambda$ can be extended to $V\setminus K(\phi(\lambda_0))$. Finally, define $\mu_\lambda=0$ on $\C\setminus V \cup K(\phi(\lambda_0))$ and by the Measurable Riemann Mapping Theorem, we obtain an analytic family of quasiconformal maps $H_{\lambda}$ such that $H_\lambda \circ \phi(\lambda_0)=\phi(\lambda)\circ H_\lambda$ on $U_{\phi(\lambda_0)}$ and $\bar\partial  H_{\lambda}=0$ a.e. on $K(\phi(\lambda_0))$. It follows that $\phi(\D)$ is a holomorphic path in $\mathcal H(\bf 0)$.(Indeed, it is actually a Beltrami path.)\par
Vice versa, if $\phi:\D \to \mathcal H(\bf 0)$ is holomorphic path, set $f_\lambda=\phi(\lambda)$ for all $\lambda \in \D$, then for any $\lambda_0\in \D$ there exist a disk $\D(\lambda_0,r)\subset \D$ and a holomorphic motion $h_{\lambda}(z):\D(\lambda_0,r)\times \C \to \C$ such that $h_{\lambda_0}=\mathrm{id}$, $f_{\lambda}\circ h_{\lambda}=h_{\lambda_0}\circ f_{\lambda_0}$ on $K(f_{\lambda_0})$ and $\bar\partial h_{\lambda}=0$ a.e on $K(f_{\lambda_0})$. Choose $r$ sufficiently small, we can assume there exist quasidisks $U$ and $V$ such that $f_{\lambda_0}:U\to V$ is polynomial-like and $\bigcup\limits_{\lambda\in \D(\lambda_0,r)} K(f_\lambda) \Subset U$. Since $\phi$ is a continuous map, $\phi(\D(\lambda_0,r))$ is contained in some Banach slice $\mathcal S_U$ once $r$ is small. It remains to prove that for every $z \in U$, $f_\lambda(z)$ is holomorphic in $\lambda$. As $f_{\lambda_0}$ is hybrid conjugate to $z^d$,   $\mathrm{int}(K(f_{\lambda_0}))$ is a quasidisk. Since $f$ is a holomorphic function of $(\lambda,z) \in\D(\lambda_0,r)\times\mathrm{int}(K(f_{\lambda_0}))$, Hartog's Theorem implies that $f_{\lambda}(z)$ is in fact  a holomorphic function of $(\lambda,z)$ through $\D(\lambda_0,r)\times U$.

\end{proof}

\begin{definition}[Beltrami path]
For every ${\bf b} \in \mathcal C$,  a path $\{f_{\lambda}\}_{\lambda\in \D(\lambda_0,r)} \subset \mathcal H(\bf b)$ is called a Beltrami path if there exists
a holomorphic motion $h_{\lambda}: \C\to \C$ over $\D(\lambda_0,r)$, based on $\lambda_0$, that provides a hybrid conjugacy
between $f_{\lambda_0}$ and $f_{\lambda}$.  
\end{definition}
The proof of Lemma~\ref{holopath} has implied the following corollary:
\begin{Coro}A continuous path $\{f_\lambda\}_{\lambda\in \D} \subset \mathcal H(\bf 0)$ is a holomorphic path if and only if it is a Beltrami path.
\end{Coro}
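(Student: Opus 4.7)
The plan is to deduce the corollary from Lemma~\ref{holopath} (and its proof) by chaining three equivalences: \emph{Beltrami} $\Rightarrow$ \emph{holomorphic path} $\Rightarrow$ \emph{analytic} $\Rightarrow$ \emph{Beltrami}, where the middle equivalence is Lemma~\ref{holopath} itself and the final implication is extracted from the forward direction of its proof.

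First I would treat the easy direction, Beltrami $\Rightarrow$ holomorphic path. If $\{f_\lambda\}_{\lambda\in\D(\lambda_0,r)}$ is a Beltrami path, then by definition we have a holomorphic motion $h_\lambda:\C\to\C$ based at $\lambda_0$ that is a hybrid conjugacy between $f_{\lambda_0}$ and $f_\lambda$. The relation $f_\lambda\circ h_\lambda=h_{\lambda_0}\circ f_{\lambda_0}$ holds on a whole neighborhood of $K(f_{\lambda_0})$ (in particular on $K(f_{\lambda_0})$ itself), and $\bar\partial h_\lambda=0$ a.e.\ on $K(f_{\lambda_0})$. Thus after an affine reparametrization sending $\lambda_0$ to $0$, the three defining conditions of a locally holomorphic path are satisfied on $\D(\lambda_0,r)$. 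Since locally holomorphic paths are holomorphic paths, the conclusion follows.

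For the harder direction, holomorphic path $\Rightarrow$ Beltrami path, I would invoke Lemma~\ref{holopath} and then reuse the explicit construction from its forward direction. Given a holomorphic path $\{f_\lambda\}_{\lambda\in\D}$, Lemma~\ref{holopath} (the ``only if'' direction already proved) shows that $\lambda\mapsto f_\lambda$ is an analytic map into $\mathcal H(\mathbf 0)$. Fix an arbitrary base point $\lambda_0\in\D$ and a small polydisk $\D(\lambda_0,r)$ sent into one Banach slice $\mathcal S_U=\mathcal B_U^0(f_{\lambda_0},\epsilon)$. The proof of the ``if'' direction of Lemma~\ref{holopath} then produces, by performing surgery on the annulus $\overline V\setminus U_{f_{\lambda_0}}$, pulling back Beltrami differentials, extending by zero on $K(f_{\lambda_0})$, and applying the Measurable Riemann Mapping Theorem with holomorphic parameter dependence, an analytic family of quasiconformal maps $H_\lambda:\C\to\C$ such that $H_\lambda\circ f_{\lambda_0}=f_\lambda\circ H_\lambda$ on a neighborhood of $K(f_{\lambda_0})$ and $\bar\partial H_\lambda=0$ a.e.\ on $K(f_{\lambda_0})$. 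Holomorphy in $\lambda$ of the Beltrami coefficients, together with Slodkowski-type regularity of MRMT solutions, guarantees that $H_\lambda$ is a holomorphic motion based at $\lambda_0$. This is precisely the datum of a Beltrami path on $\D(\lambda_0,r)$, and since $\lambda_0$ was arbitrary, $\{f_\lambda\}_{\lambda\in\D}$ is a Beltrami path.

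The only point that requires care, and which I expect to be the main (though essentially bookkeeping) obstacle, is verifying that the quasiconformal maps $H_\lambda$ produced by the MRMT in the proof of Lemma~\ref{holopath} genuinely form a holomorphic motion in the sense of the Beltrami path definition, rather than merely an analytic family in some weaker Banach-space sense. This reduces to the standard fact that if $\mu_\lambda$ depends holomorphically on $\lambda\in\D(\lambda_0,r)$ with $\|\mu_\lambda\|_\infty<1$, then the normalized solution $H_\lambda$ of $\bar\partial H_\lambda=\mu_\lambda\,\partial H_\lambda$ depends holomorphically on $\lambda$ pointwise in $z$; fixing the normalization that makes $H_{\lambda_0}=\mathrm{id}$ (and hence that $H_\lambda$ is a hybrid conjugacy of $f_{\lambda_0}$ to $f_\lambda$ in a neighborhood of $K(f_{\lambda_0})$) then completes the identification.
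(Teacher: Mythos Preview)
Your proposal is correct and follows essentially the same route as the paper. The paper does not give a separate proof but simply records that the corollary is implied by the proof of Lemma~\ref{holopath}; in particular, the forward direction of that lemma already produces a global hybrid conjugacy via the MRMT (the parenthetical ``Indeed, it is actually a Beltrami path'' in the paper), which is exactly the construction you unpack in your second and third paragraphs.
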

Now we are going to use the homeomorphism $\mathcal I_{\bf b}\circ \mathcal I^{-1}_{\bf 0}$ to define the path holomorphic structure on the hybrid leaf $\mathcal H(\bf b)$ for each $\bf b$.

\begin{definition}[Path holomorphic structure on $\mathcal H(\bf b)$]For every $\bf b \in \mathcal{C}$, a continuous path $\{f_\lambda\}_{\lambda\in \D} \subset \mathcal H(\bf b)$ is a holomorphic path if $\{\mathcal I_{\bf b}\circ \mathcal I^{-1}_{\bf 0}(f_\lambda)\}_{\lambda\in \D}$ is a holomorphic path in $\mathcal H(\bf 0)$. 
\end{definition}
Let $\mathrm h_{\mathbb D}(\cdot,\cdot) $ be the hyperbolic metric on $\D$ and let $\displaystyle d_{\D}(\cdot,\cdot):=\frac{e^{\mathrm h_{\D}}-1}{e^{\mathrm h_{\D}}+1}$. By convexity, $d_{\D}$ is a metric on $\D$.\par
For each hybrid leaf $\mathcal H({\bf b})$, following Avila-Lyubich \cite{Fullhorseshoeunimodal}, we define 
\[d_{\mathcal H({\bf b})}(f_1,f_2)=\sup\limits_{\phi \in {\bf \mathrm{Hol}(\mathcal H({\bf b}),\D)}} d_{\D}(\phi(f_1),\phi(f_2)),\]
for any $f_1,f_2 \in d_{\mathcal H({\bf b})}$.
It  is a well-defined  {\em Carathe\'odory metric} on the path holomorphic space $\mathcal H({\bf b})$. (See \cite[Theorem~4.2 and Lemma~4.1]{Fullhorseshoeunimodal}.)

Since the homeomorphism $\mathcal I_{\bf b}\circ \mathcal I^{-1}_{\bf 0}$ and the renormalization operator $\mathcal R$ map Beltrami paths to Beltrami paths, we obtain:
\begin{Coro}For every $\bf b \in \mathcal{C}$, a continuous path $\{f_\lambda\}_{\lambda\in \D} \subset \mathcal H(\bf b)$ is a holomorphic path if and only if it is  a Beltrami path.
\end{Coro}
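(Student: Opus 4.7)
The plan is to deduce this corollary by combining the preceding corollary (which handles the case $\mathbf{b}=\mathbf{0}$) with the assertion, stated in the sentence immediately before the corollary, that the homeomorphism $\Phi:=\mathcal{I}_{\mathbf{b}}\circ\mathcal{I}_{\mathbf{0}}^{-1}$ and its inverse $\Phi^{-1}$ both send Beltrami paths to Beltrami paths. First I would unwind the definition: by construction of the path holomorphic structure on $\mathcal{H}(\mathbf{b})$, a continuous path $\{f_\lambda\}_{\lambda\in\D}\subset\mathcal{H}(\mathbf{b})$ is holomorphic iff the transported family $\{\Phi^{-1}(f_\lambda)\}_{\lambda\in\D}\subset\mathcal{H}(\mathbf{0})$ is holomorphic. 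By the preceding corollary, this is equivalent to $\{\Phi^{-1}(f_\lambda)\}$ being Beltrami in $\mathcal{H}(\mathbf{0})$. Hence the corollary reduces to the statement that $\{f_\lambda\}$ is Beltrami in $\mathcal{H}(\mathbf{b})$ iff $\{\Phi^{-1}(f_\lambda)\}$ is Beltrami in $\mathcal{H}(\mathbf{0})$, and by symmetry this amounts to showing that $\Phi$ preserves the class of Beltrami paths.

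For the core transport step, I would proceed as follows. Suppose $\{g_\lambda\}_{\lambda\in\D(\lambda_0,r)}\subset\mathcal{H}(\mathbf{0})$ is Beltrami, with holomorphic motion $H_\lambda:\C\to\C$ providing a hybrid conjugacy $H_\lambda\circ g_{\lambda_0}=g_\lambda\circ H_\lambda$ near $K(g_{\lambda_0})$. The Beltrami coefficient $\mu_\lambda$ of $H_\lambda$ is supported on $K(g_{\lambda_0})^c$ and is a holomorphic family in $\lambda$. Set $f_{\lambda_0}:=\Phi(g_{\lambda_0})\in\mathcal{H}(\mathbf{b})$; by Theorem~\ref{externalstructure} the external maps of $g_{\lambda_0}$ and $f_{\lambda_0}$ coincide in $\mathcal{E}_{\mathbf{N}}$, so the surgery construction in the proof of Theorem~\ref{externalstructure} yields a quasiconformal homeomorphism $\Psi:\C\to\C$ intertwining $g_{\lambda_0}$ and $f_{\lambda_0}$, conformal on $K(g_{\lambda_0})$. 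Push $\mu_\lambda$ forward via $\Psi$ to $\nu_\lambda:=\Psi_*\mu_\lambda$, obtaining a holomorphic family of Beltrami coefficients supported on $K(f_{\lambda_0})^c$. Solving the Beltrami equation with the normalization fixing the marked points of $\mathcal{H}(\mathbf{b})$ produces a holomorphic motion $K_\lambda:\C\to\C$, and the candidate family is $\tilde f_\lambda:=K_\lambda\circ f_{\lambda_0}\circ K_\lambda^{-1}$, which is a Beltrami path in $\mathcal{H}(\mathbf{b})$.

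The decisive verification, and the main obstacle, is that $\tilde f_\lambda$ equals $\Phi(g_\lambda)$ rather than merely lying in the same hybrid leaf. Here I would exploit that $\Psi$ is conformal on $K(g_{\lambda_0})$, so the pushforward operation preserves the Böttcher structure on the basins; consequently the external map of $\tilde f_\lambda$ coincides with the external map of $g_\lambda$. Since Theorem~\ref{externalstructure} characterizes $\Phi(g_\lambda)$ as the unique element of $\mathcal{H}(\mathbf{b})$ with that external map, we conclude $\tilde f_\lambda=\Phi(g_\lambda)$, so $\{\Phi(g_\lambda)\}$ is indeed a Beltrami path. A completely parallel argument shows that $\Phi^{-1}$ preserves Beltrami paths, and combining these with the definition-chase of the first paragraph yields the corollary. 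The trickiest point is keeping track of the several normalizations (Böttcher marking in $\mathcal{E}_{\mathbf{N}}$, affine normalization in the hybrid leaves) simultaneously with the holomorphic dependence on $\lambda$; this is handled exactly as in the construction of $\mathcal{I}_{\mathbf{b}}$ in the proof of Theorem~\ref{externalstructure}, using that the normalizing affine factors vary holomorphically.
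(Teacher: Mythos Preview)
Your reduction in the first paragraph is exactly the paper's argument: the corollary is stated immediately after the sentence ``Since the homeomorphism $\mathcal I_{\bf b}\circ \mathcal I^{-1}_{\bf 0}$ \dots\ map[s] Beltrami paths to Beltrami paths, we obtain,'' and no further proof is given. So the approach matches, and you go further by actually sketching why $\Phi$ preserves Beltrami paths, which the paper leaves as an assertion.

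There is, however, a slip in your sketch that you should fix. You twice write that the surgery map $\Psi$ is ``conformal on $K(g_{\lambda_0})$.'' This is false: $g_{\lambda_0}\in\mathcal H(\mathbf 0)$ and $f_{\lambda_0}\in\mathcal H(\mathbf b)$ are not hybrid equivalent, so no conjugacy between them can be conformal on the filled Julia set. What is true---and what your subsequent reasoning about B\"ottcher structure actually uses---is that $\Psi$ can be taken conformal on the \emph{complement} $\C\setminus K(g_{\lambda_0})$, since $g_{\lambda_0}$ and $f_{\lambda_0}$ share the same external map and hence their B\"ottcher coordinates identify the exteriors conformally. This is exactly the region where $\mu_\lambda$ is supported, so the pushforward $\nu_\lambda$ is well-defined there and you extend by zero on $K(f_{\lambda_0})$. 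With that correction your transport argument goes through: the external map of $\tilde f_\lambda$ coincides with that of $g_\lambda$ by construction, and the bijectivity of $\mathcal I_{\bf b}$ forces $\tilde f_\lambda=\Phi(g_\lambda)$.
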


\begin{Lemma}For every $\bf b_1, \bf b_2 \in \mathcal{C}$, the renormalization operator $\mathcal R:\mathcal H(\bf b_1) \to \mathcal H(\bf b_2)$ is path holomorphic.
\end{Lemma}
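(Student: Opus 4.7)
The plan is to prove path-holomorphicity by invoking the Corollary just above the lemma: a continuous path in any hybrid leaf $\mathcal H({\bf b})$ is holomorphic if and only if it is a Beltrami path. So given a holomorphic path $\{f_\lambda\}_{\lambda\in\D}\subset \mathcal H({\bf b_1})$, I would fix an arbitrary $\lambda_0\in\D$ and show that $\{\mathcal R f_\lambda\}$ is a Beltrami path on some disk $\D(\lambda_0,r)$. By hypothesis, there is a holomorphic motion $h_\lambda:\C\to\C$ over $\D(\lambda_0,r)$, based at $\lambda_0$, providing a hybrid conjugacy $f_\lambda\circ h_\lambda=h_\lambda\circ f_{\lambda_0}$ near $K(f_{\lambda_0})$ with $\bar\partial h_\lambda=0$ a.e.\ on $K(f_{\lambda_0})$.

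Next I would unpack the construction of $\mathcal R f_\lambda$ given in the previous section. Writing $p$ for the renormalization period of $f_{\lambda_0}$, the hybrid conjugacy $h_\lambda$ transports the restrictive Jordan disk of $f_{\lambda_0}$ to a restrictive Jordan disk for $f_\lambda$, and $f_\lambda^p$ restricted there is a pre-renormalization. The normalization to land in $\mathcal H({\bf b_2})$ is performed by an affine map $\Lambda_\lambda$ whose coefficients are determined by the repelling periodic point on the boundary of the restrictive interval for $f_\lambda^p$ together with the critical value. Since those periodic points are simple zeros of $f_\lambda^p(z)-z$ moving holomorphically with $\lambda$ (implicit function theorem), and since $h_\lambda$ depends holomorphically on $\lambda$ by definition of a holomorphic motion, the family $\lambda\mapsto\Lambda_\lambda$ is holomorphic, and $\mathcal R f_\lambda=\Lambda_\lambda\circ f_\lambda^p\circ\Lambda_\lambda^{-1}$.

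Then I would define
\[
H_\lambda := \Lambda_\lambda\circ h_\lambda\circ \Lambda_{\lambda_0}^{-1},
\]
which is a holomorphic motion over $\D(\lambda_0,r)$ based at $\lambda_0$ (composition of a holomorphic motion with two holomorphically varying affine maps). A direct computation using $f_\lambda\circ h_\lambda=h_\lambda\circ f_{\lambda_0}$ gives
\[
H_\lambda\circ \mathcal R f_{\lambda_0}=\Lambda_\lambda\circ h_\lambda\circ f_{\lambda_0}^p\circ\Lambda_{\lambda_0}^{-1}=\Lambda_\lambda\circ f_\lambda^p\circ h_\lambda\circ\Lambda_{\lambda_0}^{-1}=\mathcal R f_\lambda\circ H_\lambda
\]
on $K(\mathcal R f_{\lambda_0})$. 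Moreover $\Lambda_{\lambda_0}^{-1}(K(\mathcal R f_{\lambda_0}))$ is a periodic piece of $K(f_{\lambda_0})$, so $\bar\partial h_\lambda=0$ a.e.\ on $\Lambda_{\lambda_0}^{-1}(K(\mathcal R f_{\lambda_0}))$; composition with the conformal affine maps preserves this, giving $\bar\partial H_\lambda=0$ a.e.\ on $K(\mathcal R f_{\lambda_0})$. Hence $H_\lambda$ is a hybrid conjugacy, witnessing that $\{\mathcal R f_\lambda\}$ is locally a Beltrami path at $\lambda_0$.

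I expect the main technical point to be the holomorphic dependence of the normalizing affine maps $\Lambda_\lambda$ on $\lambda$. Once that is established via the implicit function theorem applied to the repelling periodic point defining the restrictive interval (which is where one uses that $f_{\lambda_0}$ is infinitely renormalizable and hence has no super-attracting cycle on this level, so the periodic point is genuinely repelling and depends analytically on $\lambda$), the rest is a formal manipulation. The simply-connectedness of $\mathcal H({\bf b_1})$, already invoked to define $\mathcal R f$ on a whole hybrid leaf, ensures that the local Beltrami structures match up globally, so that $\{\mathcal R f_\lambda\}_{\lambda\in\D}$ is truly a Beltrami path over $\D$, and therefore a holomorphic path in $\mathcal H({\bf b_2})$.
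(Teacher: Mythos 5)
Your proposal is correct and matches the paper's approach: the paper gives no explicit proof, simply asserting just before the adjacent Corollary that $\mathcal R$ maps Beltrami paths to Beltrami paths (and then the Lemma follows because holomorphic and Beltrami paths coincide), and your argument is precisely that assertion spelled out — conjugating the holomorphic motion $h_\lambda$ by the holomorphically varying normalizing affine maps $\Lambda_\lambda$ to produce the required holomorphic motion $H_\lambda$ for the renormalized family. One tiny slip: the affine normalization $\Lambda_\lambda$ is pinned down by sending the critical point to $0$ and the boundary periodic point to $-1$, not the critical value, but this does not affect the argument since both objects move holomorphically via $h_\lambda$.
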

Recall that $\mathcal{I}$ is the set of all the infinitely renormalizable multimodal maps of type $\bf N$. A hybrid leaf $\mathcal H(\bf b)$ is called real-symmetric if it contains a polynomial-like extension for some multimodal maps of type $\bf N$. Let $\widehat {\mathcal{I}}=\bigcup\limits_{f\in \mathcal I} \mathcal H(\chi(f))$, where $\mathcal H(\chi(f))$ is the real-symmetric hybrid leaf containing $f$. A family $\mathcal F\subset \widehat{ \mathcal I}$ is said to have {\em beau bounds} if  there exists $\epsilon_0>0$ such that for any $\delta>0$ there is a moment $n_{\delta}$ so that  $\mathrm{mod}(\mathcal R^n f)\ge \epsilon $ for all $n\ge n_{\delta}$ and any $f \in \mathcal F$ with $\mathrm{mod}(f)\ge \delta$. By Theorem~\ref{shen1}, $\mathcal I$ has beau bounds.
Let us restate the following two theorems in \cite{Fullhorseshoeunimodal} to our situation. For more details, we refer the readers to section $6-8$ in \cite{Fullhorseshoeunimodal}.
\begin{Theorem}\cite[Theorem~6.2]{Fullhorseshoeunimodal} There exists $\epsilon_0>0$ with the following property. For any $\gamma>0$ and $\delta>0$ there exists
$N = N(\gamma,\delta)$ such that for any two maps $f,\tilde f \in \mathcal C_{\bf N}(\delta)\cap \widehat{\mathcal I}$ in the same real-symmetric hybrid leaf we have
\[\mathcal R^{k}f ,\mathcal R^k \tilde f \in \mathcal H_{{\bf b}_k}(\epsilon_0), ~~\text{and}~~ \mathrm{d}_{\mathcal H_{{\bf b}_k}}(\mathcal R^{k}f ,\mathcal R^k \tilde f)<\gamma,~~k\ge N,\] 
where ${{\bf b}_k} =\chi(\mathcal R^k f)$.
\end{Theorem}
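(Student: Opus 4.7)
The plan is to combine three ingredients: Shen's complex bounds (Theorem~\ref{shen1}), the fact that the renormalization operator is path holomorphic between hybrid leaves (established in the preceding subsection), and a Schwarz--Pick/cocycle argument to extract a strict, exponential contraction rate.

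First, I would use Theorem~\ref{shen1} to dispatch the membership claim: since $\mathcal{I}$ has beau bounds with some uniform modulus $\epsilon_0>0$, for every $\delta>0$ there is a threshold $n_0=n_0(\delta)$ so that whenever $f\in \mathcal{C}_{\bf N}(\delta)\cap \widehat{\mathcal I}$ and $k\ge n_0$, one has $\mathcal{R}^k f\in \mathcal{H}_{{\bf b}_k}(2\epsilon_0)$. By beau bounds applied simultaneously to $f$ and $\tilde f$, which lie in the same real-symmetric hybrid leaf and therefore share $\chi(\mathcal{R}^k f)=\chi(\mathcal{R}^k\tilde f)={\bf b}_k$ for each $k$, both orbits enter $\mathcal{H}_{{\bf b}_k}(2\epsilon_0)$ after $n_0$ renormalizations.

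Next, I would invoke the lemma that $\mathcal{R}\colon \mathcal{H}({\bf b})\to\mathcal{H}({\bf b}')$ is path holomorphic. Since Carathéodory pseudometrics are contracted by any path holomorphic map (a direct consequence of the Schwarz--Pick lemma applied to each testing function $\phi\in\mathbf{Hol}(\mathcal{H}({\bf b}'),\mathbb{D})$ composed with $\mathcal{R}$), one gets the non-expansion estimate
\[
d_{\mathcal{H}_{{\bf b}_{k+1}}}\bigl(\mathcal{R}^{k+1}f,\mathcal{R}^{k+1}\tilde f\bigr)\le d_{\mathcal{H}_{{\bf b}_k}}\bigl(\mathcal{R}^k f,\mathcal{R}^k \tilde f\bigr)
\]
for every $k\ge n_0$. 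Combined with the fact that $\mathcal{H}({\bf b},\epsilon_0)$ is precompact (so its Carathéodory diameter is bounded by a finite constant $D=D(\epsilon_0)$), this already gives uniform boundedness of the distance along the orbit.

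To promote non-expansion to exponential decay I would run a cocycle argument modeled on \cite[\S 6--8]{Fullhorseshoeunimodal}. The key geometric input is that, by beau bounds again, some further iterate $\mathcal{R}^{m}$ sends $\mathcal{H}_{{\bf b}_k}(\epsilon_0)$ into $\mathcal{H}_{{\bf b}_{k+m}}(2\epsilon_0)$, which is relatively compact in $\mathcal{H}_{{\bf b}_{k+m}}(\epsilon_0)$. A Schwarz--Pick-type lemma for path holomorphic maps whose image is compactly contained then produces a contraction constant $\lambda=\lambda(\epsilon_0)<1$ independent of ${\bf b}_k$, so that
\[
d_{\mathcal{H}_{{\bf b}_{k+m}}}\bigl(\mathcal{R}^{k+m}f,\mathcal{R}^{k+m}\tilde f\bigr)\le \lambda\, d_{\mathcal{H}_{{\bf b}_k}}\bigl(\mathcal{R}^k f,\mathcal{R}^k \tilde f\bigr).
\]
Iterating and using $d\le D$ at the starting moment $n_0$, one obtains $d_{\mathcal{H}_{{\bf b}_k}}(\mathcal{R}^k f,\mathcal{R}^k \tilde f)\le D\lambda^{\lfloor (k-n_0)/m\rfloor}$, which is smaller than any prescribed $\gamma$ once $k\ge N(\gamma,\delta)$.

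The main obstacle is the uniformity of $\lambda$ across the varying hybrid leaves $\mathcal{H}_{{\bf b}_k}$: unlike the unimodal setting where there is essentially one family of quadratic-like leaves, here the combinatorics ${\bf b}_k$ can change arbitrarily at each step, and one must check that the Schwarz--Pick contraction rate obtained from compact containment does not degenerate as ${\bf b}_k$ varies. This is exactly the place where a cocycle-style argument (running a holomorphic motion over a disk of perturbations and using the uniform modulus $\epsilon_0$ to control the geometry of the image) replaces the straightforward single-leaf estimate; I would model it closely on the construction of \cite[\S 6--8]{Fullhorseshoeunimodal}, substituting Theorem~\ref{shen1} wherever the unimodal complex bounds are invoked there.
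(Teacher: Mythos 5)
Your proposal follows essentially the same route as the paper: use Shen's beau bounds (Theorem~\ref{shen1}) for the modulus/membership claim, regard $\mathcal R$ as a path-holomorphic map between hybrid leaves to get Schwarz--Pick non-expansion of the Carath\'eodory metric, and then invoke the Avila--Lyubich cocycle machinery (\cite[\S 6--8]{Fullhorseshoeunimodal}) to promote non-expansion to uniform contraction across the varying leaves. The paper does exactly this, but is more concrete where you are schematic: it transfers every real-symmetric hybrid leaf to the reference leaf $\mathcal H(\bf 0)$ via the homeomorphisms $\Psi_{\bf b}:=\mathcal I_{\bf 0}\circ \mathcal I_{\bf b}^{-1}$, explicitly defines the cocycle $G^{m,n}(\Psi_{\bf b}(f)):=\Psi_{{\bf b}_{n-m}}(\mathcal R^{n-m}(f))$ with values in $\mathrm{Hol}(\mathcal H(\bf 0),\mathcal H(\bf 0))$, verifies the hypotheses of \cite[Theorem~6.3]{Fullhorseshoeunimodal}, and concludes; this is precisely the device you gesture at with ``running a holomorphic motion over a disk of perturbations.''

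One concrete imprecision in your step on strict contraction: beau bounds do \emph{not} assert that $\mathcal R^m$ improves the modulus from $\epsilon_0$ to $2\epsilon_0$ --- the constant $\epsilon_0$ is the a priori output, not something iteration strengthens --- and the compact containment you want is not ``$\mathcal H_{{\bf b}_{k+m}}(2\epsilon_0)$ is relatively compact in $\mathcal H_{{\bf b}_{k+m}}(\epsilon_0)$.'' The relevant point is that after enough steps the orbit lands in $\mathcal H_{{\bf b}'}(\epsilon_0)$, which is a compact subset of the full (non-compact) leaf $\mathcal H({\bf b}')$ on which the Carath\'eodory metric is actually defined; it is this compact containment in the ambient leaf, uniform in ${\bf b}'$ thanks to beau bounds, that feeds into the Schwarz--Pick/Earle--Hamilton-type mechanism inside the cocycle theorem. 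As stated, your step would fail literally, but the fix is exactly what \cite[Theorem~6.3]{Fullhorseshoeunimodal} packages, which you correctly identify as the ultimate reference.
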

\begin{proof} For each real-symmetric hybrid leaf $\mathcal H(\bf b)$, we can associate a cocycle $G=G_{\bf b}$ with values in $\bf{\mathrm Hol}(\mathcal H(\bf 0),\mathcal H(\bf 0))$ as following: for each $\bf b \in \mathcal C$, let $\Psi_{\bf b}:=\mathcal I_{\bf 0}\circ \mathcal I^{-1}_{\bf b}:\mathcal H(\bf b)\to \mathcal H(\bf 0)$ and we define 
\[G^{m,n}(\Psi_{\bf b}(f)):=\Psi_{{\bf b}_{n-m}}(\mathcal R^{n-m}(f)),\]
where ${\bf b}_{n-m}=\chi(\mathcal R^{n-m}(f))$.  Let $\mathcal G$ be the set of all such cocycles which correspond to real-symmetric hybrid
leaves. 
Similar to the unicritical case, one can show that $\mathcal G$ satisfies the hypotheses of \cite[Theorem~6.3]{Fullhorseshoeunimodal} (see \cite[Lemma~6.4,Lemma~6.5]{Fullhorseshoeunimodal} for an example). Then the Theorem follows from~\cite[Theorem~6.3]{Fullhorseshoeunimodal}.
\end{proof}

\begin{Theorem}\label{contraction}\cite[Theorem~5.1]{Fullhorseshoeunimodal}Let $\mathcal F\subset \mathcal C_{\bf N}$ be a family of infinitely renormalizable maps with beau bounds which is forward invariant under renormalization. If $\mathcal F$ is a union of entire hybrid leaves then there exists $\lambda<1$ such that whenever $f, \tilde f \in \mathcal F$ are in the same hybrid leaf, we have
\[\mathrm{d}_{\mathcal H_{{\bf b}_k}}(\mathcal R^k f , \mathcal R^k \tilde f )\le C\lambda^k, k \in \mathbb N,\]
where ${{\bf b}_k} =\chi(\mathcal R^k f)$ and $C > 0$ only depends on $\mathrm{mod} f$  and $\mathrm{mod} \tilde f$.
\end{Theorem}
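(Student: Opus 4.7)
The plan is to upgrade the qualitative convergence-to-zero available from the preceding theorem into an exponential rate, by combining path holomorphic non-expansion with a one-step strict contraction delivered by beau bounds. Since each renormalization $\mathcal R:\mathcal H(\mathbf{b}_k)\to \mathcal H(\mathbf{b}_{k+1})$ is path holomorphic and the Carath\'{e}odory pseudometric on a path holomorphic space is defined as the supremum over path holomorphic functions into $\mathbb D$, one immediately gets the non-expansion
$$
\mathrm{d}_{\mathcal H_{\mathbf{b}_{k+1}}}(\mathcal R^{k+1}f,\mathcal R^{k+1}\tilde f)\le \mathrm{d}_{\mathcal H_{\mathbf{b}_k}}(\mathcal R^k f,\mathcal R^k \tilde f),
$$
so the distance sequence $d_k$ is monotone non-increasing. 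By the beau bounds hypothesis on $\mathcal F$, there exist a universal $\epsilon_0>0$ and $n_0=n_0(\mathrm{mod}\,f,\mathrm{mod}\,\tilde f)$ with $\mathcal R^kf,\mathcal R^k\tilde f\in \mathcal H_{\mathbf{b}_k}(\epsilon_0)$ for every $k\ge n_0$; the first $n_0$ iterates are absorbed into the constant $C$, so it suffices to prove exponential contraction for pairs already lying in the universal compact region $\mathcal H_{\mathbf b}(\epsilon_0)$ of each leaf.

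The decisive step is a uniform one-step strict contraction: find an integer $N_\star$ and $\theta<1$, depending only on $\epsilon_0$ and the beau bound, such that for every real-symmetric leaf $\mathcal H(\mathbf b)$ and every pair $g,\tilde g\in \mathcal H_{\mathbf b}(\epsilon_0)\cap \mathcal F$ in that leaf,
$$
\mathrm{d}_{\mathcal H_{\mathbf b_{N_\star}}}(\mathcal R^{N_\star}g,\mathcal R^{N_\star}\tilde g)\le \theta\, \mathrm{d}_{\mathcal H_{\mathbf b}}(g,\tilde g).
$$
I would prove this by transferring the dynamics to the fixed path holomorphic space $\mathcal H(\mathbf 0)$ via the homeomorphisms $\Psi_{\mathbf b}=\mathcal I_{\mathbf 0}\circ \mathcal I_{\mathbf b}^{-1}$, obtaining the cocycle $G^{m,n}:\mathcal H(\mathbf 0)\to \mathcal H(\mathbf 0)$ of path holomorphic self-maps already introduced in the proof of the preceding theorem. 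The precompactness of $\mathcal C_{\mathbf N}(\epsilon_0)$ provides a universal upper bound $M$ on the Carath\'{e}odory diameter of each $\Psi_{\mathbf b}(\mathcal H_{\mathbf b}(\epsilon_0))$ in $\mathcal H(\mathbf 0)$. Applying the beau bounds one further time, $\mathcal R^{N_\star}$ pushes $\mathcal H_{\mathbf b}(\epsilon_0)$ into $\mathcal H_{\mathbf{b}_{N_\star}}(2\epsilon_0)$, and after conjugation by $\Psi$ this image sits strictly inside $\Psi_{\mathbf{b}_{N_\star}}(\mathcal H_{\mathbf{b}_{N_\star}}(\epsilon_0))$ with a definite Carath\'{e}odory gap. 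A Schwarz-type lemma for path holomorphic maps (lift along a holomorphic path realising the Carath\'{e}odory distance, apply the ordinary Schwarz lemma on $\mathbb D$, and quantify using the gap) then yields a uniform $\theta<1$.

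Iterating the strict contraction gives $d_{n_0+jN_\star}\le \theta^j M$, while non-expansion handles the intermediate steps; setting $\lambda=\theta^{1/N_\star}$ and absorbing the finite discrepancy from the first $n_0$ iterates into $C=C(\mathrm{mod}\,f,\mathrm{mod}\,\tilde f)$ finishes the proof. The main obstacle is the leaf-uniform quantitative Schwarz step: the Carath\'{e}odory metric is only defined abstractly via path holomorphic functions, so the strict contraction cannot be produced by a direct computation inside a single leaf. The correct viewpoint, following Avila--Lyubich in the unimodal case, is to work with the cocycle $G^{m,n}$ on the fixed space $\mathcal H(\mathbf 0)$, verify uniform precompactness of its orbits through the conjugated compact pieces, and invoke the general exponential-contraction theorem for path holomorphic cocycles, whose hypotheses are checked using the multimodal beau bounds and the path holomorphic structure established earlier in this section.
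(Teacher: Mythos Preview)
The paper does not give its own proof of this theorem; it is stated as a direct citation of Avila--Lyubich \cite[Theorem~5.1]{Fullhorseshoeunimodal}, and your final paragraph correctly identifies that the actual argument proceeds through the cocycle framework on the fixed model space $\mathcal H(\mathbf 0)$, exactly as the paper (and AL) do.

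That said, the direct Schwarz argument you sketch in the middle has a genuine gap. The assertion that ``applying the beau bounds one further time, $\mathcal R^{N_\star}$ pushes $\mathcal H_{\mathbf b}(\epsilon_0)$ into $\mathcal H_{\mathbf b_{N_\star}}(2\epsilon_0)$'' is not a consequence of beau bounds: the definition only guarantees that orbits eventually enter $\{\mathrm{mod}\ge\epsilon_0\}$, never that the modulus improves beyond the universal threshold $\epsilon_0$. So the ``definite Carath\'eodory gap'' you need does not arise this way. In Avila--Lyubich the strict contraction comes instead from the fact that $\mathcal H_{\mathbf b}(\epsilon_0)$ itself already has Carath\'eodory diameter at most some $\rho(\epsilon_0)<1$ inside the full leaf $\mathcal H(\mathbf b)$, and a path holomorphic map whose image lies in a set of diameter $\rho<1$ is $\rho$-Lipschitz for the Carath\'eodory metric; the delicate point is that $\mathcal R^N$ on the \emph{entire} leaf does not land in the compact part, so one must work through a Beltrami disk joining the two points and control moduli along it, which is precisely what the cocycle machinery packages. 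A smaller point: your phrase ``lift along a holomorphic path realising the Carath\'eodory distance'' conflates the Kobayashi and Carath\'eodory constructions --- the Carath\'eodory metric is defined as a supremum over path holomorphic maps \emph{into} $\mathbb D$, not via paths \emph{from} $\mathbb D$, so the Schwarz step is applied after composing with such a test function rather than after lifting a path.
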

Combine these two theorem, we conclude that
\begin{Coro}For any two $f,\tilde f \in \widehat{\mathcal I}$ in the same real-symmetric hybrid leaf, then there exists $C>0$ and $0<\lambda<1$ such that for sufficiently large $k\in \mathbb N$,
\[\mathrm{d}_{\mathcal H_{{\bf b}_k}}(\mathcal R^k f, \mathcal R^k \tilde f )\le C\lambda^k.\]
where ${{\bf b}_k} =\chi(\mathcal R^k f)$ and $C > 0$ only depends on $\mathrm{mod} f$  and $\mathrm{mod} \tilde f$.
\end{Coro}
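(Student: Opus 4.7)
The plan is to show that the family $\widehat{\mathcal{I}}$ itself fits the hypotheses of Theorem~\ref{contraction} (Avila--Lyubich's exponential contraction theorem), and then simply invoke that theorem; Theorem~6.2 enters only through the beau-bounds verification.

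First I would check the structural hypotheses. By its very definition $\widehat{\mathcal{I}}=\bigcup_{f\in\mathcal{I}}\mathcal{H}(\chi(f))$ is a union of entire real-symmetric hybrid leaves, so it is a union of entire hybrid leaves. For forward invariance under $\mathcal{R}$, one observes that if $g\in \mathcal{H}(\chi(f))$ for some $f\in\mathcal{I}$, then the renormalization $\mathcal{R}g$ lies in the hybrid leaf of $\mathcal{R}f$; since $\mathcal{R}f\in\mathcal{I}$, we have $\mathcal{R}g\in\widehat{\mathcal{I}}$. Thus $\mathcal{R}(\widehat{\mathcal{I}})\subset\widehat{\mathcal{I}}$.

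Second, I would verify that $\widehat{\mathcal{I}}$ has beau bounds, i.e.\ that there is $\epsilon_0>0$ such that for every $\delta>0$ one has $n_\delta$ with $\mathrm{mod}(\mathcal{R}^ng)\ge\epsilon_0$ for all $n\ge n_\delta$ and all $g\in\widehat{\mathcal{I}}$ with $\mathrm{mod}(g)\ge\delta$. For genuinely real maps $f\in\mathcal{I}$ this is precisely Shen's a-priori complex bounds (Theorem~\ref{shen1}). For an arbitrary $g\in\mathcal{H}(\chi(f))$ in the same real-symmetric leaf as some $f\in\mathcal{I}$, the conclusion of Theorem~6.2 (proved via the cocycle machinery) already gives $\mathcal{R}^kg\in\mathcal{H}_{{\bf b}_k}(\epsilon_0)$ for $k\ge N(\mathrm{mod}(g),1)$; this is exactly the beau-bounds property for the whole of $\widehat{\mathcal{I}}$, not only its real slice.

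Having checked the three hypotheses, I would apply Theorem~\ref{contraction} with $\mathcal F=\widehat{\mathcal I}$: there exists a universal $\lambda<1$ so that for any $f,\tilde f\in\widehat{\mathcal I}$ lying in the same (real-symmetric) hybrid leaf,
\[
\mathrm{d}_{\mathcal H_{{\bf b}_k}}\bigl(\mathcal R^kf,\mathcal R^k\tilde f\bigr)\le C\lambda^k,\qquad k\in\mathbb N,
\]
with $C=C(\mathrm{mod}\,f,\mathrm{mod}\,\tilde f)$, which is exactly the statement of the corollary. The main (already-accomplished) obstacle in this chain is the beau-bounds verification for $\widehat{\mathcal{I}}$; once Theorem~6.2 supplies it, Theorem~\ref{contraction} is a direct quotation from Avila--Lyubich and delivers the exponential decay with no further work.
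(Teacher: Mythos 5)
Your proof is correct and matches the paper's (very terse) argument: the paper simply states that the corollary is obtained by combining the two preceding theorems. You have spelled out what that combination means---checking that $\widehat{\mathcal I}$ is forward-invariant, a union of entire hybrid leaves, and inherits beau bounds from the first cited theorem, so that Theorem~\ref{contraction} applies directly with $\mathcal F=\widehat{\mathcal I}$.
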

Now we are going to prove Theorem~B.
\begin{TB}Let $\mathcal I$ and $\Sigma$ be as in the assumptions of Theorem~A. Then there exists a precompact subset $\mathcal A\subset \mathcal I$ and a topological semi-conjugacy between $\mathcal R|_{\mathcal A}$ and  a two-sided  full shift  on $\Sigma^{\mathbb Z}$.
\end{TB}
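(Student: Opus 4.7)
The plan is to construct $h\colon\Sigma^{\mathbb Z}\to\mathcal I$ by approximating each bi-infinite combinatorics by the forward combinatorics of real polynomials of type $\mathbf N$, applying the renormalization operator to shift the window, and extracting a limit via the complex bounds; this is the Avila--Lyubich strategy \cite{Fullhorseshoeunimodal}. The needed ingredients are all in place: existence and uniqueness of a real polynomial realizing any prescribed forward combinatorics (\cite[\S 2.1, \S 5.1]{SmaniaPhase} together with \cite{realRigidity}), Shen's complex bounds (Theorem~\ref{shen1}), continuity of combinatorial limits (Lemma~\ref{cc}), continuity of $\mathcal R$ (Lemma~\ref{continuity}) and of $\mathcal R^{-1}$ on totally $\mathcal R$-invariant precompact sets (Theorem~C), and the exponential Carath\'eodory contraction along real-symmetric hybrid leaves (the Corollary following Theorem~\ref{contraction}).

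Given $\underline{\mathcal M}=(\mathcal M_k)_{k\in\mathbb Z}\in\Sigma^{\mathbb Z}$, for each $n\ge 0$ I would choose the real polynomial $P_n\in\mathcal I$ of type $\mathbf N$ with forward combinatorics $(\mathcal M_{-n},\mathcal M_{-n+1},\ldots)$, and set $f_n:=\mathcal R^n P_n$. Then $f_n$ has forward combinatorics $(\mathcal M_k)_{k\ge 0}$, and for each $1\le j\le n$ it admits the bona fide anti-renormalization $\mathcal R^{-j}f_n=\mathcal R^{n-j}P_n$, which has combinatorial tail $(\mathcal M_{-j},\mathcal M_{-j+1},\ldots)$. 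Theorem~\ref{shen1} provides $\epsilon_0>0$ and $n_0=n_0(\underline{\mathcal M})$ so that $\mathrm{mod}(f_n)\ge\epsilon_0$ for every $n\ge n_0$, hence $\{f_n\}$ is $C^3$-precompact in $\mathcal I$; any subsequential limit $f^\ast$ is bi-infinitely renormalizable with the prescribed combinatorics by Lemma~\ref{continuity} applied at every level. I would set $h(\underline{\mathcal M}):=f^\ast$ and $\mathcal A:=h(\Sigma^{\mathbb Z})$.

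The identities $\mathcal R(h(\underline{\mathcal M}))=h(\sigma\underline{\mathcal M})$ and $\mathcal R(\mathcal A)=\mathcal A$ are built into the construction, and $\mathcal A$ is precompact since its $\mathcal R$-orbit lies in the compact slice of maps with $\mathrm{mod}\ge\epsilon_0$ after finitely many iterations. Because $\mathcal R\colon\mathcal A\to\mathcal A$ is bijective, $\mathcal A$ is totally $\mathcal R$-invariant, so Lemma~\ref{continuity} together with Theorem~C makes $\mathcal R^{\pm 1}$ continuous on $\mathcal A$. For the continuity of $h$, let $\underline{\mathcal M}^{(j)}\to\underline{\mathcal M}$ and take any $C^3$-limit $g$ of $\{h(\underline{\mathcal M}^{(j)})\}\subset\mathcal A$; applying Lemma~\ref{cc} levelwise via the continuity of $\mathcal R^{\pm 1}$ on $\mathcal A$ shows that $g$ realizes $\underline{\mathcal M}$. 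The main obstacle---the unique realization of each bi-infinite combinatorics in $\mathcal A$, which forces $g=h(\underline{\mathcal M})$ and also yields the injectivity of $h$---is handled by the exponential Carath\'eodory contraction along real-symmetric hybrid leaves: two elements of $\mathcal A$ with identical bi-infinite combinatorics can be identified as lying in a common real-symmetric hybrid leaf (using the complexification of Section~4 and the path-holomorphic structure of Section~5), and iterating $\mathcal R$ then collapses their Carath\'eodory distance to zero, forcing them to coincide and completing the construction of the semi-conjugacy.
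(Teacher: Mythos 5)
Your construction and overall strategy match the paper's: realize the one--sided tails by real polynomials of type $\mathbf N$, push forward by $\mathcal R^n$, use Shen's complex bounds (Theorem~\ref{shen1}) to extract $C^3$--limits, use Lemma~\ref{cc} to identify the combinatorics of the limits, and then invoke the Carath\'eodory contraction along real--symmetric hybrid leaves (Theorem~\ref{contraction}) plus Kozlovski--Shen--van Strien rigidity to get \emph{uniqueness} of the map realizing a given bi--infinite combinatorics. That much is sound.

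The gap is in the injectivity of $h$. You write that the ``unique realization of each bi--infinite combinatorics \ldots also yields the injectivity of $h$.'' These are logically independent statements: uniqueness is the implication (combinatorics $\Rightarrow$ at most one map), whereas injectivity of $h$ is the reverse implication (map $\Rightarrow$ at most one combinatorics). To rule out that a single $f\in\mathcal A$ carries two distinct bi--infinite combinatorics, you must know that $f$ has a \emph{unique} backward $\mathcal R$--orbit. That is exactly Smania's injectivity of the renormalization operator $\mathcal R$ (cited in the paper as \cite[Proposition~2.2]{smania2016solenoidal}), and it is what the paper's proof invokes at this point; the Carath\'eodory contraction argument gives you nothing here, because two different backward orbits of the same map agree forward and hence cannot be separated by iterating $\mathcal R$ forward. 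The same omission also silently underlies your assertion that $\mathcal R\colon\mathcal A\to\mathcal A$ is bijective, which you then feed into Theorem~C. So you need to add the appeal to Smania's injectivity of $\mathcal R$ before claiming $h$ is a bijection and $\mathcal A$ is totally $\mathcal R$--invariant.

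Two smaller remarks. First, in the construction of the bi--infinite backward orbit of the limit $f^\ast$, the citation should be to Lemma~\ref{cc} (to identify the combinatorics of each subsequential limit $g_{-j}$ of $\mathcal R^{n-j}P_n$) in addition to Lemma~\ref{continuity} (to get $\mathcal R g_{-j-1}=g_{-j}$); a diagonal extraction over $j$ is implicit and should be made explicit. Second, your use of Theorem~C in the proof of the continuity of $h$ is heavier machinery than needed and the paper does not do so: continuity of $h$ follows from precompactness, Lemma~\ref{cc} and Lemma~\ref{continuity} applied levelwise along a diagonal subsequence, together with the already established uniqueness; Theorem~C (continuity of $\mathcal R^{-1}$) is really what is needed for Theorem~A, i.e.\ for continuity of $h^{-1}$, not of $h$.
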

\begin{proof}The proof is similar to Avila and Lyubich's, but for completeness we give a proof here. There exists $\epsilon_0>0$ with the following property due to the beau bounds for $\widehat{\mathcal I}$. For every real-symmetric polynomial $P_{\bf b} \in \widehat{\mathcal I}$, we have $\mathrm{mod} \mathcal R^k P_{\bf b} \ge \epsilon_0$. Given $\underline{\mathcal M}=(\mathcal M_k)_{k\in \mathbb Z}$, for any $k_0<0$, there is a unique real-symmetric polynomial $P_{k_0}$ with combinatorics $(\mathcal M_k)_{k\ge k_0}$. Then for any $l\ge k_0$, set $f_{l,k_0}=\mathcal R^{l-k_0}P_{k_0}$. Clearly, $f_{l,k_0}$ is infinitely renormalizable with combinatorics $(\mathcal M_k)_{k\ge l}$ and $\mathrm{mod} f_{l,k_0}\ge \epsilon_0$. By the precompactness of $\mathcal C_{\bf N}(\epsilon_0)$, we may select a subsequence $k(j)\to -\infty$ such that $f_{l,k(j)}$ converges to some $f_l \in \mathcal C_{\bf N}(\epsilon_0)$. It follows from Lemma~\ref{cc} that $f_l$ is infinitely renormalizable with combinatorics $(\mathcal M_k)_{k\ge l}$. Using the diagonal procedure (going backwards in $l$), we ensure that $\mathcal Rf_{l-1}=f_l$. Then $f_0$ is a bi-infinitely renormalizable map with combinatorics $\underline{\mathcal M}$ and $\mod \mathcal R^k f_0 \ge \epsilon_0$ for all $k\in \mathbb Z$.\par
Let us now prove that the $f_0$ we constructed in the above paragraph is unique. If $\tilde f_0$ is another bi-infinitely renormalizable map with combinatorics $\underline{\mathcal M}$ and $\mathrm{mod} \mathcal R^k \tilde f_0 \ge \epsilon_0$ for all $k\in \mathbb Z$. By the combinatorial rigidity, $\mathcal R^k f_0$ and $\mathcal R^k \tilde f_0$ are in the same real-symmetric hybrid leaf for all $k \in \mathbb Z$. It follows from Theorem~\ref{contraction} that
\[\mathrm{d}_{\mathcal H_{{\bf b}_k}}(\mathcal R^k f_0 , \mathcal R^k \tilde f_0 )=\mathrm{d}_{\mathcal H_{{\bf b}_k}}(\mathcal R^l \mathcal R^{k-1}f_0 , \mathcal R^l\mathcal R^{k-l} \tilde f_0 )\le C\lambda^l, l \in \mathbb N,\]
and let $l\to +\infty$ we get $\mathcal R^k f_0=\mathcal R^k \tilde f_0$ for all $k\in \mathbb Z$. In particular, $f_0=\tilde f_0$.\par
Now we define $h:\Sigma^{\mathbb Z} \to \mathcal F$ such that $h(\underline{\mathcal M})=f_0$ where $f_0$ is a bi-infinitely renormalizable map with combinatorics $\underline{\mathcal M}$. Since the existence and uniqueness of $f_0$ has been proven, the map $h$ is well-defined. Let $\mathcal A:=h(\Sigma^{\mathbb Z})$, then $h:\Sigma^{\mathbb Z} \to \mathcal A$ is surjective.\par
To see the injectivity of $h$, we assume that if there exist $\underline{\mathcal M}^1\ne \underline{\mathcal M}^2$ such that $h(\underline{\mathcal M}^1)=h(\underline{\mathcal M}^2)=f$. Then by the definition of $h$, $f$ is bi-infinitely renormalizable with combinatorics $\underline{\mathcal M}^1$ and $\underline{\mathcal M}^2$. Clearly, $\mathcal M^1_k=\mathcal M^2_k$ for all $k\ge 0$. However, by the injectivity of the renormalization operator $\mathcal R$ (see \cite [Proposition 2.2]{smania2016solenoidal}), $\mathcal R^{k}f$ is a singleton for $k\in \mathbb Z_-$ and then $\mathcal M^1_k=\mathcal M^2_k$ for all $k\in \mathbb Z_-$. Thus $\underline{\mathcal M}^1= \underline{\mathcal M}^2$.\par
Finally, the continuity of $h$ follows easily from Lemma~\ref{cc} and we are done.
\end{proof}
To prove Theorem~A, it remains to prove the following lemma:
\begin{Lemma}If $f_k\to f$ in $\mathcal A$, then $h^{-1}(f_k)\to h^{-1}(f)$ in $\Sigma^{\mathbb Z}$.
\end{Lemma}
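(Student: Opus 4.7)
The plan is to exploit the fact that $\mathcal A$ is a totally $\mathcal R$-invariant precompact subset of $\mathcal I$, which places us precisely in the setting of Theorem~C. Combining Theorem~C with Lemma~\ref{continuity} (the continuity of $\mathcal R$), the iterates $\mathcal R^n|_{\mathcal A}$ are continuous for every $n\in\mathbb Z$. Thus if $f_k\to f$ in $\mathcal A$, then $\mathcal R^n f_k\to\mathcal R^n f$ in $\mathcal A$ for every $n\in\mathbb Z$.

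Since $\Sigma$ is a countable set equipped with the discrete topology, a sequence $\underline{\mathcal M}^{(k)}\to\underline{\mathcal M}$ in the product topology on $\Sigma^{\mathbb Z}$ simply means that for each fixed coordinate $n\in\mathbb Z$ we have $\mathcal M^{(k)}_n=\mathcal M_n$ for all $k$ large. Writing $h^{-1}(f_k)=(\mathcal M^{(k)}_n)_{n\in\mathbb Z}$ and $h^{-1}(f)=(\mathcal M_n)_{n\in\mathbb Z}$, the combinatoric $\mathcal M^{(k)}_n$ is by definition the renormalization combinatoric of the once renormalizable map $\mathcal R^n f_k$. So it suffices to establish the following local constancy statement: whenever $g_k\to g$ in $\mathcal I$ and all of $g_k,g$ are renormalizable with primitive combinatorics $\sigma_k,\sigma\in\Sigma$, we have $\sigma_k=\sigma$ for all $k$ large; then apply this to $g_k=\mathcal R^n f_k$, $g=\mathcal R^n f$ for each fixed $n$.

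The local constancy of combinatorics is essentially contained in the proof of Lemma~\ref{continuity}. Let $p$ be the renormalization period of $g$ and let $[-a,a]$ be its restrictive interval, so that (after symmetrisation) $a$ is a repelling periodic point of $g$ of period $p$. By continuous dependence of hyperbolic periodic orbits on $C^3$-perturbation, $g_k$ has a nearby repelling periodic point $a_k\to a$ of the \emph{same} period $p$, and $[-a_k,a_k]$ is a periodic interval of $g_k$ of some period $q_k\le p$. If along a subsequence $q_k=q<p$, then passing to the limit in $g_k^{q}([-a_k,a_k])\subset[-a_k,a_k]$ would yield a periodic interval of $g$ of period $q<p$, contradicting the minimality of $p$. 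Hence $q_k=p$ for all large $k$, so the renormalization period of $g_k$ equals $p$. The remaining combinatorial data defining $\sigma$ (the cyclic ordering of the intervals $g^i([-a,a])$, the set of those that contain a critical point, and the induced permutation $\pi$) are then read off the periodic orbit of intervals, which moves continuously with $g_k$, together with the critical set, which also moves continuously under $C^3$-convergence. Consequently $\sigma_k=\sigma$ for all $k$ large.

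The main obstacle is really concentrated in one place, and it has already been taken care of: Theorem~C, for that is what turns a purely forward-continuous semiconjugacy into a two-sided one, by giving continuity of $\mathcal R^{-n}$ for $n\ge 1$. Once both directions of iteration are continuous on $\mathcal A$, the remaining local constancy of renormalization combinatorics is a soft perturbative argument using hyperbolicity of periodic endpoints and continuity of critical points, with the only delicate point being the exclusion of a strictly smaller period in the limit, which is handled by the minimality of $p$ as above.
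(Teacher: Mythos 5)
Your proof is correct and follows essentially the same route as the paper: the paper's proof also reduces to showing $\mathcal M^k_l=\mathcal M_l$ for $|l|\le l_0$ and $k$ large, handles $l\ge 0$ by continuity of the postcritical set, and handles $l<0$ by appealing to Theorem~C (continuity of $\mathcal R^{-1}$ on the precompact totally invariant set $\mathcal A$). You make the ``local constancy of the renormalization combinatoric'' step explicit where the paper declares it clear, but the ingredients are the same, and you correctly identify that Theorem~C does all the real work.

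One small imprecision in your write-up of the local constancy step: you assert that ``$[-a_k,a_k]$ is a periodic interval of $g_k$ of some period $q_k\le p$.'' In fact $[-a_k,a_k]$ has period $p$ (since $a_k$ has period $p$); what is true, and what the paper's Lemma~3.2 actually argues, is that the \emph{renormalization period} of $g_k$ is some $q_k\le p$, realized by a possibly smaller restrictive interval $[-\widehat a_k,\widehat a_k]\subset[-a_k,a_k]$. If $q_k=q<p$ along a subsequence, one passes to the limit in $g_k^{q}([-\widehat a_k,\widehat a_k])\subset[-\widehat a_k,\widehat a_k]$; here one should also rule out the degenerate limit $\widehat a_k\to 0$, which would make $0$ a superattracting periodic point of $g$ and is excluded since $g$ is infinitely renormalizable. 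With these two repairs the contradiction with minimality of $p$ goes through exactly as you intend, so the gap is cosmetic rather than substantive.
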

\begin{proof}
Let $\underline{\mathcal M}=(\mathcal M_l)_{l\in \mathbb Z}$ and $\underline{\mathcal M}^k=(\mathcal M^k_l)_{l\in \mathbb Z}$ be the combinatorics of $f$ and $f_k$ $(k\in \mathbb N)$ respectively. It sufficies to show that for any $l_0\in \mathbb N$, $\mathcal M^k_l=\mathcal M_l$ for all $-l_0\le l \le l_0$ and  $k$ sufficiently large.\par
Since the periodic points of an infinitely renormalizable map are all repelling, the post-critical set moves continuously in a neighborhood of $f$. Thus it is clear that $\mathcal M^k_l=\mathcal M_l$ for $0\le l\le l_0$
 and $k$ large.\par
 To prove  $\mathcal M^k_l=\mathcal M_l$ for all $-l_0\le l <0$ and $k$ sufficiently large, we just need to prove $\mathcal R^{-l}f_k \to \mathcal R^{-l}f$ for $-l_0\le l<0 $.  By Theorem~C, $\mathcal R^{-l}$ is continuous and we are done.
\end{proof}

\bibliographystyle{abbrv}
\bibliography{multimodal renormalization}
\end{document}